\tikzset{curve/.style={settings={#1},to path={(\tikztostart)
    .. controls ($(\tikztostart)!\pv{pos}!(\tikztotarget)!\pv{height}!270:(\tikztotarget)$)
    and ($(\tikztostart)!1-\pv{pos}!(\tikztotarget)!\pv{height}!270:(\tikztotarget)$)
    .. (\tikztotarget)\tikztonodes}},
    settings/.code={\tikzset{quiver/.cd,#1}
        \def\pv##1{\pgfkeysvalueof{/tikz/quiver/##1}}},
    quiver/.cd,pos/.initial=0.35,height/.initial=0}
\tikzset{tail reversed/.code={\pgfsetarrowsstart{tikzcd to}}}
\tikzset{2tail/.code={\pgfsetarrowsstart{Implies[reversed]}}}
\tikzset{2tail reversed/.code={\pgfsetarrowsstart{Implies}}}
\tikzset{no body/.style={/tikz/dash pattern=on 0 off 1mm}}
\newtheorem{theorem}{Theorem}[section]
\newtheorem{lemma}[theorem]{Lemma}
\newtheorem{proposition}[theorem]{Proposition}
\newtheorem{corollary}[theorem]{Corollary}
\newtheorem{conjecture}[theorem]{Conjecture}
\newtheorem{theorema}{Theorem}
\theoremstyle{definition}
\newtheorem{definition}[theorem]{Definition}
\newtheorem{example}[theorem]{Example}
\newtheorem{remark}[theorem]{Remark}
\def\h{\mathfrak{h}}
\def\g{\mathfrak{g}}
\def\n{\mathfrak{n}}
\def\t{\mathfrak{t}}
\def\u{\mathfrak{u}}
\def\gl{\mathfrak{gl}}
\def\sl{\mathfrak{sl}}
\def\L{\mathfrak{L}}
\def\r{\mathfrak{r}}
\DeclareMathOperator{\Out}{Out}
\DeclareMathOperator{\Aut}{Aut}
\DeclareMathOperator{\HH}{HH}
\DeclareMathOperator{\Hom}{Hom}
\DeclareMathOperator{\rank}{rank}
\DeclareMathOperator{\Der}{Der}
\DeclareMathOperator{\Inn}{Inn}
\DeclareMathOperator{\MT}{MT}
\DeclareMathOperator{\mt}{mt}
\renewcommand{\Im}{\mathrm{im}}
\newcommand{\pirank}{\pi_1\!\text{-}\!\rank}
\newcommand{\mtrank}{{\rm mt}\text{-}\!\rank}
\title[Maximal Tori in $\HH^1$ and the fundamental group]{Maximal Tori in $\HH^1$ and the fundamental group}
\author{Benjamin Briggs}
\address{
University of Utah, Department of Mathematics,
Salt Lake City, Utah 84112}
\email{briggs@math.utah.edu}
\author{Lleonard Rubio  y Degrassi}
\address{Dipartimento di Informatica - Settore di Matematica, Universit\`{a} degli Studi di Verona, Strada le Grazie 15 - Ca’ Vignal, I-37134 Verona, Italy}
\email{lleonard.rubioydegrassi@univr.it}
\thanks{\noindent
{\it Funding.}
The second author has been  supported by the Fundaci\'on ‘S\'eneca’ of Murcia [19880/GERM/15]; the INdAM postdoctoral research grant 2019-2020; and by the project PRIN 2017 - Categories, Algebras: Ring-Theoretical and Homological Approaches.\\
{\it Acknowledgements.}
The first author would like thank Anne Dranowski for explaining some 
facts about the Lie theory of algebraic groups, and Vincent G\'elinas for making some important suggestions. The second author would like to thank Alexandra Zvonareva for explaining some facts about the outer automorphism group of Brauer graph algebras. We are thankful to Markus Linckelmann for 
many helpful correspondences. We also would like to thank Hao Chang for pointing out a mistake in Example 4.5. Lastly, we thank Manuel Saor\'in for 
his support and for pointing out the map from the dual of the 
fundamental group to the first Hochschild cohomology.
}
\begin{document}

\subjclass[2010]{16E40, 16D90 (Primary) 17B50, 13D03, 18D50 (Secondary)}
\keywords{Hochschild cohomology, invariance, fundamental group, monomial algebras, }

\begin{abstract}
We investigate maximal tori in the Hochschild cohomology Lie algebra $\HH^1(A)$ of a finite dimensional algebra $A$, and their connection with the fundamental groups associated to presentations of $A$.  We prove that every maximal torus in $\HH^1(A)$ arises as the dual of some fundamental group of $A$, extending work of Farkas, Green and Marcos; de  la  Pe\~na  and Saor\'in; and Le Meur.  Combining this with known invariance results for Hochschild cohomology, we deduce that (in rough terms) the largest rank of a fundamental group of $A$ is a derived invariant quantity, and among self-injective algebras, an invariant under stable equivalences of Morita type.  Using this we prove that there are only finitely many monomial algebras in any derived equivalence class of finite dimensional algebras; hitherto this was known only for very restricted classes of monomial algebras.
\end{abstract}

\maketitle

\section*{Introduction}

The first Hochschild cohomology Lie algebra $\HH^1(A)$, along with all of the Lie theoretic invariants which may derived from it, is an important object attached to any finite dimensional algebra $A$, 
and this structure has been studied intensely in recent years; see for example \cite{ALS,BKL,CSS,ER,LR1,RSS,Stra2,Taillefer}. In particular, authors have been interested in properties such as solvability and nilpotence of $\HH^1(A)$, for specific classes of algebras. The notion of a maximal torus plays a fundamental role in the theory of Lie algebras---for example, they are a key ingredient in the classification of semi-simple Lie algebras in characteristic zero. In this work we improve our understanding of the Lie structure of $\HH^1(A)$ by classifying its maximal tori in terms of presentations of $A$.

Classically, the bracket on $\HH^1(A)$ was used to understand the deformation theory of $A$. Within representation theory, since this Lie algebra is a derived invariant of $A$---and, among self-injective algebras, an invariant under stable equivalences of Morita type---its structure is often used in classification problems. More recently, by studying the closely related algebraic group of outer automorphisms of $A$, maximal tori have been used to impressive effect to obtain complete combinatorial derived invariants for gentle algebras \cite{AG} and Brauer graph algebras \cite{AZ}, and they have been used in modular representation theory of blocks \cite{Kessar}. Beyond this, however, fine Lie theoretic properties of $\HH^1(A)$ are not often used in reverse, to help understand the original algebra $A$. This work contains a number of results in this direction. In particular, we show that the structure of $\HH^1(A)$ can be used to deduce information about the shape of the Gabriel quiver of $A$ (see Theorem \ref{ta_betti}).

The second main pillar of this paper concerns the homotopy theory of bound quivers. The fundamental group $\pi_1(Q,I)$ of a bound quiver $(Q,I)$ was introduced in \cite{MP} using the covering techniques developed in \cite{BG,Rie}. Starting with an algebra $A$, one obtains different groups $\pi_1(Q,I)$ if one varies the presentation $A\cong kQ/I$. While this has previously been considered a defect of the theory, our perspective is that one should consider the moduli of all fundamental groups, much as one considers the moduli of all tori in a Lie algebra. Indeed, our results show that tori in $\HH^1(A)$ are intimately linked with the fundamental groups of $A$.

A diagonalizable subalgebra of $\HH^1(A)$ is by definition a subspace generated by derivations which act simultaneously diagonalizably on $A$. These have been studied in \cite{DS,FGM,Le}, and it is now well-known that they are connected with the spaces $\Hom(\pi_1(Q,I),k^+)$ of additive characters on the fundamental groups  
of $A$. The next result was proven by Farkas, Green and Marcos under certain technical assumptions \cite[Theorem 3.2]{FGM}, and later proven for triangular algebras by Le Meur  \cite[Proposition 2.6]{Le}; our result applies to all finite dimensional algebras.

\begin{theorema}
\label{main1}
Let $A$ be a finite dimensional algebra over a field $k$. Every maximal diagonalizable subalgebra of $\HH^1(A)$ is isomorphic to $\Hom(\pi_1(Q,I),k^+)$ for some 
presentation $A\cong kQ/I$.
\end{theorema}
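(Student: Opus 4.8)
The plan is to realise the given maximal diagonalizable subalgebra $\mathfrak d\subseteq\HH^1(A)$ as the image of the canonical map $\Phi\colon\Hom(\pi_1(Q,I),k^+)\to\HH^1(A)$ for a carefully chosen presentation, and then to force $\Phi$ to be injective. We may assume $A$ is basic, so that it admits bound quiver presentations, and because $\HH^1$ together with its family of diagonalizable subalgebras is Morita invariant. By definition $\mathfrak d$ is spanned by the classes of finitely many simultaneously diagonalizable derivations of $A$; such operators pairwise commute, so after trimming the spanning set we obtain a subspace $D\subseteq\Der(A)$ of pairwise commuting, simultaneously diagonalizable derivations with $D\cap\Inn(A)=0$ and $D\xrightarrow{\ \sim\ }\mathfrak d$. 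The common eigenspace decomposition of $A$ is then a grading $A=\bigoplus_{\lambda\in\Lambda}A_\lambda$, where $\Lambda$ is the finitely generated subgroup of $(D^{*},+)$ generated by the weights that appear; equivalently $A$ carries an action of the diagonalizable group scheme $\mathbb D(\Lambda)=\operatorname{Spec}k[\Lambda]$ by algebra automorphisms.

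The crucial step is to find a presentation $A\cong kQ/I$ that is \emph{compatible} with this grading: the vertex idempotents homogeneous (hence of degree $0$) and the arrows homogeneous elements of $\operatorname{rad}A$. For such a presentation $kQ\to A$ is a graded surjection, $I$ is a homogeneous admissible ideal, and the grading of $A$ is exactly the one induced by the weight function $W\colon Q_1\to\Lambda$. To produce it, note that $\operatorname{rad}A$ is stable under every derivation, so $\mathbb D(\Lambda)$ acts compatibly on $A/\operatorname{rad}A$ and on $\operatorname{rad}A/\operatorname{rad}^2 A$; using that $\mathbb D(\Lambda)$ is linearly reductive one chooses a complete set of primitive orthogonal idempotents of $A/\operatorname{rad}A$ that is $\mathbb D(\Lambda)$-stable and lies in a homogeneous position inside each matrix block, lifts it equivariantly through the nilpotent ideal $\operatorname{rad}A$ to homogeneous degree-$0$ idempotents of $A$, and then lifts a homogeneous basis of each $e_i(\operatorname{rad}A/\operatorname{rad}^2 A)e_j$ to homogeneous arrows in $e_i(\operatorname{rad}A)e_j$. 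I expect this to be the main obstacle: it is where characteristic $p$ phenomena and possibly non-split blocks of $A/\operatorname{rad}A$ must be controlled, and although the cleanest formulation is through linearly reductive group scheme actions and equivariant idempotent lifting, one may prefer to isolate it as a separate lemma asserting that a torus can be conjugated to be compatible with a presentation.

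Granted such a presentation, homogeneity of $I$ forces every minimal relation to be $W$-homogeneous, so $W$ is constant on homotopy classes and descends to a homomorphism $\overline W\colon\pi_1(Q,I)\to(D^{*},+)$. Pairing with $d\in D$ gives an additive character $\langle\overline W(-),d\rangle\colon\pi_1(Q,I)\to k^{+}$, and unwinding the construction of $\Phi$ (a character goes to the class of its associated weight derivation) shows $\Phi\big(\langle\overline W(-),d\rangle\big)=[d]$; hence $d\mapsto\langle\overline W(-),d\rangle$ is a linear section of $\Phi$ whose image maps isomorphically onto $\mathfrak d$, so $\mathfrak d\subseteq\Im(\Phi)$. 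On the other hand $\Im(\Phi)$ is itself a diagonalizable subalgebra of $\HH^1(A)$: fixing a spanning tree of $Q$, the weight derivations representing the various characters all commute and are simultaneously diagonal for the grading of $A$ by $\pi_1(Q,I)^{\mathrm{ab}}$. Maximality of $\mathfrak d$ therefore gives $\Im(\Phi)=\mathfrak d$. It remains to arrange that $\Phi$ is injective. A nonzero element of $\ker\Phi$ does not lie in the section's image, so it does not factor through $\overline W$; feeding this into the relation-modification techniques of \cite{FGM,Le,DS} yields a presentation whose fundamental group is a proper quotient of $\pi_1(Q,I)$, still satisfying $D\subseteq\Im(\Phi)$ and hence still with image $\mathfrak d$, but with $\Hom(\pi_1,k^{+})$ of strictly smaller (finite) dimension. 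Iterating terminates at a presentation for which $\Phi$ restricts to an isomorphism onto $\mathfrak d$, giving $\mathfrak d\cong\Hom(\pi_1(Q,I),k^{+})$ as desired.
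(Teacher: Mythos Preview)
Your argument contains a genuine gap at the step where you construct the compatible presentation: you assert that ``$\operatorname{rad}A$ is stable under every derivation,'' and your equivariant idempotent-lifting procedure depends on this so that the arrows you choose land in $\operatorname{rad}A$ and the resulting ideal $I$ is admissible. But this stability fails in positive characteristic, even for diagonalizable derivations. The paper's Example~\ref{e_JW} is already a counterexample: for $A=k[x]/(x^p)$ in characteristic $p$ and a generator $f=a_0+a_1x+\cdots$ with $a_0\neq 0$, the derivation $f\partial_f$ is toral (hence diagonalizable) but sends $x$ to $f(f')^{-1}$, which has nonzero constant term and so lies outside $\operatorname{rad}A$. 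Consequently your grading need not respect the radical filtration, and the admissible presentation you describe need not exist. This is precisely the phenomenon flagged in Remark~\ref{r_admisschar}, and it is why the paper is forced to allow \emph{non-admissible} presentations throughout.

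The paper circumvents the problem differently. Rather than lifting idempotents equivariantly through the radical, it first lifts the maximal diagonalizable subalgebra of $\HH^1(A)$ to one in $\Der_0(A)$ using the torus-lifting facts of Section~\ref{Maximal_tori} (Proposition~\ref{p_char0torussurjection} in characteristic zero, Proposition~\ref{t_charpsurjection} in positive characteristic), and then in Lemma~\ref{l_diag_pres} builds the presentation directly from a simultaneous eigenbasis, taking arrows to be eigenvectors that descend to a basis of $A/\operatorname{rad}(A)^2$---with no requirement that they lie in $\operatorname{rad}A$. The resulting presentation may be non-admissible but is still minimal, and the explicit ``signed trace'' construction in Theorem~\ref{maxdiag} then exhibits every diagonal derivation as $\theta_{\nu,W}(f)$ modulo an inner derivation. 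Finally, injectivity of $\theta_\nu$ for minimal presentations is a one-line citation to \cite{FGM,DS}; your iterative presentation-modification scheme to kill $\ker\Phi$ is unnecessary.
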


The isomorphisms appearing in the theorem are realized by  concretely defined maps $\Hom(\pi_1(Q,I),k^+)\to \HH^1(A)$; for triangular algebras this map was introduced in \cite{AP}, and it was generalized to arbitrary finite dimensional algebras in \cite{DS}.

At this point we involve some Lie theory, proving that the maximal diagonal subalgebras of $\HH^1(A)$ are exactly the maximal tori of $\HH^1(A)$. With this in mind, we define $\mtrank(\HH^1(A))$ to be the maximal dimension of a torus in $\HH^1(A)$, and we define $\pirank(A)$ to be the maximal dimension of a dual fundamental group $\Hom(\pi_1(Q,I),k^+)$ for some (minimal) presentation of $A$. We immediately deduce the next result.

\begin{theorema}
\label{thm2}
For any finite dimensional $k$-algebra $A$ we have 
\[
\mtrank(\HH^1(A))=\pirank(A).
\]
\end{theorema}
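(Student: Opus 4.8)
The plan is to obtain the identity formally from Theorem~\ref{main1} together with the preceding identification of the maximal diagonalizable subalgebras of $\HH^1(A)$ with its maximal tori; beyond these two inputs nothing is needed except a careful count of dimensions. Write $\kappa_{Q,I}\colon\Hom(\pi_1(Q,I),k^+)\to\HH^1(A)$ for the comparison map of \cite{AP,DS} attached to a presentation $A\cong kQ/I$, and recall the two standing facts that its image is a diagonalizable subalgebra of $\HH^1(A)$ and that $\kappa_{Q,I}$ is injective when the presentation is minimal. Combined with the Lie-theoretic statement, this means that $\mtrank(\HH^1(A))$ is the largest dimension of a maximal diagonalizable subalgebra of $\HH^1(A)$, whereas $\pirank(A)$ is the largest value of $\dim_k\Hom(\pi_1(Q,I),k^+)=\dim_k\kappa_{Q,I}\bigl(\Hom(\pi_1(Q,I),k^+)\bigr)$ taken over minimal presentations of $A$.

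For ``$\leq$'', choose a torus $T\subseteq\HH^1(A)$ with $\dim_k T=\mtrank(\HH^1(A))$. Being a maximal torus, $T$ is a maximal diagonalizable subalgebra, so Theorem~\ref{main1} provides a (minimal) presentation $A\cong kQ/I$ and an isomorphism $\Hom(\pi_1(Q,I),k^+)\xrightarrow{\ \sim\ }T$; hence $\pirank(A)\geq\dim_k\Hom(\pi_1(Q,I),k^+)=\dim_k T=\mtrank(\HH^1(A))$. For ``$\geq$'', pick a minimal presentation $A\cong kQ'/I'$ achieving $\dim_k\Hom(\pi_1(Q',I'),k^+)=\pirank(A)$. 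Then $D:=\kappa_{Q',I'}\bigl(\Hom(\pi_1(Q',I'),k^+)\bigr)$ is a diagonalizable subalgebra of $\HH^1(A)$ with $\dim_k D=\pirank(A)$ by injectivity, and $D$ is contained in some maximal diagonalizable subalgebra of $\HH^1(A)$, that is, in a maximal torus $T'$; hence $\mtrank(\HH^1(A))\geq\dim_k T'\geq\dim_k D=\pirank(A)$. The two inequalities give the asserted equality.

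Essentially all the work has already been done in proving Theorem~\ref{main1} and the equivalence ``maximal diagonalizable subalgebra $=$ maximal torus'', so here there is no real obstacle; the only point deserving attention is the compatibility of both notions of rank with the restriction to minimal presentations. Concretely, I would be careful to invoke Theorem~\ref{main1} in the sharpened form that the presentation realizing a given maximal diagonalizable subalgebra may be taken minimal --- equivalently, that every presentation of $A$ is dominated by a minimal one whose dual fundamental group has at least as large a dimension --- since it is exactly this that licenses passing back and forth between a maximal torus of $\HH^1(A)$ and a term of the supremum defining $\pirank(A)$.
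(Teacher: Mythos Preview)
Your proposal is correct and follows essentially the same route as the paper: both arguments combine Theorem~\ref{main1} (in the form of Theorem~\ref{maxdiag}) with the injectivity of the comparison map $\theta_\nu$ for minimal presentations, and both flag the passage from arbitrary to minimal presentations as the one point requiring care. The paper handles that point with the remark, just before Definition~\ref{pirank}, that removing redundant generators can only enlarge $\t_\nu^{\HH}$---exactly the domination statement you articulate in your final paragraph.
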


The \emph{Lie-theoretic} quantity on the left-hand-side is a derived invariant of $A$, and so we deduce from this theorem that the \emph{combinatorial} quantity on the right-hand-side is a derived invariant of $A$. Similarly, among self-injective algebras, we deduce that $\pirank(A)$ is an invariant under stable equivalences of Morita type. As a first application of this idea we bound the maximal toral rank of $\HH^1(A)$ in terms of the first Betti number of the Gabriel quiver of $A$ (that is, the number of holes in the underlying graph). 

\newpage 
\begin{theorema}
\label{ta_betti}
If $A$ is a finite dimensional algebra over an algebraically closed field $k$, with Gabriel quiver $Q$, then 
\[
\mtrank(\HH^1(A))\leq \beta_1(Q).
\]
If $A$ is a monomial (or semimonomial) algebra then equality holds.
\end{theorema}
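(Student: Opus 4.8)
\emph{Plan.} Everything will be deduced from Theorem~\ref{thm2}: since $\mtrank(\HH^1(A))=\pirank(A)$, it suffices to prove that $\pirank(A)\le\beta_1(Q)$ for every finite dimensional $A$, with equality when $A$ is monomial or semimonomial. As $\HH^1$ is a Morita invariant I may assume $A=kQ/I$ is basic, and as both $\HH^1$ and $\beta_1$ are additive over connected components I may assume $Q$ is connected. Because $k$ is algebraically closed and a minimal presentation $A\cong kQ'/I'$ has $I'$ inside the square of the arrow ideal, the underlying quiver $Q'$ is forced to be the Gabriel quiver $Q$; thus in $\pirank(A)=\max_{I'}\dim_k\Hom(\pi_1(Q,I'),k^+)$ only the admissible ideal varies, while $\beta_1(Q)$ is fixed.

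\emph{The upper bound.} The structural point is that $\pi_1(Q,I')$ is, by construction, a quotient of the fundamental group $\pi_1(Q)$ of the underlying graph of $Q$ regarded as a one-dimensional CW complex: enlarging the relation ideal only adds homotopy identifications. A finite connected graph is homotopy equivalent to a wedge of $\beta_1(Q)$ circles, so $\pi_1(Q)$ is a free group of rank $\beta_1(Q)$. Since $\Hom(-,k^+)$ sends a surjection of groups to an injection of $k$-vector spaces,
\[
\Hom(\pi_1(Q,I'),k^+)\hookrightarrow\Hom(\pi_1(Q),k^+)\cong(k^+)^{\beta_1(Q)},
\]
which has $k$-dimension $\beta_1(Q)$. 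Maximising over $I'$ yields $\pirank(A)\le\beta_1(Q)$, and hence $\mtrank(\HH^1(A))\le\beta_1(Q)$.

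\emph{Equality for (semi)monomial algebras.} Here it is enough to produce one minimal presentation attaining the bound. Choose a presentation $A\cong kQ'/I$ with $I$ generated by monomials (resp.\ by the relations allowed for a semimonomial algebra); after deleting any arrows occurring among these generators we may assume $I$ lies in the square of the arrow ideal, so the presentation is minimal with $Q'=Q$. In the monomial case a minimal set of relations consists of paths, each having a single path in its support, so the homotopy relation of $(Q,I)$ is trivial and $\pi_1(Q,I)=\pi_1(Q)$ is free of rank $\beta_1(Q)$; in the semimonomial case the remaining relations identify parallel paths built from the same arrows, which becomes trivial after abelianising. In either case $\pi_1(Q,I)^{\mathrm{ab}}\cong\mathbb Z^{\beta_1(Q)}$, so $\Hom(\pi_1(Q,I),k^+)\cong(k^+)^{\beta_1(Q)}$ has dimension $\beta_1(Q)$; combined with the upper bound this forces equality. (The corresponding torus can also be written down directly: for monomial $A$ every weight function $w\colon Q_1\to k$ makes $I$ homogeneous and thus defines a semisimple derivation $d_w$ of $A$, and modulo the inner derivations coming from the vertex idempotents the $d_w$ span a torus of dimension $|Q_1|-|Q_0|+1=\beta_1(Q)$.)

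\emph{Main obstacle.} The substantive work---the identification of maximal tori with dual fundamental groups---is already contained in Theorem~\ref{thm2}, so what remains is combinatorial. The one genuinely delicate point is that monomial relations impose \emph{no} homotopy identifications, which is exactly what makes the general upper bound attained; this rests on the standard description of the minimal relations of a monomial ideal. Everything else is bookkeeping: arranging the monomial presentation to be minimal with underlying quiver exactly $Q$, reducing the disconnected case to the connected one, and---for semimonomial algebras---verifying that the permitted relations produce only homotopy identifications that vanish in the abelianization of $\pi_1(Q)$.
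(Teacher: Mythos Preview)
Your proof is correct and follows essentially the same route as the paper: reduce to the basic connected case, invoke Theorem~\ref{thm2} (which is Corollary~\ref{derinv} in the body), and then establish the combinatorial bound $\dim_k\pi_1(Q,I')^\vee\le\beta_1(Q)$ with equality for (semi)monomial ideals---this last step is exactly the content of Lemma~\ref{ineq}, which you have reproved inline. One small inaccuracy: your claim that a minimal presentation has $I'$ contained in the square of the arrow ideal is false in positive characteristic (see Example~\ref{e_JW}, where $I_f=(u^p-a_0^p)$ with $a_0\neq 0$ is minimal but not admissible); nonetheless the conclusion $Q'\cong Q$ that you need still holds for basic algebras, as the paper notes just before Definition~\ref{pirank}.
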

We note that lower bounds on the toral rank come from exhibiting tori, while upper bounds are in general more difficult to come by. The fact that equality holds for monomial algebras has a striking application.

\begin{theorema}
\label{Main2}
If two finite dimensional monomial algebras (over any field) are derived equivalent, then their Gabriel quivers  contain the same number of arrows. Consequently, there are only finitely many monomial algebras in any given derived equivalence class.
\end{theorema}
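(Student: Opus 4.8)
The plan is to first show that the number of arrows of the Gabriel quiver is a derived invariant of a finite-dimensional monomial algebra, and then to upgrade this to the finiteness statement by a counting argument, using the derived invariance of $\dim_k Z(A)$ to control the size of the algebra. Throughout write $Q_A$ for the Gabriel quiver of $A$, with $n_A$ vertices, $a_A$ arrows and $c_A$ connected components, so that $\beta_1(Q_A)=a_A-n_A+c_A$.

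First I would assemble the derived invariants that enter. The number of simple modules $n_A=\rank K_0(A)$ is a derived invariant; by the discussion following Theorem~\ref{thm2}, so is $\mtrank(\HH^1(A))$; and since $\HH^0(A)\cong Z(A)$ is a derived invariant $k$-algebra, so are $\dim_k Z(A)$ and the number of primitive idempotents of $Z(A)$. Next I would match these with combinatorics of $Q_A$. The indecomposable ring factors of $A$ correspond both to the primitive idempotents of $Z(A)$ and to the connected components of $Q_A$: if $Q_A=Q'\sqcup Q''$ with no arrows between the parts and $e',e''$ are the corresponding sums of vertex idempotents, then $e'Ae''=e''Ae'=0$ and hence $A=e'Ae'\times e''Ae''$, and conversely a ring decomposition splits $Q_A$; thus $c_A$ is a derived invariant. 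For $\beta_1(Q_A)$ I would invoke Theorem~\ref{ta_betti}, which gives $\mtrank(\HH^1(A))=\beta_1(Q_A)$ for monomial $A$ over an algebraically closed field; to remove the hypothesis on the field I would base change to $\overline{k}$, noting that a monomial presentation $kQ/I$ base changes to the admissible presentation $\overline{k}Q/\overline{k}I$, so $\overline{k}\otimes_k A$ is again monomial with the same Gabriel quiver $Q_A$, and that a derived equivalence $A\simeq B$ induces one $\overline{k}\otimes_k A\simeq\overline{k}\otimes_k B$. Putting these together, $\beta_1(Q_A)$ is a derived invariant among monomial algebras, hence so is $a_A=\beta_1(Q_A)+n_A-c_A$; this is the first assertion.

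For the finiteness statement, fix a derived equivalence class $\mathcal D$ containing a monomial algebra. By the first part every monomial algebra in $\mathcal D$ has Gabriel quiver with the same number of vertices $n$ and the same number of arrows $a$, and up to isomorphism there are only finitely many quivers $Q$ with these data. For a fixed such $Q$, a finite-dimensional monomial quotient $kQ/I$ is completely determined by the finite set of paths of $Q$ not lying in $I$ (these form a basis of $kQ/I$), so once $\dim_k(kQ/I)$ is bounded there remain only finitely many possibilities. I would obtain the bound from the one remaining invariant, $\dim_k Z(kQ/I)$, which is constant on $\mathcal D$, via the following claim: for a fixed finite quiver $Q$, the dimension of a finite-dimensional monomial algebra $kQ/I$ is bounded above by a function of $|Q_0|$, $|Q_1|$ and $\dim_k Z(kQ/I)$. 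Granting this, $\dim_k(kQ/I)$ is uniformly bounded on $\mathcal D$, so $\mathcal D$ contains only finitely many monomial algebras up to isomorphism.

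The main obstacle is this last claim, that the dimension of a monomial algebra over a fixed quiver is controlled by the quiver together with the dimension of its center. The mechanism I would use is that if $\dim_k(kQ/I)$ is large relative to $|Q_0|$ and $|Q_1|$ then $Q$ has a long path $p\notin I$; since $Q$ is finite, $p$ must revisit some vertex many times, producing a long chain of nested cyclic subpaths $p_1,\ p_1p_2,\ p_1p_2p_3,\dots$ none of which lie in $I$; symmetrizing each of these cyclic paths over its rotations then yields a correspondingly long list of linearly independent central elements of $kQ/I$, forcing $\dim_k Z(kQ/I)$ to be large as well. Making the symmetrization precise, checking linear independence, and extracting explicit functions so that the counting closes up is the only genuinely technical step; everything else is bookkeeping with derived invariants and with the elementary combinatorics of $\beta_1$.
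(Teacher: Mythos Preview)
Your argument for the first assertion --- that the number of arrows is a derived invariant among monomial algebras --- is correct and matches the paper's proof: both reduce to the algebraically closed case, use Theorem~\ref{ta_betti} to identify $\beta_1(Q_A)$ with $\mtrank(\HH^1(A))$, and then recover $a_A$ from $\beta_1(Q_A)$, $n_A$, and $c_A$.

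The finiteness argument, however, has a genuine gap: the key claim that $\dim_k(kQ/I)$ is bounded by a function of $|Q_0|$, $|Q_1|$ and $\dim_k Z(kQ/I)$ is false. Take the quiver $Q$ with vertices $1,2$, a loop $\alpha$ at $1$, and an arrow $\beta\colon 1\to 2$, and set $A_N=kQ/(\alpha^N)$. Then $\dim_k A_N=2N+1$, but $Z(A_N)=k\cdot 1$ for every $N$: the only candidates for nontrivial central elements are the powers $\alpha^i$, yet $\beta\alpha^i\neq 0$ while $\alpha^i\beta=0$, so none of them are central. Your proposed mechanism breaks exactly here: symmetrizing a cycle over its rotations does \emph{not} in general produce a central element, because the result can fail to commute with arrows entering or leaving the cycle. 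This is not a technicality to be cleaned up; the invariant $\dim_k Z(A)$ simply does not control $\dim_k A$ on a fixed quiver.

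The paper bounds the size of the algebra with a different derived invariant: the trace $\sum_i\dim_k e_iAe_i$ of the Cartan matrix (derived invariant because the similarity class of the Cartan matrix is). If a path of length $L$ survives in a monomial algebra on a quiver with $n$ vertices, then some vertex is visited at least $\lceil (L+1)/n\rceil$ times along it, and the subpaths between visits give that many distinct surviving closed paths at that vertex; hence $L< n\cdot\sum_i\dim_k e_iAe_i$. In the family $A_N$ above this trace is $N+1$, which does grow, so the paper's bound bites where yours does not. Replacing $\dim_k Z(A)$ by the Cartan trace repairs your argument with essentially no other changes.
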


It was proven by Avella-Alaminos and Geiss that among gentle algebras the number of arrows is a derived invariant \cite[Proposition B]{AG}. 
We are not aware of any other large classes of monomial algebras for which Theorem \ref{Main2} was known, beyond the obvious cases.

Theorem \ref{ta_betti} also implies (and extends to semimonomial algebras) a result of Bardzell and Marcos: if $A = kQ/I$ is a finite dimensional monomial algebra for which $\HH^1(A) = 0$, then $Q$ is a tree {\cite[Theorem 2.2]{BM}}.

The semimonomial algebras appearing in Theorem \ref{thm2} are defined in Section \ref{s_pi1}. Examples occur frequently, including all commutative monomial algebras and all quantum complete intersections (see Subsection \ref{s_semimon}). Many authors have studied the Hochschild cohomology Lie algebra of quantum complete intersections, and calculations are notoriously complicated, with the structure depending finely on the quantum parameters and exponents. It is therefore surprising that we are able to prove in all cases that $\mtrank(\HH^1(A_q))$ is equal to the number of variables generating the quantum complete intersection $A_q$ (see Corollary \ref{qci}). The picture in the modular representation theory is very similar: we prove that if $k$ is a field of characteristic $p$ and $G$ is a $p$-group, then $\mtrank(\HH^1(kG))$ is equal to minimal number of generators for $G$ (see Proposition \ref{p_pgroup}). 

Another well-studied class of algebras we apply our results to are the simply connected algebras, that is, triangular algebras for which the fundamental group of any presentation is trivial.

\begin{theorema}
\label{t_simplycon}
If $A$ is a simply connected finite dimensional algebra over an algebraically closed field, then $\HH^1(A)$ is nilpotent.
\end{theorema}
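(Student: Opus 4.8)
\noindent\emph{Proof proposal.}
The plan is to deduce from Theorem~\ref{main1} that $\HH^1(A)$ contains no nonzero diagonalizable subalgebra, and then to upgrade this to nilpotency using the Jordan decomposition available on $\Der(A)$ together with Engel's theorem.

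First I would note that since $A$ is simply connected it is in particular triangular, and by definition $\pi_1(Q,I)$ is trivial for \emph{every} presentation $A\cong kQ/I$. Theorem~\ref{main1} then gives that each maximal diagonalizable subalgebra of $\HH^1(A)$ is isomorphic to $\Hom(\pi_1(Q,I),k^+)=\Hom(1,k^+)=0$; since any diagonalizable subalgebra lies inside a maximal one, $\HH^1(A)$ has no nonzero diagonalizable subalgebra at all. Equivalently, $\mtrank(\HH^1(A))=\pirank(A)=0$ by Theorem~\ref{thm2}.

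The second step uses that $\Der(A)=\mathrm{Lie}(\Aut(A))$ is the Lie algebra of a linear algebraic group, hence is stable under Jordan decomposition inside $\mathfrak{gl}(A)$: any $\delta\in\Der(A)$ splits as $\delta=\delta_s+\delta_n$ with $\delta_s,\delta_n\in\Der(A)$, $[\delta_s,\delta_n]=0$, and $\delta_s$ semisimple, $\delta_n$ nilpotent as operators on $A$. Because $\Inn(A)=\{\mathrm{ad}_a:a\in A\}$ is an ideal of $\Der(A)$ stable under $\mathrm{ad}_{\delta_s}$ and $\mathrm{ad}_{\delta_n}$ (using $[\delta_s,\mathrm{ad}_a]=\mathrm{ad}_{\delta_s(a)}$, etc.), this decomposition passes to the quotient: $\mathrm{ad}_{[\delta]}=\mathrm{ad}_{[\delta_s]}+\mathrm{ad}_{[\delta_n]}$ is the Jordan decomposition of $\mathrm{ad}_{[\delta]}$ acting on $\HH^1(A)$. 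Now if $\HH^1(A)$ were not nilpotent, Engel's theorem would supply a class $[\delta]$ that is not ad-nilpotent, forcing $\mathrm{ad}_{[\delta_s]}\neq 0$ and hence $[\delta_s]\neq 0$; but $\delta_s$, being semisimple over an algebraically closed field, acts diagonalizably on $A$, so $k\cdot[\delta_s]$ is a nonzero diagonalizable subalgebra of $\HH^1(A)$. This contradicts the first step, so $\HH^1(A)$ is nilpotent.

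The delicate point is the second step: one must check carefully that the abstract Jordan decomposition of a derivation of a finite-dimensional algebra is genuinely available in arbitrary characteristic (it comes from the Jordan decomposition in $\Aut(A)$ and its Lie algebra), that it descends to $\HH^1(A)$ with $\mathrm{ad}_{[\delta_s]}$ still semisimple, and that the one-dimensional subalgebra $k\cdot[\delta_s]$ it produces really is a ``diagonalizable subalgebra'' in the sense of Theorem~\ref{main1}. The same argument can be phrased group-theoretically: the connected algebraic group $\Out^0(A)$ has trivial maximal torus by the first step, hence is unipotent (its reductive quotient is a reductive group of rank zero, so trivial), and the Lie algebra of a unipotent group is nilpotent; that route instead requires the identification $\HH^1(A)\cong\mathrm{Lie}(\Out^0(A))$ for triangular $A$.
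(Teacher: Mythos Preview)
Your argument is essentially correct, but there is one slip in the first step and the overall route differs from the paper's in a way worth noting.

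\medskip
\noindent\textbf{The slip.} Simply connected means $\pi_1(Q,I)=1$ for every \emph{admissible} presentation, not for every presentation; Theorem~\ref{main1} may a priori produce a non-admissible one. The paper closes this gap by observing (see Remark~\ref{r_admisschar} and the discussion opening the simply connected subsection) that for triangular algebras all derivations preserve the radical, so the presentation in Theorem~\ref{main1} can be taken admissible. You should insert this remark rather than assert triviality ``for every presentation''.

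\medskip
\noindent\textbf{Comparison with the paper.} The paper argues characteristic by characteristic. In characteristic zero it passes through the algebraic group: $\mtrank(\HH^1(A))=0$ forces $\Out(A)^\circ$ to be unipotent, hence nilpotent, hence $\HH^1(A)=\L(\Out(A))$ is nilpotent. In positive characteristic it uses the restricted structure: no tori means no semisimple elements, so by Jordan--Chevalley--Seligman every element is $p$-nilpotent, and then Engel gives nilpotency. Your main argument is a uniform Jordan-decomposition-plus-Engel proof at the level of $\Der(A)$: you lift to $\Der(A)$, split $\delta=\delta_s+\delta_n$ there, and push down. This is legitimate in all characteristics because $\Der(A)=\L(\Aut(A))$ is the Lie algebra of a linear algebraic group, so the semisimple and nilpotent parts of a derivation are again derivations; and your check that $\mathrm{ad}_{[\delta_s]}$ stays semisimple on the quotient $\HH^1(A)$ is correct. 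In characteristic $p$ your argument and the paper's are essentially the same, just phrased at different levels (you in $\Der(A)$, the paper in $\HH^1(A)$ via the restricted structure). In characteristic zero your route is more elementary, avoiding the unipotent-group detour; the group-theoretic alternative you sketch at the end is exactly the paper's characteristic-zero proof.
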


One new insight in this paper in that we must consider non-admissible presentations to prove Theorem \ref{main1}. When working in positive characteristic these presentations are unavoidable because of the existence of derivations which do not preserve the radical (see Example \ref{e_JW}). Thus, our setup is necessarily more general than previous works on the fundamental group; this has the benefit that we allow all finite dimensional algebras directly, including non-basic algebras (see Example \ref{e_group}).

\subsection*{Outline} 
In Section \ref{s_pi1} we recall the definition of the fundamental group of a bound quiver. In Section \ref{s_HHLie} we discuss diagonalizable derivations and diagonalizable Lie algebras of $\HH^1(A)$. Section \ref{Maximal_tori} summarises the Lie theory needed to discuss maximal tori in $\HH^1(A)$. This section divides according to the characteristic of $k$: in characteristic zero we must consider algebraic groups, while in postive characteristic we must consider restricted Lie algebras. In Section \ref{s_toripi} we prove Theorems \ref{main1} and \ref{thm2} characterising maximal tori in terms of the fundamental group. In Section \ref{s_examples} we apply our results to various families of finite dimensional algebras including monomial algebras, commutative algebras, quantum complete intersections, simply connected algebras, and Kronecker chains. Here we prove Theorems \ref{ta_betti}, \ref{Main2} and \ref{t_simplycon}, and recover some known theorems along the way.

\section{The fundamental group of a bound quiver}\label{s_pi1}

Let $A$ be a finite dimensional algebra over an algebraically closed field $k$.  

A quiver is a directed graph $Q$ with a set $Q_0$ of vertices and a set $Q_1$ of arrows. A presentation is a surjective $k$-algebra homomorphism $\nu\colon kQ\to A$ from the path algebra of a finite quiver $Q$. We will fix a complete set $e_1,...,e_n$ of orthogonal idempotents for $A$ 
and assume all of our presentations induce a bijection between $Q_0$ and $\{e_1,...,e_n\}$.  We do \emph{not} assume that $I=\ker(\nu)$ is an admissible ideal of $kQ$ (see Remark \ref{r_nonadmissible}), and we do \emph{not} assume that $e_i$ are primitive idempotents (See Example \ref{e_group}).

The elements of $I$ are called \emph{relations},
and a \emph{minimal relation} in $I$ is a nonzero relation 
$r = \sum_{i=1}^s a_ip_i$,
where the $p_i$ are distinct paths in $Q$ and $a_i \in k\smallsetminus \{0\}$, such that there is no proper nonempty
subset $T \subset \{1, \dots , s\}$ for which $\sum_{i\in T}  a_i p_i \in I $. Note that all paths appearing in a minimal relation $r$ automatically share the same source and target, that is, are parallel.

If $\alpha\in Q_1$, then the \emph{formal inverse} of $\alpha$, denoted by $\alpha^{-1}$, is an arrow with source and target equal to the target and
the source of $\alpha$, respectively.
They assemble into a quiver $Q^{-1}$ with $Q^{-1}_0=Q_0$ and $Q^{-1}_1:=\{\alpha^{-1} | \alpha \in
Q_1\}$. Then we can form the double quiver $\overline{Q}$ where
$\overline{Q}_0 = Q_0$
and $\overline{Q}_1 = Q_1 \cup Q^{-1}_1$.
With this notation, a \emph{walk} in $Q$ is  an oriented path in $\overline{Q}$.

\begin{definition}
\label{homotopydef}
To define the fundamental group, we first recall the \emph{homotopy relation}, which was introduced in \cite{MP}. By definition $\sim_I$ is the equivalence relation 
on the set of walks in ${Q}$ generated by:
\begin{enumerate}
\item\label{hrel1} $\alpha^{-1}\alpha \sim_I e_i$ and $\alpha\alpha^{-1}\sim_I e_j$ for any arrow $\alpha$ with source $e_i$ and target $e_j$;
\item\label{hrel2} if $v \sim_I v^{\prime}$ then also $wvu \sim_I wv'u$, where $w,v,v'$ and $u$ are walks such that the concatenations $wvu$ and $wv^{\prime}u$
are well-defined;
\item\label{hrel3} $u \sim_I v$ if $u$ and $v$ are paths which occur with a nonzero coefficient in the same minimal relation.
\end{enumerate}

Fixing one of the chosen idempotents $e_i$, the set of $\sim_I$-equivalence classes of walks with source and target $e_i$ is denoted
by $\pi_1(Q,I,e_i)$. Concatenation of walks endows this set with a group structure whose unit is the equivalence class of the trivial walk $e_i$ \cite{MP}. The group $\pi_1(Q,I,e_i)$ is called the \emph{fundamental group} of $(Q,I)$ based at $e_i$.

If $Q$ is connected, another choice of idempotent $e_i$ will yield an isomorphic fundamental group,  so we adopt the simplified notation $\pi_1(Q,I)=\pi_1(Q,I,e_i)$. If $Q$ is not connected, then following the convention in \cite{DS}, the fundamental group  $\pi_1(Q,I)$ is the direct product of the fundamental groups of
the quivers with relations obtained by restricting to the connected components of $Q$.
\end{definition}

The fundamental group $\pi_1(Q, I)$ does depend on the ideal $I=\ker(\nu)$ and therefore
it is not an invariant of the isomorphism class of $A=kQ/I$; see for example \cite[Example 1.2]{Le}.

\begin{remark}\label{r_nonadmissible}
To prove our main theorem it will be important to consider non-admissible (but still minimal) presentations, such as $kQ=k[x]\to A=k[x]/(x^p-1)$ when $k$ has characteristic $p$. See Remark \ref{r_admisschar} and Example \ref{e_JW} for more detail. The usual proof that $\pi_1(Q, I)$ is a group extends easily to this context.
\end{remark}

\begin{example}\label{e_group}
Since we allow non-admissible presentations our algebras need not be basic. For example, let $G$ be a finite group, and let $Q$ be the quiver with a single vertex and with edges in bijection with the elements of $G$. Consider the natural presentation $\nu\colon kQ\to kG$, with $I=\ker(\nu)$. Then the group $\pi_1(Q,I)$ is canonically isomorphic to $G$.
\end{example}

\begin{definition}
We will write $\pi_1(Q,I)^\vee=\Hom(\pi_1(Q,I),k^+)$ for the group of additive characters on $\pi_1(Q,I)$. This will play a central role below.
\end{definition}

\begin{remark}
When $X$ is a based, connected topological space, there is a Hurewicz isomorphism $\pi_1(X)^\vee\cong H^1(X;k)$ between the group of additive characters on the fundamental group of $X$ and the first singular cohomology of $X$. Thus $\pi_1(Q,I)^\vee$ is analogous to the first singular cohomology of $(Q,I)$. 

In fact, from $(Q,I)$ one can produce a based space $X$ such that $\pi_1(X)\cong \pi_1(Q,I)$; see \cite{Bust} and also \cite{Rey}.
\end{remark}

In a similar vein, one can also view $\pi_1(Q,I)^\vee$ as a generalization of graph cohomology. 
Suppose that $Q$ has $n$ vertices 
and $m$ edges and $c$ connected components. The first Betti number of the underlying graph $|Q|$ of $Q$ is given by $\beta_1(|Q|)=\dim_kH^1(|Q|;k)=m -n + c$ \cite[Lemma 8.2]{De}. Because of this, we adopt the notation $\beta_1(Q)=m -n + c$. Intuitively, $\beta_1(Q)$ is the number of holes in $Q$.

\begin{definition}
An ideal $I$ in $kQ$ is called \emph{semimonomial} if, within any minimal relation $r = \sum_{i=1}^s a_ip_i$, $a_i\neq 0$, each path $p_i$ contains exactly the same arrows, occurring the same number of times (so the different $p_i$ are permutations of each other); see \cite[Section 1]{GAS}. Also see Subsection \ref{s_semimon} for examples.
\end{definition}

\begin{lemma}
\label{ineq}
For a bound quiver $(Q,I)$ we have that 
\[
\dim_k \pi_1(Q,I)^\vee\leq \beta_1(Q).
\]
Equality holds if $I$ is a monomial ideal, and more generally if $I$ is semimonomial.
\end{lemma}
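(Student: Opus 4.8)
The plan is to build an explicit surjection from a free abelian (or $k$-vector space) model of $\beta_1(Q)$ onto $\pi_1(Q,I)^\vee$, or dually, an explicit injection $\pi_1(Q,I)\hookrightarrow$ (free group on $\beta_1(Q)$ generators) after abelianization — and then handle the monomial/semimonomial case by showing this map is onto. Concretely, I would first reduce to the case that $Q$ is connected, since both sides are additive over connected components (the right-hand side because $\beta_1(Q) = \sum_j \beta_1(Q_j)$, the left because $\pi_1$ of a disconnected quiver is defined as the product). So assume $Q$ is connected with $n$ vertices, $m$ arrows, hence $\beta_1(Q) = m - n + 1$.

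Next I would fix a spanning tree $T \subseteq Q_1$ (in the underlying graph $|Q|$), which has $n-1$ arrows. The classical argument — the same one that computes $\pi_1$ of a graph — shows that the free group $F$ on the $m - (n-1) = \beta_1(Q)$ arrows \emph{not} in $T$ surjects onto $\pi_1(Q,I,e_i)$: every walk based at $e_i$ is homotopic, using only relation (1) (inverses) and relation (2) (locality), to a product of the loops $\ell_\alpha$ obtained by going through $T$ to the source of $\alpha \notin T$, then across $\alpha$, then back through $T$. Imposing relation (3) (minimal relations) only adds more identifications, so $\pi_1(Q,I)$ is a quotient of $F$, and hence $\pi_1(Q,I)^{\mathrm{ab}}$ is a quotient of $\mathbb{Z}^{\beta_1(Q)}$. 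Applying $\Hom(-,k^+)$ is left exact and turns this quotient into an injection $\pi_1(Q,I)^\vee \hookrightarrow \Hom(\mathbb{Z}^{\beta_1(Q)},k^+) = k^{\beta_1(Q)}$, giving the inequality $\dim_k \pi_1(Q,I)^\vee \le \beta_1(Q)$.

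For the equality statement when $I$ is (semi)monomial, I would argue that relation (3) contributes nothing new to the \emph{abelianization}. In the monomial case this is immediate: $I$ has no minimal relations that are genuine sums (minimal relations are single paths, which impose $p \sim_I 0$ only in the sense that there is no relation $u \sim_I v$ of type (3) at all — a monomial ideal forces $s=1$), so $\pi_1(Q,I)$ is the free group on the non-tree arrows and $\pi_1(Q,I)^\vee = k^{\beta_1(Q)}$ exactly. In the semimonomial case, a minimal relation $r = \sum_i a_i p_i$ has all $p_i$ permutations of one another, so any two paths $p_i, p_j$ occurring in it traverse each arrow the same number of times; hence in the abelianization $\pi_1(Q,I)^{\mathrm{ab}}$ the images of $p_i$ and $p_j$ coincide automatically (they are equal as words in the abelian group before we even impose relation (3)), so relation (3) is redundant abelianized. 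Therefore $\pi_1(Q,I)^{\mathrm{ab}} \cong \mathbb{Z}^{\beta_1(Q)}$ and equality holds.

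The main obstacle is being careful about what "homotopic to a product of $\ell_\alpha$'s" means and that it is surjective — i.e.\ nailing down the standard graph-theoretic normal form for walks modulo relations (1) and (2) — and, for the equality case, verifying cleanly that in a semimonomial relation all occurring paths agree in the free abelian group (this is exactly the definition: they are permutations of each other as sequences of arrows). Neither step is deep; the bookkeeping with the spanning tree and the based/unbased subtleties (choosing the basepoint $e_i$, and the passage from $\pi_1$ to its abelianization before applying $\Hom(-,k^+)$, which is legitimate since $k^+$ is abelian) is the only thing that needs attention.
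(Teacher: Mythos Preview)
Your proposal is correct and follows essentially the same argument as the paper: both show that $\pi_1(Q,I)^{\rm ab}$ is a quotient of the free abelian group of rank $\beta_1(Q)$ (you via a spanning tree and the loops $\ell_\alpha$, the paper via $H_1(|Q|;\mathbb{Z})$ and a choice of generating loops---these are the same construction), and both handle the (semi)monomial case by observing that relation (3) imposes nothing new after abelianization. Your treatment is slightly more explicit about the spanning-tree normal form and the passage through $\Hom(-,k^+)$, but the logic is identical.
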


\begin{proof}
Note that $H_1(|Q|;\mathbb{Z})$ is a free abelian group of rank $\beta_1(Q)$, and the abelianization $\pi_1(Q,I)^{\rm ab}$ is a quotient of $H_1(|Q|;\mathbb{Z})$. Indeed, if one chooses loops $\ell_1,...,\ell_{\beta_1(Q)}$ based at $e_i$ representing the generators of $H_1(|Q|;\mathbb{Z})$ (i.e.\ a loop for each  hole in $|Q|$), then by conditions (\ref{hrel1}) and (\ref{hrel2}) in Definition \ref{homotopydef} $\pi_1(Q,I)^{\rm ab}$ is generated by $\ell_1,...,\ell_{\beta_1(Q)}$. It follows that $\dim_k \pi_1(Q,I)^\vee\leq \beta_1(Q)$.

In the case that $I$ is monomial, condition (\ref{hrel3}) in Definition \ref{homotopydef} does not identify any walks, so the homotopy relation $\sim_I$ does not even depend on $I$, and $H_1(|Q|;\mathbb{Z})\cong \pi_1(Q,I)^{\rm ab}$.

If $I$ is semimonomial then all walks which are $\sim_I$-equivalent to the identity are in the commutator subgroup of $\pi_1(Q,(0))$, so again in this case  $H_1(|Q|;\mathbb{Z})\cong \pi_1(Q,I)^{\rm ab}$.
\end{proof}

\begin{remark}\label{r_semimon_converse}
If $k$ has characteristic zero then the proof of the lemma is reversible: $\dim_k \pi_1(Q,I)^\vee = \beta_1(Q)$ if and only if $I$ is semimonomial. 

To obtain a converse in characteristic $p$, one could define $I$ to be $p$-\emph{semimonomial} if for each path in a given minimal relation, each arrow occurs the same number of times modulo $p$. Then in this context $\dim_k \pi_1(Q,I)^\vee = \beta_1(Q)$ if and only if $I$ is $p$-semimonomial.  
\end{remark}

\begin{remark} 
While special presentations will produce interesting fundamental groups, the fundamental group is very often trivial, because in a generic presentation the minimal relations will involve many terms.  Le Meur \cite{Le} considers the relation between all the different fundamental groups, along with some natural surjections between them. It is proven in loc.\ cit.\ that for certain very specific classes of algebras there is one fundamental group which surjects onto all others; see  \cite[Theorem 1]{Le}. In general this is not the case.
\end{remark}

\section{The Hochschild cohomology Lie algebra}
\label{s_HHLie}

As before, let $A$ be a finite dimensional algebra over an algebraically 
closed field $k$, with a complete set of orthogonal idempotents $e_1,...,e_n$.

A \emph{derivation} on $A$ is a $k$-linear map $f \colon A\to A$ satisfying the Leibniz rule, that is, 
$f(ab)=$ $f(a)b+af(b)$ for all $a,b\in A$. The {space of derivations} $\Der(A)$ on $A$ is a Lie algebra: if $f$ and $g$ are 
derivations on $A$, then so is $[f,g]:=$ $f\circ g-g\circ f$. 
The space $\Inn(A)$ of \emph{inner derivations} consists of those derivations of the form $[c,-]\colon a \mapsto ca-ac$,
for $c\in$ $A$. Then $\Inn(A)$ is a Lie ideal in $\Der(A)$ and there is a canonical isomorphism 
\[
\HH^1(A)\cong \Der(A)/\Inn(A),
\]
which we take as our definition of the first Hochschild cohomology group of $A$. A slightly more efficient description of this group will be useful: 

\begin{definition}
Let $\Der_0(A)$ be the space of those $k$-linear derivations $f$ such that $f(e_i)=0$ for $i=1,...,n$, and let $\Inn_0(A)= \Der_0(A)\cap \Inn(A)$. By  \cite[Proposition 1]{DS} there is a canonical isomorphism
\[
\HH^1(A)\cong \Der_0(A)/\Inn_0(A).
\]
\end{definition}

\begin{definition}

An element $f \in \HH^1(A)$ 
is called \emph{diagonalizable} 
if it can be represented by a derivation 
$d\in \Der(A)$ which acts diagonalizably on $A$, with respect to some $k$-linear basis of $A$. 
  More generally, we say that a subspace $\t \subseteq \HH^1(A)$ is  \emph{diagonalizable} if  
its elements can be represented by derivations which are simultaneously diagonalizable on $A$. Note that $\t$ is automatically a subalgebra of $\HH^1(A)$ since $[\t,\t]= 0$. The \emph{maximal diagonalizable subalgebras} are by definition those diagonalizable subalgebras which are maximal with respect to inclusion.
\end{definition}

Diagonalizable derivations were studied by Farkas, Green, and Marcos \cite{FGM} and by Le Meur \cite{Le}. Note that Le Meur assumes that diagonalizable derivations preserve the radical, and we do not.

With some background in Lie theory one can characterize diagonalizable subalgebras intrinsically in terms of tori; see the next section for definitions (Definition \ref{char0torusdef} in characteristic zero and Definition \ref{charptorusdef} in positive characteristic). While the next result uses  standard theory, it is significant in this paper because it will imply derived invariance (and invariance under stable equivalences of Morita type) for diagonalizable subalgebras.

\begin{proposition}\label{p_lieHH} 
The maximal tori of $\HH^1(A)$ (or $\Der(A)$ or $\Der_0(A)$) are exactly the maximal diagonalizable subalgebras of $\HH^1(A)$ (or $\Der(A)$ or $\Der_0(A)$, respectively).
\end{proposition}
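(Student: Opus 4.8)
The plan is to establish, for each of the finite-dimensional Lie algebras $\L\in\{\Der(A),\Der_0(A),\HH^1(A)\}$, the two implications: (a) every torus of $\L$ is a diagonalizable subalgebra of $\L$; and (b) every maximal diagonalizable subalgebra of $\L$ is a torus of $\L$. Granting (a) and (b) the proposition is purely formal: a maximal diagonalizable subalgebra $\t$ is a torus by (b), any torus containing $\t$ is diagonalizable by (a) hence equals $\t$ by maximality, so $\t$ is a maximal torus; conversely a maximal torus $\t$ is diagonalizable by (a), lies in a maximal diagonalizable subalgebra which is a torus by (b) hence equals $\t$, so $\t$ is a maximal diagonalizable subalgebra.

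Everything hinges on the classical fact that Jordan--Chevalley data respects the Leibniz rule. Given $d\in\Der(A)$, decompose $A=\bigoplus_\lambda A_\lambda$ into generalized $d$-eigenspaces; the Leibniz rule forces $A_\lambda A_\mu\subseteq A_{\lambda+\mu}$, so the operator $d_s$ acting as the scalar $\lambda$ on each $A_\lambda$ is a derivation, and then so is $d_n=d-d_s$, while $e_i\in\ker d\subseteq A_0$ shows $d_s,d_n\in\Der_0(A)$ whenever $d\in\Der_0(A)$. If $\chr k=p$ one further has $d^{\circ p}\in\Der(A)$ since $\binom pk\equiv 0\pmod p$ for $0<k<p$. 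Consequently $\Der(A)$ and $\Der_0(A)$ sit inside $\gl(A)$ as Lie algebras of the linear algebraic groups $\Aut(A)$, $\Aut_0(A)$ when $\chr k=0$, and as restricted subalgebras when $\chr k=p$, and in both cases they are closed under Jordan decomposition. From this the dictionary that matters follows by standard Lie theory: unwinding Definitions \ref{char0torusdef} and \ref{charptorusdef}, an element of $\Der(A)$ (or $\Der_0(A)$) is semisimple exactly when it is diagonalizable as an operator on $A$; and since pairwise-commuting diagonalizable operators are simultaneously diagonalizable, a subspace of $\Der(A)$ (or $\Der_0(A)$) is a diagonalizable subalgebra exactly when it is abelian and consists of semisimple elements.

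With the dictionary in hand, (a) and (b) for $\L=\Der(A)$ (and verbatim for $\L=\Der_0(A)$) are short. For (a): a torus is abelian and its elements are semisimple, hence diagonalizable, hence, being abelian, simultaneously diagonalizable, so a torus is a diagonalizable subalgebra. For (b): let $\t$ be a maximal diagonalizable subalgebra. When $\chr k=p$, for $d\in\t$ the operator $d^{\circ p}$ is diagonalizable in a common eigenbasis for $\t$ and commutes with $\t$, so $\t+kd^{\circ p}$ is still a diagonalizable subalgebra and maximality gives $d^{\circ p}\in\t$; thus $\t$ is a restricted subalgebra, abelian, with semisimple elements, hence a torus. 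When $\chr k=0$, the operators diagonal in a common eigenbasis for $\t$ form $\operatorname{Lie}(D)$ for the diagonal torus $D\leq\mathrm{GL}(A)$, so $\t\subseteq\operatorname{Lie}(D)\cap\Der(A)=\operatorname{Lie}(D\cap\Aut(A))=\operatorname{Lie}(T)$, where $T$ is the identity component of $D\cap\Aut(A)$, an algebraic torus of $\Aut(A)$; since $\operatorname{Lie}(T)$ is itself a diagonalizable subalgebra, maximality forces $\t=\operatorname{Lie}(T)$, a torus.

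For $\L=\HH^1(A)$ one repeats the argument after observing that $\HH^1(A)=\operatorname{Lie}(\Out(A))$ when $\chr k=0$, and is the restricted quotient of $\Der_0(A)$ by the restricted ideal $\Inn_0(A)$ when $\chr k=p$ (restricted since $[c,-]^{\circ p}=[c^p,-]$ and $c\in\bigoplus_ie_iAe_i$ implies $c^p\in\bigoplus_ie_iAe_i$). Since the projection $\Der_0(A)\to\HH^1(A)$ is a homomorphism of algebraic (resp.\ restricted) Lie algebras it respects Jordan decompositions, so a class in $\HH^1(A)$ is semisimple precisely when it admits a diagonalizable representative derivation; this recovers the dictionary for $\HH^1(A)$, and (a), (b) and the formal argument carry over, using the standard fact that every (maximal) torus of the quotient $\HH^1(A)$ is the image of one of $\Der_0(A)$. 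I expect the only genuinely delicate point to be precisely this characteristic-$p$ bookkeeping around the restricted structure: checking that restricted subalgebras of $\gl(A)$ closed under $p$-th powers are closed under Jordan decomposition, and that this behaviour descends to the quotient $\HH^1(A)$, so that throughout ``semisimple element'' coincides with ``diagonalizable derivation''; in characteristic zero the statement then reduces cleanly to standard facts about tori in linear algebraic groups, given that the Jordan components of a derivation are again derivations.
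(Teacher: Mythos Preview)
Your proof is correct and follows essentially the same route as the paper: both split according to the characteristic, treat $\Der(A)$ and $\Der_0(A)$ via their embedding in $\gl(A)$ (algebraic-group tori in characteristic zero, restricted Lie algebra tori in characteristic $p$), and then pass to $\HH^1(A)$ using that maximal tori lift along the surjection from $\Der_0(A)$. The paper delegates these steps to the propositions collected in Section~\ref{Maximal_tori} (Propositions~\ref{p_char0diagtorus}, \ref{p_char0torussurjection}, \ref{p_char0torusbijection}, \ref{psemisimple}, \ref{t_charpsurjection} and Lemma~\ref{l_diag_aut_der}), whereas you unpack them inline, in particular making the Jordan--Chevalley argument for derivations explicit; one small caution is that your phrase ``repeat the argument'' for $\HH^1(A)$ is slightly loose in characteristic zero, since $\Out(A)$ has no canonical faithful linear representation and you really are using the lifting of tori along $\Aut_0(A)^\circ\to\Out(A)^\circ$ that you invoke at the end, exactly as the paper does.
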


\begin{proof} We use results and notation from Section \ref{Maximal_tori}. 

When $k$ has characteristic zero we use the theory of algebraic groups. Let $\Aut(A)$ be the algebraic group of $k$-linear automorphisms of $A$, and $\Aut_0(A)$ be the closed subgroup of those automorphisms which fix each idempotent $e_i$. We apply Proposition \ref{p_char0diagtorus} to the natural embeddings of $\Aut(A)^\circ$  and $\Aut_0(A)^\circ$ into ${\rm GL}(A)$, and then we apply Proposition \ref{p_char0torusbijection} and Lemma \ref{l_diag_aut_der} to deduce the desired statement for $\Der(A)=\L(\Aut(A))$ and $\Der_0(A)=\L(\Aut_0(A))$. After this, the statement for $\HH^1(A)$ follows by applying Proposition \ref{p_char0torussurjection} to the surjection $\Aut(A)^\circ\to \Out(A)^\circ$, and then applying Proposition \ref{p_char0torusbijection} with the isomorphism $\L(\Out(A))\cong \HH^1(A)$ of \cite[Th\'eor\`eme 1.2.1.1]{Stra2}.

In positive characteristic we use the theory of restricted Lie algebras. For $\Der(A)$ and $\Der_0(A)$ the statement follows from Proposition \ref{psemisimple} by embedding into $\gl(A)$. Then for $\HH^1(A)$ the statement follows from Proposition \ref{t_charpsurjection} applied to the surjection $\Der(A)\to \HH^1(A)$.
\end{proof}

\begin{lemma}\label{l_Der0tor}
For every maximal torus $\t\subseteq \HH^1(A)$ there is a maximal torus $\t'\subseteq \Der_0(A)$ whose image under the surjection $\Der_0(A)\to \HH^1(A)$ is $\t$.
\end{lemma}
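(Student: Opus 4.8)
The plan is to lift a maximal torus $\t\subseteq \HH^1(A)$ along the surjection $\pi\colon \Der_0(A)\to \HH^1(A)$ (with kernel $\Inn_0(A)$) to a torus $\t'\subseteq \Der_0(A)$ mapping onto $\t$, and then enlarge $\t'$ to a maximal torus; the final step is to check that maximality is not lost when passing back down. First I would invoke the results of Section \ref{Maximal_tori} on lifting tori along surjections of algebraic groups (in characteristic zero) or of restricted Lie algebras (in characteristic $p$) --- the same machinery cited in the proof of Proposition \ref{p_lieHH}, namely Proposition \ref{p_char0torussurjection} and Proposition \ref{t_charpsurjection}. These give, for the surjection $\Der_0(A)\to\HH^1(A)$, a torus $\t'_0\subseteq\Der_0(A)$ with $\pi(\t'_0)=\t$. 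By Proposition \ref{p_lieHH} applied to $\Der_0(A)$, the torus $\t'_0$ is contained in some maximal diagonalizable subalgebra $\t'$ of $\Der_0(A)$, which is again a maximal torus.

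The key point to verify is then that $\pi(\t')=\t$, i.e.\ that enlarging $\t'_0$ to $\t'$ did not enlarge its image. Since $\t\subseteq\pi(\t')$ already, and $\pi(\t')$ is a diagonalizable subalgebra of $\HH^1(A)$ (the image of simultaneously diagonalizable derivations under $\pi$ is again represented by simultaneously diagonalizable derivations, and it is abelian as a quotient of an abelian algebra), maximality of $\t$ forces $\pi(\t')=\t$. This is the crux: it uses that $\t$ was chosen maximal, so any diagonalizable subalgebra of $\HH^1(A)$ containing $\t$ equals $\t$.

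The main obstacle I expect is purely bookkeeping about which torus-lifting statement applies: in characteristic zero one must pass through the algebraic groups $\Aut_0(A)^\circ\to\Out_0(A)^\circ$ (or the relevant quotient with Lie algebra $\HH^1(A)$ realized via $\Der_0/\Inn_0$), check that the differential of this surjection of algebraic groups is exactly $\pi$, and then apply the surjectivity-on-tori result for surjections of connected algebraic groups; in characteristic $p$ one must check $\Inn_0(A)$ is a restricted ideal so that $\pi$ is a surjection of restricted Lie algebras, and then apply Proposition \ref{t_charpsurjection}. Once the correct framework is fixed, the argument is the short one sketched above, and no computation with explicit derivations is needed.
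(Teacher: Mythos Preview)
Your proposal is correct and follows essentially the same approach as the paper: invoke the torus-lifting results for surjections (Proposition~\ref{p_char0torussurjection} via $\Aut_0(A)^\circ\to\Out(A)^\circ$ in characteristic zero, Proposition~\ref{t_charpsurjection} via $\Der_0(A)\to\HH^1(A)$ in positive characteristic). Your intermediate lift--enlarge--verify sequence is slightly more than needed: the second clause of those propositions says directly that any maximal torus $\t'$ of $\pi^{-1}(\t)$ is already a maximal torus of $\Der_0(A)$, and then the first clause gives that $\pi(\t')$ is a maximal torus contained in $\t$, hence equal to it---so no separate enlargement step is required. But your argument is sound as written.
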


\begin{proof}
The proof is very similar to that of Proposition \ref{p_lieHH}: in characteristic zero we use the surjection $\Aut_0(A)^\circ\to \Out(A)^\circ$, and in positive characteristic we use the surjection $\Der_0(A)\to \HH^1(A)$.
\end{proof}

\begin{lemma}\label{l_diag_pres}
If $\t \subseteq \HH^1(A)$ is a diagonalizable subalgebra then, up to inner derivations, the elements of $\t$ are simultaneously diagonalizable with respect to a basis of paths for some (possibly non-admissible) presentation $\nu\colon kQ\to A$. 
\end{lemma}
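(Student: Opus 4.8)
The plan is to replace $\t$ by a lift to $\Der_0(A)$ consisting of simultaneously diagonalizable derivations that kill every idempotent, and then to read a presentation with a path basis directly off a simultaneous eigenbasis of $A$.

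First I would extend $\t$ to a maximal diagonalizable subalgebra of $\HH^1(A)$, which by Proposition~\ref{p_lieHH} is a maximal torus $\T\supseteq\t$. Lemma~\ref{l_Der0tor} then produces a maximal torus $\T'\subseteq\Der_0(A)$ whose image under the surjection $\Der_0(A)\to\HH^1(A)$ is $\T$, and by Proposition~\ref{p_lieHH} again $\T'$ is a maximal diagonalizable subalgebra of $\Der_0(A)$, so its elements are simultaneously diagonalizable on $A$. Since $\T'\to\T$ is surjective, the subspace $\t\subseteq\T$ lifts to a subspace $\t'\subseteq\T'$ mapping isomorphically onto it; pick a basis $d_1,\dots,d_r$ of $\t'$. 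These derivations are simultaneously diagonalizable on $A$ and satisfy $d_k(e_i)=0$ for all $i,k$, and every element of $\t$ is represented, modulo $\Inn_0(A)$, by a linear combination of the $d_k$, which is precisely the ``up to inner derivations'' clause in the statement.

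Next, because each $d_k$ annihilates the idempotents, the Leibniz rule shows $d_k$ preserves every Peirce component of $A=\bigoplus_{i,j}e_iAe_j$, and each line $ke_i$ is $d_k$-stable. Hence I can choose a simultaneous eigenbasis $B$ of $A$ which is a union of eigenbases of the Peirce components and which contains $e_1,\dots,e_n$. Let $Q$ be the quiver with $Q_0=\{1,\dots,n\}$ and with one arrow $\alpha_b$ for each $b\in B$ that is not one of the $e_i$, the source and target of $\alpha_b$ being the vertices determined by the Peirce component containing $b$; this $Q$ is finite since $B$ is. Setting $\nu(e_i)=e_i$ and $\nu(\alpha_b)=b$ defines an algebra homomorphism $\nu\colon kQ\to A$ (there is no obstruction, as $kQ$ is the tensor algebra of the arrow bimodule over $k^{Q_0}$ and $\sum_i\nu(e_i)=1$), and $\nu$ is surjective because its image contains the spanning set $B$. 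The trivial paths $e_i$ together with the length-one paths $\alpha_b$ map bijectively onto $B$, so they form a basis of paths for this (highly non-admissible) presentation, and by construction each $d_k$ acts diagonally on $B=\nu(\{e_i\}\cup\{\alpha_b\})$. This is exactly the statement of the lemma.

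The only real content is the first step: exhibiting representatives of $\t$ that simultaneously kill the idempotents and remain simultaneously diagonalizable. Correcting a diagonalizable derivation by an inner derivation so that it fixes the $e_i$ need not preserve diagonalizability, so one cannot argue one derivation at a time; this is why the argument routes through Lemma~\ref{l_Der0tor} and Proposition~\ref{p_lieHH}, and hence through the algebraic-group (characteristic zero) and restricted-Lie-algebra (characteristic $p$) theory of Section~\ref{Maximal_tori}. Once suitable representatives are in hand, the construction of $(Q,\nu)$ is tautological.
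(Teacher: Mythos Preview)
Your argument is correct, and the first half---lifting $\t$ through Proposition~\ref{p_lieHH} and Lemma~\ref{l_Der0tor} to simultaneously diagonalizable derivations in $\Der_0(A)$---is exactly what the paper does.

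The second half differs. The paper, after refining the eigenbasis $\mathcal B$ to be Peirce-compatible and to contain the $e_i$, takes for $Q_1$ only a subset of $\mathcal B\smallsetminus\{e_i\}$ that descends to a basis of $A/\mathrm{rad}(A)^2$; surjectivity of $\nu$ then follows from nilpotence of the radical, and diagonality on the resulting basis of (possibly long) paths is deduced from the Leibniz rule. You instead take one arrow for \emph{every} non-idempotent eigenvector, so that the length-$\leq 1$ paths already map bijectively onto $\mathcal B$; surjectivity and diagonality are then immediate and no Leibniz argument is needed. Your route is slicker for the bare lemma, at the cost of producing a far-from-minimal presentation (with roughly $\dim_k A$ arrows), whereas the paper's $Q$ is the Gabriel quiver when the $e_i$ are primitive. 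Since the downstream applications (Corollary~\ref{derinv}, Theorem~\ref{rankbetti}) require minimal presentations anyway, the paper's more economical quiver is closer to what is eventually used---but minimality is handled separately there, so your version suffices for the lemma as stated.
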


\begin{proof}
It follows from Proposition \ref{p_lieHH} and Lemma \ref{l_Der0tor} that $\t$ is the image of a diagonalizable subalgebra $\t'\subseteq \Der_0(A)$, so we may represent the elements of $\t$ by derivations $\delta$ such that $\delta(e_i)=0$ for all $i=1,...,n$, and all of which are diagonal with respect to some basis $\mathcal{B}$ of $A$. 

Firstly, by replacing $\mathcal{B}$ with a subset of $\{e_jbe_i : b\in \mathcal{B}, \ i,j=1,...,n\}$, we can assume that each element of $\mathcal{B}$ satisfies $e_jbe_i\neq 0$ for a unique pair $i,j$. By removing elements from $\{e_1,...,e_n\}\cup \mathcal{B}$, we can also assume that $\{e_1,...,e_n\}\subseteq \mathcal{B}$. Now let $Q_0=\{e_1,...,e_n\}$ and let $Q_1\subseteq \mathcal{B}\smallsetminus \{e_1,...,e_n\}$ be a subset such that $Q_0\cup Q_1$ descends to a basis of $A/{\rm rad}(A)^2$. The induced homomorphism $\nu\colon kQ\to A$ is surjective {modulo} ${\rm rad}(A)^2$, and since ${\rm rad}(A)$ is nilpotent it follows by induction that $\nu$ is surjective.

To end the proof, $A$ admits a basis consisting of paths in the quiver $Q$, and it follows from the Leibniz rule that each $\delta$ as above is diagonal with respect to this basis.
\end{proof}

\begin{definition} With Lemma \ref{l_diag_pres} in mind, we say that a derivation $\delta\in \Der(A)$ is diagonal with respect to a presentation $\nu \colon kQ\to A$ if $\delta(e)=0$ for each $e\in \nu(Q_0)$ and $\delta(a)\in ka$ for each $a\in \nu(Q_1)$. Equivalently, $\delta$ is diagonal with respect to a basis of paths induced by $\nu$. We also define
\[
\t_\nu^{\rm Der}=\left\{\delta\colon A\to A ~:~ \delta\text{ is a derivation diagonal with respect to } \nu \right\},
\]
and the image of $\t_\nu^{\rm Der}$ in $\HH^1(A)$ is denoted by $\t_\nu^{\rm HH}$.
\end{definition}

\begin{proposition}
\label{p_diagtorus}
Every maximal torus of $\HH^1(A)$ is of the form $\t_\nu^{\rm HH}$ for some presentation $\nu\colon kQ\to A$.
\end{proposition}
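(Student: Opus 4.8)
The plan is to combine Lemma \ref{l_diag_pres} with the observation that a maximal torus is, in particular, a maximal diagonalizable subalgebra (Proposition \ref{p_lieHH}), and then check that the presentation produced by Lemma \ref{l_diag_pres} has the property that $\t = \t_\nu^{\rm HH}$ rather than merely $\t \subseteq \t_\nu^{\rm HH}$. Let me spell this out. Let $\t \subseteq \HH^1(A)$ be a maximal torus. By Proposition \ref{p_lieHH} it is a maximal diagonalizable subalgebra, so Lemma \ref{l_diag_pres} applies: there is a (possibly non-admissible) presentation $\nu\colon kQ\to A$ and, for each element of $\t$, a representing derivation $\delta$ with $\delta(e_i)=0$ for the idempotents $e_i = \nu(Q_0)$ and $\delta$ diagonal with respect to the induced basis of paths. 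In particular each such $\delta$ sends each $a\in\nu(Q_1)$ into $ka$, so $\delta\in\t_\nu^{\rm Der}$, whence $\t\subseteq\t_\nu^{\rm HH}$.

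For the reverse inclusion I would argue that $\t_\nu^{\rm HH}$ is itself a diagonalizable subalgebra of $\HH^1(A)$. By construction every $\delta\in\t_\nu^{\rm Der}$ is diagonal with respect to the single fixed basis of paths induced by $\nu$, so all of these derivations are simultaneously diagonalizable on $A$; hence their images span a diagonalizable subspace $\t_\nu^{\rm HH}$, which (as noted after the definition of diagonalizable subalgebra, since the bracket vanishes) is a diagonalizable subalgebra. Since $\t\subseteq\t_\nu^{\rm HH}$ and $\t$ is a maximal diagonalizable subalgebra, we must have $\t=\t_\nu^{\rm HH}$, which is the claim.

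The step I expect to require the most care is verifying that $\t_\nu^{\rm HH}$ is genuinely diagonalizable \emph{as a subalgebra of $\HH^1(A)$} — that is, that passing to the quotient by $\Inn(A)$ does not destroy simultaneous diagonalizability, and that one is comparing like with like when invoking maximality. Concretely: the elements of $\t_\nu^{\rm Der}$ are honest derivations that are simultaneously diagonal on $A$ (they are each diagonal on the common path basis), so their images form a diagonalizable subalgebra by the very definition used in the paper; no subtlety with the quotient arises because diagonalizability of a subalgebra of $\HH^1(A)$ is defined via the existence of simultaneously diagonalizable \emph{representatives}, and here the representatives are already at hand. A secondary point worth a sentence is that $\t_\nu^{\rm Der}$ is closed under the bracket and contains $\Inn_0(A)$-type elements harmlessly — but since $[\t_\nu^{\rm Der},\t_\nu^{\rm Der}]$ acts by $0$ on each path (a commutator of diagonal operators on a basis is $0$ on that basis), $\t_\nu^{\rm Der}$ is abelian and the image $\t_\nu^{\rm HH}$ is a well-defined abelian, diagonalizable subalgebra, so everything is consistent. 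With these checks the proof is short.
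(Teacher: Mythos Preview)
Your proposal is correct and is exactly the argument the paper has in mind: the paper's own proof is the single sentence ``This follows from Proposition \ref{p_lieHH} and Lemma \ref{l_diag_pres},'' and you have simply unpacked that sentence, correctly identifying the one point that needs checking (that $\t_\nu^{\rm HH}$ is itself diagonalizable, so maximality forces $\t=\t_\nu^{\rm HH}$).
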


\begin{proof} This follows from Proposition \ref{p_lieHH} and Lemma \ref{l_diag_pres}.
\end{proof}

\begin{remark}
\label{r_admisschar}
The derivations in $\t^{\rm Der}_\nu$ will preserve the radical of $A$ if $\nu$ is an admissible presentation, and conversely, for derivations preserving the radical, Lemma \ref{l_diag_pres} can be adapted to produce an admissible presentation.  If $k$ has characteristic zero then all derivations preserve the radical \cite[Theorem 4.2]{HH}, 
so in this context we can restrict to admissible presentations in all our theorems.  In positive characteristic this is false (see Example \ref{e_JW}), so it will be important to consider non-admissible presentations. 
Several authors have instead considered the subalgebra $\HH^1_{\rm rad}(A)$ of $\HH^1(A)$ spanned by derivations which do preserve the radical \cite{ER,LR}.
\end{remark}

\section{Maximal tori}
\label{Maximal_tori}

In this section we recall what we need to know about tori in Lie algebras in order to complete the proof of Proposition \ref{p_diagtorus}. A satisfactory theory requires some extra structure, which depends on the characteristic of the given field: in characteristic zero we need an algebraic group whose tangent space at the identity is the given Lie algebra, and in positive characteristic we need a restricted structure.

The reader may wish to only skim this section for the needed Lie algebra facts, and can consult the given references for a fuller picture.

\subsection{Tori in characteristic zero}

For the rest of this subsection $k$ is an algebraically closed field of characteristic zero (the results here require this characteristic assumption). In this context, we introduce tori through the theory of algebraic groups.

The algebraic group that we are most interested in is $G=\Out(A)$, the group of outer automorphisms of a finite dimensional algebra $A$ over $k$; see \cite{HZS}. Note that $\Out(A)$ is affine since it is a quotient of the affine algebraic group $\Aut(A)$ of all $k$-linear automorphisms of $A$ \cite[Theorem 11.5]{Hum}.

\begin{definition}
\label{char0torusdef}
 An algebraic group $T$ over $k$ is called a \emph{torus} if it is isomorphic as an algebraic group to ${\rm D}_n(k)$, the group of invertible diagonal $n\times n$ matrices over $k$, for some $n$. If $G$ is any algebraic group, then a torus in $G$ is a closed algebraic subgroup $T\subseteq G$ which is (abstractly) a torus.
 \end{definition}

\begin{proposition}[{\cite[Theorem 12.12]{Mi}}]
\label{p_char0diagtorus}
Let $T$ be a connected algebraic group over $k$, and let $\phi\colon T\to {\rm GL}_n(k)$ be any embedding into the group of invertible  $n\times n$  matrices over $k$. Then $T$ is (abstractly) a torus if and only if $\phi(T)$ consists of simultaneously diagonalizable matrices.
\end{proposition}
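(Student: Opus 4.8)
The plan is to prove this standard fact by combining the structure theory of connected solvable (or more precisely, diagonalizable) groups with the equivalence between tori and diagonalizable groups that are connected. First I would establish the easy direction: if $T$ is abstractly a torus, i.e. $T\cong {\rm D}_n(k)$, then every representation of $T$ is diagonalizable. This follows because $T$ is linearly reductive in characteristic zero (indeed ${\rm D}_n(k)$ is a product of copies of $\mathbb{G}_m$, all of whose finite dimensional representations split into a direct sum of one dimensional weight spaces, $T$ acting on each by a character). Hence the composite $\phi\colon T\to {\rm GL}_n(k)$ decomposes $k^n$ into weight spaces, which is exactly the assertion that $\phi(T)$ is a set of simultaneously diagonalizable matrices (the characters are the simultaneous eigenvalues, and simultaneous diagonalizability of a commuting set of semisimple operators is automatic).

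For the converse, suppose $\phi(T)$ consists of simultaneously diagonalizable matrices. After conjugating we may assume $\phi(T)\subseteq {\rm D}_n(k)$, so $T$ is a connected closed subgroup of the torus ${\rm D}_n(k)$. Now I would invoke the classification of closed subgroups of a diagonalizable group: every closed subgroup of ${\rm D}_n(k)$ is again diagonalizable, being the intersection of kernels of characters, and a connected diagonalizable group over an algebraically closed field is itself a torus. Equivalently, one identifies ${\rm D}_n(k)$ with $\Hom_{\mathrm{grp}}(\mathbb{Z}^n,\mathbb{G}_m)$; a closed subgroup corresponds to $\Hom(M,\mathbb{G}_m)$ for a quotient $M$ of $\mathbb{Z}^n$, and this group is connected precisely when $M$ is torsion free, in which case $\Hom(M,\mathbb{G}_m)\cong \mathbb{G}_m^{\operatorname{rk} M}={\rm D}_{\operatorname{rk}M}(k)$. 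Thus $T$ is abstractly a torus, completing the proof. Since the paper cites this as \cite[Theorem 12.12]{Mi}, it would also be acceptable to simply quote Milne; but the argument above is self-contained given the linear reductivity of tori in characteristic zero and the character-group description of diagonalizable groups.

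The main obstacle, such as it is, lies in the converse direction: one must know that a \emph{connected} closed subgroup of a torus is a torus, which rests on the anti-equivalence between diagonalizable algebraic groups and finitely generated abelian groups, and on the fact that connectedness translates into the absence of torsion in the character group. This is exactly where the characteristic zero hypothesis enters in spirit (though the character-group statement holds in any characteristic, the full statement of Proposition \ref{p_char0diagtorus} — that simultaneous diagonalizability detects tori among connected groups — is clean here because every representation of a torus is diagonalizable, which would fail for, say, $\mu_p$ in characteristic $p$, though $\mu_p$ is not connected; the real subtlety in positive characteristic is handled separately in the restricted Lie algebra part of Section \ref{Maximal_tori}). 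Everything else is routine linear algebra: a commuting family of diagonalizable operators over an algebraically closed field is simultaneously diagonalizable, and conjugation into ${\rm D}_n(k)$ is harmless.
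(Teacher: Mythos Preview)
Your argument is correct: the forward direction via weight-space decomposition of torus representations, and the converse via the fact that a connected closed subgroup of ${\rm D}_n(k)$ is again a torus (through the character-lattice anti-equivalence), is precisely the standard route. Note, however, that the paper does not supply any proof of this proposition at all---it is stated purely as a citation of \cite[Theorem 12.12]{Mi} and used as a black box. So you have in fact provided more than the paper does; what you wrote is essentially a sketch of the proof one finds in Milne.

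One small comment: your closing paragraph about where the characteristic-zero hypothesis ``enters in spirit'' is a bit off-target. The statement of Proposition~\ref{p_char0diagtorus} itself is valid over any algebraically closed field (tori are diagonalizable and connected diagonalizable groups are tori, irrespective of characteristic). The paper's standing characteristic-zero assumption in that subsection is needed elsewhere---chiefly for the correspondence between subgroups of $G$ and subalgebras of $\L(G)$ used in Proposition~\ref{p_char0torusbijection}---rather than for this particular result. This does not affect the correctness of your proof, but the digression could be trimmed.
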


\begin{proposition}[{\cite[Proposition 17.20]{Mi}}]
\label{p_char0torussurjection}
Let $\phi\colon G\to G'$ be a surjective homomorphism of connected algebraic groups over $k$. 
\begin{itemize}
    \item If $T$ is a maximal torus of $G$, then $\phi(T)$ is a maximal torus of $G'$.
    \item If $T'$ is a maximal torus of $G'$ and $T$ is a maximal torus of $\phi^{-1}(T')$, then $T$ is a maximal torus of $G$.
\end{itemize}
\end{proposition}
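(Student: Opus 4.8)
The plan is to deduce both assertions from the corresponding facts about Borel subgroups, using throughout the elementary observation that a homomorphic image of a torus is again a torus. (This proposition is standard, and is attributed above to Milne; what follows is a recollection of the proof.)

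First I would record the two inputs. If $S$ is a torus in $G$, then $\phi(S)$ is a torus: it is closed (homomorphic images of algebraic groups are closed), connected (image of a connected group), and diagonalizable (a quotient of the diagonalizable group $S$), and a connected diagonalizable group is a torus; in particular $\dim\phi(S)\le\dim S$. Next, $\phi$ sends Borel subgroups to Borel subgroups: for $B\subseteq G$ a Borel subgroup, $\phi(B)$ is closed, connected and solvable, and $\phi$ induces a surjective morphism of varieties $G/B\to G'/\phi(B)$; since $G/B$ is complete, its image $G'/\phi(B)$ is complete, so $\phi(B)$ is a parabolic subgroup of $G'$, and being itself connected and solvable it coincides with any Borel subgroup it contains.

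For the first bullet, pick a Borel subgroup $B\supseteq T$; since every torus in $B$ is a torus in $G$, the maximal torus $T$ is a maximal torus of $B$, so $B=T\ltimes B_u$ with $B_u$ the unipotent radical. Applying $\phi$ gives $\phi(B)=\phi(T)\cdot\phi(B_u)$, where $\phi(B_u)$ is connected, normal and unipotent and $\phi(T)$ is a torus, whence $\phi(T)\cap\phi(B_u)=1$ and $\phi(B)=\phi(T)\ltimes\phi(B_u)$. Since $\phi(B)/\phi(B_u)\cong\phi(T)$ is a torus, the unipotent radical of $\phi(B)$ is exactly $\phi(B_u)$, so $\phi(T)$ is a maximal torus of the Borel subgroup $\phi(B)$, hence a maximal torus of $G'$. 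For the second bullet, let $T'$ be a maximal torus of $G'$ and $T$ a maximal torus of $\phi^{-1}(T')$. As a torus is connected, $T$ lies in $H:=\phi^{-1}(T')^\circ$ and is a maximal torus there, and $\phi|_H\colon H\to T'$ is a surjection of connected groups ($\phi(H)$ is closed, connected and of finite index in $T'$, hence all of $T'$). By the first bullet $\phi(T)=T'$. Now if $T\subseteq S$ with $S$ a torus in $G$, then $T'=\phi(T)\subseteq\phi(S)$, and $\phi(S)$ is a torus containing the maximal torus $T'$, so $\phi(S)=T'$; thus $S\subseteq\phi^{-1}(T')$ and, being connected, $S\subseteq H$, so $S=T$. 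Therefore $T$ is a maximal torus of $G$.

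The only genuinely non-formal step is that $\phi(B)$ is a Borel subgroup: this rests on the completeness of the flag variety $G/B$ together with the characterization of Borel subgroups as the connected solvable parabolics, and it is the classical core of the proposition. A smaller point to watch is that $\phi^{-1}(T')$ need not be connected, so in the second bullet one must pass to its identity component $H$ and check that $\phi|_H$ is still surjective. Since $k$ has characteristic zero, all the groups involved are smooth, so no scheme-theoretic subtleties arise.
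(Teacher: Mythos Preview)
Your proof is correct. The paper itself gives no proof of this proposition: it is quoted verbatim as \cite[Proposition 17.20]{Mi} and used as a black box, so there is nothing to compare your argument against. Your recollection of the standard proof---reduce to Borel subgroups via completeness of $G/B$, identify the unipotent radical of $\phi(B)$ as $\phi(B_u)$ to see $\phi(T)$ is maximal in $\phi(B)$, and for the second bullet pass to the identity component of $\phi^{-1}(T')$---is accurate and complete; the care you take with the possible disconnectedness of $\phi^{-1}(T')$ is exactly the point that needs attention.
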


 We use the notation $\L(G)$ for the Lie algebra attached to an algebraic group $G$ over $k$; that is, the tangent space at the identity with bracket induced by differentiating the commutator operation of $G$.  See \cite[Chapter 10]{Mi} for more information.   The connected component of the identity of $G$ will be denoted $G^\circ$, and we note that $\L(G^\circ)=\L(G)$ by definition.  
 In the next definition, we use the fact that a closed algebraic subgroup $T\subseteq G$ induces an inclusion of Lie algebras $\L(T)\subseteq \L(G)$.

 \begin{definition}
\label{char0Lietorusdef}
Let $\g=\L(G)$ be the Lie algebra of an algebraic group $G$ over $k$. A Lie subalgebra $\t\subseteq \g$ is a \emph{torus} if $\t=\L(T)$ for some torus $T\subseteq G$.
\end{definition}

\begin{proposition}
\label{p_char0torusbijection}
Let $\g=\L(G)$ be the Lie algebra of a connected algebraic group $G$ over $k$. The assignment $T\mapsto \L(T)$ induces a bijection between the maximal tori in $G$ and the maximal tori in $\g$.
\end{proposition}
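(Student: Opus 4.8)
The plan is to establish the bijection by constructing a map in each direction and showing they are mutually inverse, relying on standard structure theory of algebraic groups in characteristic zero. First I would show that if $T\subseteq G$ is a maximal torus, then $\L(T)$ is a maximal torus in $\g$. The subalgebra $\L(T)$ is a torus by Definition \ref{char0Lietorusdef}. To see it is maximal, suppose $\t'\supseteq \L(T)$ is a larger torus, so $\t'=\L(T')$ for some torus $T'\subseteq G$; I would then use that the union $TT'$ (or the subgroup it generates) is again a connected solvable group, and in fact — since tori are the connected diagonalizable groups — one can find a single torus $T''$ containing $T$ whose Lie algebra contains $\L(T')$, contradicting maximality of $T$ unless $T''=T$. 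The cleanest route is to invoke the fact that maximal tori of $G$ are all conjugate and that $\L(T)$ for $T$ maximal is a Cartan subalgebra (a maximal toral subalgebra) of $\g$; conversely every torus of $\g$ lies in some Cartan subalgebra, and Cartan subalgebras of $\g=\L(G)$ are exactly the $\L(T)$ for $T$ maximal.

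For the reverse direction, given a maximal torus $\t\subseteq \g$, I would write $\t=\L(T)$ for some torus $T\subseteq G$ by definition, and enlarge $T$ to a maximal torus $\widetilde T$ of $G$ (possible since $G$ is of finite dimension, so any ascending chain of tori stabilizes). Then $\L(T)\subseteq \L(\widetilde T)$, and by the first part $\L(\widetilde T)$ is a torus in $\g$; maximality of $\t$ forces $\L(T)=\L(\widetilde T)$. Because the functor $\L$ is faithful on connected subgroups in characteristic zero — more precisely, a connected closed subgroup of $G$ is determined by its Lie algebra (this uses $\chr k = 0$ essentially, via the exponential/the fact that a connected algebraic group has no infinitesimal subgroups) — we conclude $T=\widetilde T$, so $T$ is already maximal. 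Thus $T\mapsto\L(T)$ is surjective onto maximal tori of $\g$, and it is injective for the same reason ($\L$ faithful on connected subgroups).

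The two assignments $T\mapsto \L(T)$ and its inverse are then mutually inverse by construction: starting from a maximal torus $\t$, we produced the unique maximal torus $T$ with $\L(T)=\t$, and starting from a maximal torus $T$ we get $\L(T)$ whose associated group is $T$ again. It remains to assemble these observations into a clean statement, citing the needed facts from \cite{Mi} (conjugacy of maximal tori, the correspondence between Cartan subalgebras and maximal tori, and faithfulness of $\L$ on connected closed subgroups in characteristic zero).

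The main obstacle I expect is the faithfulness/rigidity input: one must know that in characteristic zero a connected closed subgroup is recovered from its Lie subalgebra, and that no torus of $\g$ can "outgrow" the Lie algebra of a maximal torus of $G$. In positive characteristic this would fail, which is exactly why the paper treats that case separately; so the proof here should make explicit use of $\chr k = 0$ at this step. Once that rigidity statement is in hand (it is standard, e.g.\ via the correspondence between Lie subalgebras and connected subgroups for groups in characteristic zero), the rest is a short formal argument about maximality under inclusion.
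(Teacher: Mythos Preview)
Your proposal has the right key input---faithfulness of $\L$ on connected closed subgroups in characteristic zero (Humphreys \cite[Theorem 13.1]{Hum})---and your reverse direction is correct. But the paper's argument is far more direct, and your forward direction takes an unnecessary and partly incorrect detour.

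The paper observes that, by Definition \ref{char0Lietorusdef}, the tori of $\g$ are \emph{by definition} exactly the subalgebras of the form $\L(T)$ for $T$ a torus of $G$. Humphreys' theorem says $H\mapsto \L(H)$ is an inclusion-preserving and inclusion-reflecting bijection between connected closed subgroups of $G$ and Lie subalgebras of the form $\L(H)$. Restricting to tori thus gives an order isomorphism between the poset of tori in $G$ and the poset of tori in $\g$, and any order isomorphism takes maximal elements to maximal elements. That is the whole proof.

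Your forward direction tries instead to argue via $TT'$ generating a solvable group, or via Cartan subalgebras. The first is vague (two tori need not commute, and the group they generate need not be a torus), and the Cartan route is actually wrong for non-reductive $G$: if $G$ is unipotent then $\g$ is nilpotent and equals its own Cartan subalgebra, yet the only torus in $G$---and hence in $\g$ under Definition \ref{char0Lietorusdef}---is trivial. So ``Cartan subalgebras of $\g$ are exactly the $\L(T)$ for $T$ maximal'' fails in general. The fix is immediate using the tool you already cite: if $\L(T)\subseteq \L(T')$ with $T,T'$ tori, then $T\subseteq T'$ by the inclusion-reflecting part of \cite[Theorem 13.1]{Hum}, so maximality of $T$ forces $T=T'$. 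Once you make that one-line replacement, your argument collapses to the paper's.
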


\begin{proof}
By \cite[Theorem 13.1]{Hum} the assignment $H\mapsto \L(H)$ induces a bijection between connected closed subgroups of $G$ and Lie subalgebras of $\g$ of the form $\L(H)$. It restricts to a bijection between tori by Definition \ref{char0Lietorusdef}, and since it is inclusion preserving, it restricts further to a bijection between maximal tori.
\end{proof}

 \begin{definition}
 Let $\g=\L(G)$ be the Lie algebra of an algebraic group $G$ over $k$. The maximal toral rank of $\g$ is 
\[
\mtrank(\g):=\max\{\dim \t ~:~ \t\subseteq  \g\text{ is a  torus}\}.
\]
\end{definition}

\begin{remark}
\label{conjugtori}
Let $T$ and $T'$ be two maximal tori of $G=\Out(A)^{\circ}$. Since all maximal tori of $G$ are conjugate by \cite[Theorem 17.10]{Mi}, we obtain an automorphism of $\HH^1(A)$ which sends $\L(T)$ to $\L(T')$.  In particular the dimensions of all maximal tori are the same.
\end{remark}

Now we turn to the case when $G=\Aut(A)$.

\begin{lemma}\label{l_diag_aut_der}
Let $A$ be a finite dimensional algebra over $k$, and let $\mathcal{B}$ be a basis of $A$. Then a closed connected algebraic subgroup $T\subseteq \Aut(A)^\circ$ consists of automorphisms diagonalizable with respect to $\mathcal{B}$ if and only if $\L(T)\subseteq \Der(A)$ consists of derivations diagonalizable with respect to $\mathcal{B}$.
\end{lemma}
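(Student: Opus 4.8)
The plan is to prove the equivalence by a direct linear-algebra argument relating the action of a torus $T$ to the action of its Lie algebra $\L(T)$ on the fixed basis $\mathcal{B}$. The key point is that $T$ is a connected algebraic subgroup of $\Aut(A)^\circ \subseteq \mathrm{GL}(A)$, and $\L(T) \subseteq \Der(A) \subseteq \gl(A)$ is recovered as the tangent space at the identity; both sit inside $\gl(A) = \mathrm{End}_k(A)$, which we identify with matrices via $\mathcal{B}$. Let $D_{\mathcal{B}} \subseteq \mathrm{GL}(A)$ be the subgroup of invertible matrices diagonal with respect to $\mathcal{B}$, with Lie algebra $\d_{\mathcal{B}} \subseteq \gl(A)$ the diagonal matrices. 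The statement is then: $T \subseteq D_{\mathcal{B}}$ if and only if $\L(T) \subseteq \d_{\mathcal{B}}$.

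First I would handle the easy direction: if $T \subseteq D_{\mathcal{B}}$, then $D_{\mathcal{B}}$ is a closed subgroup containing $T$, so $\L(T) \subseteq \L(D_{\mathcal{B}}) = \d_{\mathcal{B}}$, which says exactly that $\L(T)$ consists of derivations diagonal with respect to $\mathcal{B}$. For the converse, suppose $\L(T) \subseteq \d_{\mathcal{B}}$. The subgroup $D_{\mathcal{B}}$ is a torus, hence its own centralizer's identity component is $D_{\mathcal{B}}$; more to the point, $\d_{\mathcal{B}}$ is a commutative restricted (here: characteristic-zero) Lie subalgebra consisting of semisimple elements, and in fact $\d_{\mathcal{B}} = \L(D_{\mathcal{B}})$ with $D_{\mathcal{B}}$ a maximal torus among connected subgroups whose Lie algebra lies in $\d_{\mathcal{B}}$. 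The cleanest route: by the subgroup--subalgebra correspondence \cite[Theorem 13.1]{Hum} used in the proof of Proposition \ref{p_char0torusbijection}, $\L$ is inclusion-preserving and injective on connected closed subgroups, so from $\L(T) \subseteq \d_{\mathcal{B}} = \L(D_{\mathcal{B}})$ one wants to conclude $T \subseteq D_{\mathcal{B}}$. This does not follow formally from injectivity alone, so instead I would argue that since $T$ is connected it is generated by the images of one-parameter subgroups $\exp(t\xi)$ for $\xi \in \L(T)$; each such $\xi$ is a diagonal matrix in $\d_{\mathcal{B}}$, so $\exp(t\xi) \in D_{\mathcal{B}}$ for all $t$, and therefore $T \subseteq D_{\mathcal{B}}$. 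Finally, $T \subseteq D_{\mathcal{B}}$ means every element of $T$ is diagonal with respect to $\mathcal{B}$, in particular diagonalizable with respect to $\mathcal{B}$, completing the proof.

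The main obstacle I anticipate is the converse direction, specifically justifying that a connected algebraic subgroup $T$ of $\mathrm{GL}(A)$ over an algebraically closed field of characteristic zero is generated (as an abstract, equivalently as an algebraic, group) by the one-parameter subgroups $t \mapsto \exp(t\xi)$ with $\xi \in \L(T)$, and that these exponentials genuinely land in $T$. The second point is standard (the image of the one-parameter subgroup with tangent direction $\xi \in \L(T)$ lies in $T$), and the first is a consequence of connectedness in characteristic zero — a connected algebraic group is generated by the images of its one-parameter additive subgroups through $\L(T)$, or one can instead invoke that $\exp$ maps a neighbourhood of $0$ in $\L(T)$ onto a neighbourhood of the identity in $T$ and that such a neighbourhood generates the connected group $T$. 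I would cite \cite[Chapter 10]{Mi} or \cite{Hum} for the exponential map and its compatibility with the inclusion $\L(T) \subseteq \gl(A)$. With that in hand the argument is short; the only care needed is to keep the bookkeeping straight between ``diagonal with respect to $\mathcal{B}$'' and ``diagonalizable with respect to $\mathcal{B}$'', noting that on the torus side we actually get the stronger ``diagonal'' conclusion, which of course implies ``diagonalizable''.
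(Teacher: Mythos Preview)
Your proposal is correct and takes essentially the same approach as the paper: both reduce the statement to showing $T \subseteq D_{\mathcal{B}}$ if and only if $\L(T) \subseteq \L(D_{\mathcal{B}})$ inside $\mathrm{GL}(A)$. The only difference is that the paper dispatches this equivalence by a direct citation to \cite[Theorem 12.5]{Hum} (the subgroup--subalgebra correspondence in characteristic zero), whereas you unwind it via the exponential map; your worry that injectivity alone is insufficient is exactly what that theorem addresses.
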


\begin{proof}
Let $D_{\mathcal{B}}(A)$ be the closed subgroup of ${\rm GL}(A)$ consisting of all $k$-linear automorphisms of $A$ (ignoring the algebra structure) which are diagonal with respect to $\mathcal{B}$. A standard calculation shows that $\L(D_{\mathcal{B}}(A))$ is the set of all $k$-linear endomorphisms of $A$ which are diagonal with respect to $\mathcal{B}$. So we must show that $T\subseteq D_{\mathcal{B}}(A)$ if and only if $\L(T)\subseteq \L(D_{\mathcal{B}}(A))$, and this is part of \cite[Theorem 12.5]{Hum}.
\end{proof}

\begin{remark}
There has been recent interest in investigating when $\HH^1(A)$ is a solvable Lie algebra \cite{RSS,LR,ER}. In characteristic zero the Lie algebra of a connected algebraic group is solvable if and only if the connected algebraic group is solvable \cite[Proposition 2.7]{TY}; in particular $\Out(A)^{\circ}$ is solvable if and only if $\HH^1(A)=\L(\Out(A)) $ is solvable; see \cite[Th\'eor\`eme 1.2.1.1]{Stra2} for this equality.

In the solvable context, by \cite[Theorem 16.33]{Mi}, we have that $\HH^1(A)$ decomposes as a semidirect product $\t\ltimes \n$ where $\t$ is a maximal torus and $\n$ is a nilpotent subalgebra of $\HH^1(A)$ (see  also \cite[Proposition 26.4.5]{TY}). 
Therefore, in characteristic zero we can generalize \cite[Proposition 2.7]{Le}: we obtain a semidirect product  decomposition of $\HH^1(A)$ when the 
underling graph of the Gabriel quiver of $A$ is simply directed \cite{LR,RSS} or  when $A$ is a quantum complete intersection with quantum parameters  $q_{ij} \neq 1$ for all $i < j$; see  \cite[Proposition 4.20]{RSS}.  Note that the Lie structure of $\HH^1(A)$ can also be non-solvable when $A$ is a quantum complete intersection \cite[Proposition 4.21]{RSS}.

\end{remark}

\subsection{Tori in positive characteristic}

In this subsection we assume that $k$ is an algebraically closed field of positive characteristic $p$. 
For further background on restricted Lie algebras see  \cite[Chapter 2]{SF}.

\begin{definition} 
Let $\g$ be a Lie algebra over $k$. The \emph{adjoint representation} of  $\g$ is the map $\mathrm{Ad}\colon \g \to\mathrm{End}_k( \g)$ defined by $\mathrm{Ad}(x)(y)=[x,y]$.

We say that $\g$ is  a 
\emph {restricted Lie algebra}  if there is a map $[p]:\g\to \g$, called the $p$-\emph{power operation}, such that:
\begin{itemize}
 \item $\mathrm{Ad}(x^{[p]})(y)= \mathrm{Ad}(x)^{p}(y)$
\item $(\lambda x)^{[p]}=\lambda^p x^{[p]}$
\item $(x+y)^{[p]}=x^{[p]}+y^{[p]} + \sum_{i=1}^{p-1} \frac{1}{i} {s_i(x,y)}$
\end{itemize}
 for all $x,y \in \g$ and $\lambda \in k$.  Here, the element $s_i(x,y)$ is the coefficient of $t^{i-1}$ in
\[
\mathrm{Ad}^{p-1}(tx+y)(x)=[tx+y,\dots, [tx+y,x]\dots].
\]
Suppose $(\g_1, [p]_1)$ and  $(\g_2, [p]_2)$ are two restricted Lie algebras over $k$. A homomorphism of Lie algebras $\phi\colon \g_1 \to \g_2$ is called \emph{restricted} if $\phi(x^{[p]_1})=\phi(x)^{[p]_2}$ for every $x\in \g_1$. In particular, a subalgebra $\h \subseteq \g$ is called a \emph{restricted subalgebra} if $x^{[p]}\in \h$ for every $x\in \h$, and a representation $\phi\colon \g \to \gl_n(k)$ is called \emph{restricted} if $\phi(x^{[p]})=\phi(x)^p$ for any $x\in \g$.
\end{definition}

An important example is the Lie algebra $\gl_n(k)$ of all $n\times n$ matrices over $k$; here the $p$-power operation takes a matrix $x$ to its $p$th power $x^p$. If $A$ is a finite dimensional algebra over $k$, then the $p$th power of any derivation is again a derivation, and this gives $\Der(A)$ the structure of a restricted Lie algebra. This passes to the quotient $\Der(A)/\Inn(A)$ to make $\HH^1(A)$ a restricted Lie algebra as well, cf.\ \cite{BR}.

\begin{definition}\label{charptorusdef}

Let $\g$ be a restricted Lie algebra over $k$. An element $x\in \g$ is called \emph{semisimple} if $x$ it belongs to the restricted subalgebra generated by $x^{[p]}$. We say $x$ if \emph{toral} if it satisfies  $x^{[p]}=x$. A restricted subalgebra $\t\subseteq \g$ is called \emph{torus} if 
\begin{itemize}
    \item $\t$ is abelian
    \item $x$ is semisimple for every $x\in\t$
\end{itemize}
\end{definition}

 \begin{definition}
The maximal toral rank of a restricted Lie algebra $\g$ is
\[
\mtrank(\g):=\max\{\dim \t ~:~ \t\subseteq  \g\text{ is a  torus}\}.
\]
In the literature $\mtrank(\g)$ is also denoted by $\MT(\g)$  \cite{Strade}, 
by $\mt(\g)$  \cite{BF}, and by $\mu(\g)$ \cite{Chang}.
\end{definition}

\begin{remark}
 Demuskin proved that if $\g$ is a restricted Lie algebra of Cartan type, then all maximal tori of $\g$ have the same dimension  \cite[Sec.~7.5]{Strade}. However, this is false for a general restricted Lie algebra.
\end{remark}

\begin{proposition}
\label{toralsemisimple} \label{psemisimple}
Let $\g$ be a restricted Lie algebra over $k$, and let $\phi\colon \g\to \gl_n(k)$ be any faithful $p$-representation. Then a $p$-subalgebra $\t\subseteq \g$ is a torus if and only if $\phi(\t)$ consists of simultaneously diagonalizable elements.
\end{proposition}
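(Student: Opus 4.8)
The plan is to reduce immediately to the case $\g=\gl_n(k)$ with $\phi=\mathrm{id}$. Since $\phi$ is a faithful $p$-representation it restricts to an isomorphism of restricted Lie algebras $\t\cong\phi(\t)$, where $\phi(\t)$ is a $p$-subalgebra of $\gl_n(k)$; an isomorphism of restricted Lie algebras takes tori to tori, because the conditions in Definition \ref{charptorusdef} are intrinsic, and the condition in the proposition only refers to $\phi(\t)$. So it suffices to prove that a $p$-subalgebra $\t\subseteq\gl_n(k)$ is a torus if and only if its elements are simultaneously diagonalizable, where now the $[p]$-operation is the ordinary $p$-th power of matrices.

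The heart of the matter is the assertion that \emph{an element $x\in\gl_n(k)$ is semisimple in the sense of Definition \ref{charptorusdef} if and only if $x$ is a diagonalizable matrix}. Note first that the matrices $x^p,x^{p^2},\dots$ pairwise commute, so the restricted subalgebra of $\gl_n(k)$ generated by $x^{[p]}=x^p$ is exactly $\mathrm{span}_k\{x^{p^i}:i\geq 1\}$; thus $x$ is semisimple precisely when $x\in\mathrm{span}_k\{x^{p^i}:i\geq 1\}$. To analyze this I would invoke the following elementary fact, which uses that $k$, being algebraically closed, is perfect: \emph{if $V$ is a finite-dimensional $k$-vector space and $\sigma\colon V\to V$ is additive with $\sigma(\lambda v)=\lambda^p\sigma(v)$ for all $\lambda\in k$, then $\sigma$ is injective if and only if it is surjective, and in that case $v\in\mathrm{span}_k\{\sigma^i(v):i\geq 1\}$ for every $v\in V$.} The first part is a dimension count using that $\lambda\mapsto\lambda^p$ is bijective on $k$; for the second part one observes that $W:=\mathrm{span}_k\{\sigma^i(v):i\geq 0\}$ is a finite-dimensional $\sigma$-stable subspace, so $\sigma|_W$ is injective, hence bijective, whence $v\in W=\sigma(W)=\mathrm{span}_k\{\sigma^i(v):i\geq 1\}$.

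I would apply this to $V=k[x]$ with $\sigma(r)=r^p$. If $x$ is diagonalizable, then $k[x]$ is reduced and $\sigma$ is injective on $k[x]$, hence bijective, so $x\in\mathrm{span}_k\{x^{p^i}:i\geq 1\}$ and $x$ is semisimple. Conversely, if $x$ is semisimple then $x\in\mathrm{span}_k\{x^{p^i}:i\geq 1\}\subseteq k[x^p]$, so $k[x]=k[x^p]$; using perfectness of $k$ one has $k[x^p]=\{r^p:r\in k[x]\}=\sigma(k[x])$, so $\sigma$ is surjective, hence injective, which forces $k[x]$ to be reduced and hence $x$ to be diagonalizable. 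Granting this characterization, the proposition follows quickly: if $\t$ is a torus then it is abelian, so its elements commute, and each is semisimple hence diagonalizable, and a commuting family of diagonalizable matrices is simultaneously diagonalizable; conversely, if the elements of $\t$ are simultaneously diagonalizable then $\t$ lies in the abelian algebra of matrices that are diagonal in a fixed basis, so $\t$ is abelian and each of its elements is a diagonalizable matrix hence semisimple, so $\t$---being a $p$-subalgebra by hypothesis---is a torus.

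The step I expect to require the most care is this characterization of the semisimple elements of $\gl_n(k)$, i.e.\ showing that Definition \ref{charptorusdef}'s notion of semisimplicity coincides with ordinary diagonalizability of matrices; translating it into surjectivity and injectivity of the $p$-power endomorphism of $k[x]$ is where the hypothesis that $k$ is algebraically closed (hence perfect) is genuinely used. The rest of the argument is formal.
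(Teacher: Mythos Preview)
Your proof is correct and follows the same route as the paper: reduce to $\gl_n(k)$ via faithfulness of $\phi$, identify semisimplicity in the restricted sense with diagonalizability of matrices, and then pass between ``abelian with diagonalizable elements'' and ``simultaneously diagonalizable'' in the standard way. The only difference is that the paper cites \cite[Proposition 3.3]{SF} for the key characterization of semisimple elements of $\gl_n(k)$, whereas you supply a self-contained argument via the $p$-semilinear Frobenius on $k[x]$.
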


\begin{proof}
Since $\phi$ is injective, it follows from definition \ref{charptorusdef} that  $\t\subseteq \g$ is a torus if and only if $\phi(\t)\subseteq \gl_n(k)$ is a torus. By \cite[Proposition 3.3]{SF} a matrix $x\in \gl_n(k)$ is semisimple in the restricted Lie algebra $(\gl_n(k),p)$ if and only if $x$ is diagonalizable. Thus $\phi(\t)$ is a torus exactly when $[\phi(\t),\phi(\t)]=0$ and each $x\in \phi(\t)$ is diagonalizable. This in turn holds if and only if the elements of $\phi(\t)$ are simultaneously diagonalizable.
\end{proof}

\begin{proposition}[\cite{SF}]
\label{t_charpsurjection}
Let $\phi\colon \g \to \g'$ be a a surjective, restricted homomorphism of finite dimensional restricted Lie algebras over $k$.
\begin{itemize}
    \item If $\t$ is a maximal torus of $\g$, then $\phi(\t)$ is a maximal torus of $\g'$.
    \item If $\t'$ is a maximal torus of $\g'$ and $\t$ is a maximal torus of $\phi^{-1}(\t')$, then $\t$ is a maximal torus of $\g$.
\end{itemize}
\end{proposition}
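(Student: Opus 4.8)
The plan is to follow the pattern of the characteristic-zero statement Proposition \ref{p_char0torussurjection}, reducing both bullets to a single lifting lemma for tori along a restricted surjection. Throughout, write $\mathfrak{c}=\ker\phi$ (a restricted ideal), so that $\g'=\g/\mathfrak{c}$, and write $\langle\,\cdot\,\rangle_p$ for the restricted subalgebra generated by a set.

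I would first record the easy half: the image of any torus is a torus. If $\t\subseteq\g$ is a torus then $\phi(\t)$ is abelian, and a restricted homomorphism sends semisimple elements to semisimple elements (if $x\in\langle x^{[p]}\rangle_p$ then $\phi(x)\in\langle\phi(x)^{[p]}\rangle_p$), so every element of $\phi(\t)$ is semisimple. The heart of the argument is then the following \emph{lifting lemma}: if $\t\subseteq\g$ is a torus and $\bar{\mathfrak{s}}\subseteq\g'$ is a torus with $\phi(\t)\subseteq\bar{\mathfrak{s}}$, then there is a torus $\mathfrak{s}\subseteq\g$ with $\t\subseteq\mathfrak{s}$ and $\phi(\mathfrak{s})=\bar{\mathfrak{s}}$. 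I would prove this by induction on $\dim\bar{\mathfrak{s}}-\dim\phi(\t)$, the case $0$ being trivial. For the inductive step: a torus properly containing a subtorus always contains a toral element outside it --- the key input being the restricted-Lie-algebra form of Lang's theorem, that $x\mapsto x^{[p]}-x$ is surjective on any torus over the algebraically closed field $k$ --- so pick a toral $\bar t\in\bar{\mathfrak{s}}\smallsetminus\phi(\t)$. Since $\bar{\mathfrak{s}}$ is abelian, $\bar t\in C_{\g'}(\phi(\t))$; and since the torus $\t$ acts semisimply on $\g$ under the restricted adjoint action (toral elements act with minimal polynomial dividing $X^p-X$), the surjection $\phi$ respects $\t$-weight decompositions and is onto each weight space, so $C_{\g'}(\phi(\t))=\phi(C_\g(\t))$. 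Thus $\bar t$ lifts to some $x\in C_\g(\t)$, and replacing $x$ by its semisimple part in the Jordan--Chevalley decomposition of the abelian restricted subalgebra $\langle x\rangle_p\subseteq C_\g(\t)$ gives a semisimple $x_s\in C_\g(\t)$ with $\phi(x_s)=\bar t$. Then $\mathfrak{t}_1:=\t+\langle x_s\rangle_p$ is again a torus (a sum of commuting semisimple elements is semisimple, by Proposition \ref{psemisimple} applied to a faithful $p$-representation), contains $\t$, and satisfies $\phi(\mathfrak{t}_1)\subseteq\bar{\mathfrak{s}}$ with $\dim\phi(\mathfrak{t}_1)=\dim\phi(\t)+1$; the inductive hypothesis applied to $\mathfrak{t}_1$ finishes the lemma.

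Granting the lemma, both bullets are formal. For the first: if $\t$ is maximal in $\g$ but $\phi(\t)\subsetneq\bar{\mathfrak{s}}$ for some torus $\bar{\mathfrak{s}}$ of $\g'$, the lemma gives a torus $\mathfrak{s}\supseteq\t$ of $\g$ with $\phi(\mathfrak{s})=\bar{\mathfrak{s}}\supsetneq\phi(\t)$, hence $\mathfrak{s}\supsetneq\t$, contradicting maximality; so $\phi(\t)$ is maximal. For the second: applying the lemma to the restricted surjection $\phi^{-1}(\t')\to\t'$ shows that a maximal torus $\t$ of $\phi^{-1}(\t')$ must have $\phi(\t)=\t'$ (otherwise $\t$ could be enlarged inside $\phi^{-1}(\t')$); then if $\t\subseteq\tilde{\t}$ for a torus $\tilde{\t}$ of $\g$, the torus $\phi(\tilde{\t})$ contains $\phi(\t)=\t'$, so $\phi(\tilde{\t})=\t'$ by maximality of $\t'$, whence $\tilde{\t}\subseteq\phi^{-1}(\t')$ and $\tilde{\t}=\t$ by maximality of $\t$ there.

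The main obstacle is the lifting lemma. In characteristic zero the corresponding fact rests on the ambient algebraic group (conjugacy and rigidity of maximal tori); here there is no such ambient structure, and one must construct the enlargement of $\t$ directly, which is precisely why the semisimple-action and weight-space machinery together with the Jordan--Chevalley decomposition for restricted Lie algebras --- all developed in \cite[Chapter 2]{SF} --- carry the weight of the proof.
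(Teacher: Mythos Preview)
The paper does not prove this proposition; it is stated with the citation \cite{SF} (Strade--Farnsteiner) and no proof is given in the text. So there is nothing to compare against in the paper itself.

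Your argument is correct and is essentially the standard proof one finds in that reference. A few minor remarks: the appeal to ``Lang's theorem'' is a slightly unusual way to phrase what is really the classical fact that a torus over an algebraically closed field has a basis of toral elements (this is how \cite[Chapter~2]{SF} states it); from that, the existence of a toral $\bar t\in\bar{\mathfrak{s}}\smallsetminus\phi(\t)$ is immediate. The verification that $\phi(x_s)=\bar t$ deserves one line: since restricted homomorphisms preserve Jordan--Chevalley parts, $\phi(x)=\phi(x_s)+\phi(x_n)$ is the decomposition of the semisimple element $\bar t$, forcing $\phi(x_n)=0$. Finally, for the claim that $\t_1=\t+\langle x_s\rangle_p$ is a torus you invoke Proposition~\ref{psemisimple} via a faithful $p$-representation; such a representation always exists for finite-dimensional restricted Lie algebras (Iwasawa's theorem), which you are implicitly using. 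With these small clarifications your proof is complete.
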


Later we will also discuss examples of $p$-nilpotent restricted Lie algebras:

\begin {definition}
A restricted Lie algebra $\g$ is \emph{$p$-nilpotent} if every element $x$ is $p$-nilpotent, that is, if there exists an integer $n$ such that $x^{[p]^n}=0$, for all $x \in \g$. 
\end{definition}

\section{Tori and the fundamental group}
\label{s_toripi}

Let $A$ be a finite dimensional $k$-algebra with a presentation $\nu\colon kQ\to A$, and write $I=\ker(\nu)$. Assume that $Q$ is connected. Denote by $e_1,...,e_n$ the vertices of $Q$,  
with  $e_1$ as our chosen base point, so that $\pi_1(Q,I)=\pi_1(Q,I,e_1)$ by definition. As above the group of additive characters on $\pi_1(Q,I)$ will be written $\pi_1(Q,I)^\vee=\Hom(\pi_1(Q,I),k^+)$.

We start this section by recalling a construction from \cite[3.2]{AP}. We first need to chose for each $e_i$ a walk $w_i$ from
$e_1$ to $e_i$, with $w_1$ being the trivial walk at $e_1$, 
and we write $W=\{w_i\}$ (in \cite{FGM} these are called ``parade data''). With this we define the map
\[
\theta_{\nu,W}\colon\pi_1(Q,I)^\vee\longrightarrow \Der(A)
\]
by the rule $\theta_{\nu,W}(f)(\nu(p))=f(w^{-1}_jpw_i)\nu(p)$, where $p$ is a path from $e_i$ to $e_j$; see \cite[Section 3]{DS} for further details. According to \cite[Corollary 3]{DS} the induced map
\[
\theta_\nu\colon\pi_1(Q,I)^\vee\longrightarrow \HH^1(A)
\]
does not depend on $W$. If $Q$ is not connected, then following the convention in Definition \ref{homotopydef}, $\theta_\nu$ is the sum of the corresponding maps defined by restricting to each of the connected components of $Q$.

The derivation $\theta_{\nu,W}(f)$ is by definition diagonal with respect to the presentation $\nu$, and therefore the image of $\theta_\nu$ is a torus in
$\HH^1(A)$. The next result is a converse to this fact.

Farkas, Green and Marcos prove that \emph{certain} diagonalizable derivations are always in the image of some $\theta_{\nu}$; see \cite[Theorem 3.2]{FGM}. A similar result is proven by de  la  Pe\~na  and Saor\'in \cite[Lemma 3]{DS}, but using a different definition of the fundamental group. In the case that $A$ is a triangular algebra, that is, $Q$ does not contain directed cycles, Le Meur proves in \cite[Proposition 2.6]{Le} that 
the map $\theta_{\nu}$ is surjective onto the diagonalizable subalgebra $\t_\nu^{\HH}$ of $\HH^1(A)$. 
The following theorem generalizes these results.

\begin{theorem}
\label{maxdiag}
Let $A$ be a finite dimensional algebra over an algebraically closed field $k$. Every maximal torus of $\HH^1(A)$ is of the form $\Im (\theta_{\nu})$ for some presentation $\nu\colon kQ \to A$.
\end{theorem}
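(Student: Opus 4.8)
The plan is to combine Proposition \ref{p_diagtorus} with a careful analysis of the map $\theta_{\nu,W}$. By Proposition \ref{p_diagtorus} we already know that every maximal torus $\t$ of $\HH^1(A)$ is of the form $\t_\nu^{\rm HH}$ for some presentation $\nu\colon kQ\to A$. So the real content is the identity $\t_\nu^{\rm HH}=\Im(\theta_\nu)$; once this is established the theorem follows immediately. We know one inclusion for free: $\theta_{\nu,W}(f)$ is diagonal with respect to $\nu$ by construction, so $\Im(\theta_\nu)\subseteq \t_\nu^{\rm HH}$. Thus the heart of the matter is to show $\t_\nu^{\rm HH}\subseteq \Im(\theta_\nu)$, i.e.\ that every derivation diagonal with respect to $\nu$ is, modulo inner derivations, of the form $\theta_{\nu,W}(f)$ for some additive character $f$ on $\pi_1(Q,I)$.

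The key step is to turn a derivation $\delta\in\t_\nu^{\rm Der}$ into a well-defined additive character on $\pi_1(Q,I)$. Since $\delta$ is diagonal with respect to the basis of paths coming from $\nu$, for each arrow $\alpha\in Q_1$ there is a scalar $\lambda_\alpha\in k$ with $\delta(\nu(\alpha))=\lambda_\alpha\nu(\alpha)$. Extend this to walks in $\overline{Q}$ by setting $\lambda_{\alpha^{-1}}=-\lambda_\alpha$ and $\lambda_{w}=\sum_i \lambda_{\beta_i}$ for a walk $w=\beta_s\cdots\beta_1$; this defines an additive map on the free group of walks based at $e_1$. First I would normalize using an inner derivation: choosing walks $w_i$ from $e_1$ to $e_i$ and conjugating $\delta$ by a suitable semisimple element of $A$ (built from the $w_i$ and the $\lambda$'s) one may assume $\lambda_{w_i}=0$, which is exactly the compatibility needed for $f(w):=\lambda_w$ to be the character attached to $\delta$ via $\theta_{\nu,W}$. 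Then I must check that $f$ descends to $\pi_1(Q,I)$, i.e.\ vanishes on the homotopy relations (\ref{hrel1})--(\ref{hrel3}) of Definition \ref{homotopydef}: relations (\ref{hrel1}) and (\ref{hrel2}) are automatic from additivity and the sign convention, while relation (\ref{hrel3}) is the crucial one — it requires that two paths $p,p'$ occurring in a common minimal relation satisfy $\lambda_p=\lambda_{p'}$. This is where $\delta$ being an honest derivation on $A$ (not just on $kQ$) enters: applying $\delta$ to the minimal relation $r=\sum a_ip_i\in I$ gives $0=\delta(\nu(r))=\sum a_i\lambda_{p_i}\nu(p_i)$, and minimality of $r$ forces all the $\lambda_{p_i}$ to coincide. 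Having produced $f\in\pi_1(Q,I)^\vee$, one checks directly from the defining formula that $\theta_{\nu,W}(f)$ and $\delta$ agree on each $\nu(\alpha)$, hence (both being derivations) on all of $A$, so $[\delta]=\theta_\nu(f)$.

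The main obstacle I anticipate is the normalization step and the passage between $\Der_0(A)$, $\t_\nu^{\rm Der}$, and inner derivations: one has to be careful that the inner derivation used to arrange $\lambda_{w_i}=0$ is itself diagonal with respect to $\nu$ (so that one stays inside $\t_\nu^{\rm Der}$ and does not change the class in $\t_\nu^{\rm HH}$), and that the base-point and choice of $W$ do not matter — the latter is handled by the cited fact \cite[Corollary 3]{DS} that $\theta_\nu$ is independent of $W$. A secondary subtlety, invisible in the admissible case but essential here, is that $\nu$ need not be admissible, so $\delta$ need not preserve the radical and the "paths" forming a basis of $A$ may have length bounded in a way that depends on $I$; however, the argument above only uses that $A$ has a basis of paths and that $\delta$ acts diagonally on it, both of which were secured by Lemma \ref{l_diag_pres}, so nonadmissibility causes no real trouble. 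Finally, for the disconnected case one reduces to connected components exactly as in the definitions of $\pi_1(Q,I)$ and $\theta_\nu$.
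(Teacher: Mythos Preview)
Your overall strategy coincides with the paper's: reduce via Proposition~\ref{p_diagtorus} to proving $\t_\nu^{\rm HH}=\Im(\theta_\nu)$, build from a diagonal $\delta$ an additive function $f$ on walks via the eigenvalues $\lambda_\alpha$, verify the three homotopy relations, and identify the inner correction. The paper handles the inner correction in the opposite order---it does not normalize first but instead computes at the end that $\theta_{\nu,W}(f)=\delta-[e_W,-]$ with $e_W=\sum_i f(w_i)e_i$; this sidesteps your stated worry about staying inside $\t_\nu^{\rm Der}$, though in fact $[e_W,-]$ \emph{is} diagonal with respect to $\nu$, so your normalization would also work.

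There is, however, a genuine gap in your verification of relation~(\ref{hrel3}). From $r=\sum a_ip_i\in I$ you correctly obtain $\sum a_i\lambda_{p_i}p_i\in I$, but the assertion that ``minimality of $r$ forces all the $\lambda_{p_i}$ to coincide'' does not follow from this single application of $\delta$. Minimality says only that no proper subsum $\sum_{i\in T}a_ip_i$ with the \emph{original} coefficients $a_i$ lies in $I$; the element $\sum a_i\lambda_{p_i}p_i$, or any combination $\sum a_i(\lambda_{p_i}-c)p_i$, is not of this form when more than two distinct eigenvalues occur, so minimality cannot be invoked directly. The paper closes this gap by lifting $\delta$ to a derivation of $kQ$ preserving $I$ and then observing that every polynomial $F(\delta)$ also preserves $I$; choosing $F$ by Lagrange interpolation so that $F$ takes the value $1$ on one eigenvalue and $0$ on the others, the element $F(\delta)(r)$ is exactly a proper subsum $\sum_{i\in T}a_ip_i\in I$, contradicting minimality. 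Equivalently, one must use the full eigenspace projector, not a single application of $\delta$. With this step inserted, your argument is complete and matches the paper's.
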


\begin{proof}
If $A$ splits into a direct product of connected algebras $A_1\times \cdots \times A_\ell$, then $\HH^1(A)=\HH^1(A_1)\times \cdots\times \HH^1( A_\ell)$, 
and $\theta_\nu$ respects this decomposition, so we may assume that $Q$ is connected. 

By Proposition \ref{p_diagtorus} every maximal torus is of the form $\t_\nu^{\rm HH}$ for some presentation $\nu\colon kQ \to A$. Hence it suffices to show that $\Im (\theta_{\nu})= \t_\nu^{\HH}$. At the level of derivations, it is clear that $\Im (\theta_{\nu,W})\subseteq \t_\nu^{\Der}$, and we must show conversely that 
$ \t_\nu^{\Der}\subseteq \Im (\theta_{\nu,W})+ \mathrm{Inn}(A)$. 
For any diagonal derivation $\delta\in \t_\nu^{\Der}$ we will explicitly produce a map $f\colon \pi_1(Q,I)\to k$ for which $\theta_{\nu,W}(f)=\delta$ modulo $\mathrm{Inn}(A)$.

We first define $f$ on the set of all walks as follows: Let $p$ be a walk involving arrows $\alpha_1,...,\alpha_n$ and formal inverses of arrows $\beta_1^{-1},...,\beta_m^{-1}$. Since $\delta $ is diagonal, there are scalars $a_1,...,a_s$ and $b_1,...,b_t$ such that  $\delta(\alpha_i)=a_i\alpha_i$ and $\delta(\beta_j) =b_j\beta_j$ for each $i$ and $j$. With this setup we define
\[
f(p)=a_1+\cdots +a_s - b_1-\cdots -b_t.
\]
For a trivial walk $e_i$ involving no arrows this means $f(e_i)=0$. We can think of $f(p)$ as the signed trace of $\delta$ on $p$.

\medskip
\noindent
{\bf Claim 1.} $f$ is well-defined on $\pi_1(Q,I)$.
\medskip

We must check that $f$ respects the homotopy equivalence relation that defines $\pi_1(Q,I)$; this amounts to compatibility with the three conditions (\ref{hrel1}), (\ref{hrel2}) and (\ref{hrel3}) of Definition \ref{homotopydef}.

The first two conditions are straightforward to check. In the notation of (\ref{hrel1}), we have $f(\alpha^{-1} \alpha)=-f(\alpha)+f( \alpha)=0=f(e_i)$ and $f(\alpha \alpha^{-1})=f(\alpha)-f( \alpha)=0=f(e_j)$ as required. And in the notation of (\ref{hrel2}), if we assume $f(v)=f(v')$ then we have $f(wvu)=f(w)+f(v)+f(u)=f(w)+f(v')+f(u)=f(wv'u)$ as required.

To check compatibility with (\ref{hrel3}), let $r=\sum_{i=1}^{m} a_ip_i$ be a minimal relation in $I$, with all $a_i\neq 0$. Since  $p_i\sim_Ip_j$ for each pair of paths appearing in $r$, we must prove that $f(p_i)=f(p_j)$.  Since $\delta$ is diagonal with respect to $\nu$, there are scalars $c_i$ such that $\delta(p_i)=c_ip_i$ for each $i$, and from the definition of $f$ we have $f(p_i)=c_i$. Hence we must show $c_i=c_j$ for each pair $i,j$. Assume towards a contradiction that this is not the case, and without loss of generality say $c_1,...,c_{m'}$ are all equal and  $c_{m'+1},...,c_m$ are all different from  $c_1$, for some $m'<m$. By Lagrange interpolation we can find a polynomial $F(x)\in k[x]$ 
such that $F(c_i)=1$ for 
$i=1, ..., m^{\ensuremath{\prime}}$ and 
$F(c_i)=0$ for $i=m^{\ensuremath{\prime}}+1, ..., m$.  Regarding $\delta$ as a derivation on $kQ$ that preserves $I$, it 
follows that $\delta^n(I)\subseteq I$ for all $n\geq 0$. This implies  
that $F(\delta)$ is a linear endomorphism of $kQ$ such that $F(\delta)(I)\subseteq I$. 
In particular, we have 
$F(\delta)(r)\in I$ and
\[
F(\delta)(r)=F(\delta)\left(\sum a_ip_i\right)=\sum a_i F(c_i)p_i\\
=\sum_{k=1}^{m'} a_ip_i \in I.
\]
But this means $r$ is not a minimal relation, a contradiction.

This completes the proof that $f$ descends to a well-defined homomorphism $\pi_1(Q,I)\to k^+$. It remains to show

\medskip
\noindent
{\bf Claim 2.} $\theta_{\nu,W}(f)= \delta$ modulo an inner derivation.
\medskip

Recall that we have fixed a set $W=\{w_i\}$ of walks $w_i$ from $e_1$ to $e_i$. Define  $e_W=\sum_{i=1}^n f(w_i)e_i$. Then for any path $p$ from $e_i$ to $e_j$ we have
\begin{align*}
\theta_{\nu,W}(f)(p) & =f(w^{-1}_jpw_i)p\\
& = (-f(w_j)+f(p)+f(w_i))p \\
& = \delta(p)+f(w_i)p-f(w_j)p \\
& = \delta (p)-[e_W,p]
\end{align*}
Hence we have $\theta_{\nu,W}(f)=\delta-[e_W,-]$.
\end{proof}

Removing redundant generators from a presentation $\nu\colon kQ\to A$ can only make $\t^{\rm HH}_\nu$ larger. So, in lieu of admissibility, we say $\nu$ is \emph{minimal} if no proper subset of $Q_0\cup Q_1$ generates $A$. If $A$ is a basic split $k$-algebra and $Q_0$ is a complete set of primitive orthogonal idempotents for $A$, then minimality implies that $Q$ is isomorphic to the Gabriel quiver of $A$.

\begin{definition}
\label{pirank}
For a finite dimensional connected $k$-algebra $A$, we set
\[
\pirank(A)=\max\{\dim_k\pi_1(Q,I)^\vee ~:~ A\cong kQ/I \textrm{ a minimal presentation} \}.
\]
If $A$ is not connected, then it splits into a direct product of connected algebras $A=A_1\times \cdots \times A_\ell$ and we set $\pirank(A)=\pirank(A_1)+\cdots +\pirank(A_\ell)$ by convention.
\end{definition}

Note that in characteristic zero $\dim_k\pi_1(Q,I)^\vee$ is equal to the rank of the abelianization $\pi_1(Q,I)^{\rm ab}$, and if $k$ has positive characteristic $p$ then instead  $\dim_k\pi_1(Q,I)^\vee$ is equal to $\rank \pi_1(Q,I)^{\rm ab}+ p\text{-}\!\rank\pi_1(Q,I)^{\rm ab}$. Thus, in general algebras of positive characteristic will have larger $\pirank$.

We have come to our main motivation for studying the fundamental group: even though it depends on a presentation, we can use Theorem \ref{maxdiag} to produce derived (and stable) invariants by considering all presentations.

\begin{corollary}\label{derinv}
For any finite dimensional algebra $A$ over an algebraically closed field $k$, we have 
\[
\mtrank(\HH^1(A))=\pirank(A).
\]
In particular, $\pirank(A)$ is a derived invariant. Among algebras which are further assumed self-injective, $\pirank(A)$ is also invariant up to stable equivalence of Morita type.
\end{corollary}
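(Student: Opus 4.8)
The equality $\mtrank(\HH^1(A))=\pirank(A)$ carries all the content; given it, both invariance statements follow formally. To prove the equality I would play two descriptions of maximal tori against each other: Proposition~\ref{p_diagtorus} writes every maximal torus of $\HH^1(A)$ as $\t_\nu^{\rm HH}$ for some presentation $\nu\colon kQ\to A$, while Theorem~\ref{maxdiag} (whose proof in fact applies to \emph{any} connected presentation) identifies $\t_\nu^{\rm HH}$ with the image of $\theta_\nu\colon\pi_1(Q,I)^\vee\to\HH^1(A)$. First I would reduce to the connected case: if $A=A_1\times\cdots\times A_\ell$ with each $A_i$ connected, then $\HH^1(A)=\prod_i\HH^1(A_i)$, compatibly with all the relevant structure; $\mtrank$ is additive over finite direct products (tori project to tori under the coordinate projections, and a direct product of tori is a torus), $\pirank$ is additive by Definition~\ref{pirank}, and $\theta_\nu$, together with the groups $\pi_1$, respect the decomposition (for minimal presentations $Q$ splits along the factors). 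So I may assume $A$, hence the quiver of any minimal presentation, is connected.

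For the inequality $\mtrank(\HH^1(A))\le\pirank(A)$, choose a maximal torus $\t\subseteq\HH^1(A)$ with $\dim_k\t=\mtrank(\HH^1(A))$. By Proposition~\ref{p_diagtorus} we have $\t=\t_\nu^{\rm HH}$ for some presentation $\nu$; deleting redundant generators can only enlarge $\t_\nu^{\rm HH}$ (as observed just after Theorem~\ref{maxdiag}), and $\t$ is already maximal, so we may take $\nu$ minimal without changing $\t$. By Theorem~\ref{maxdiag}, $\t=\t_\nu^{\rm HH}=\Im(\theta_\nu)$, so $\mtrank(\HH^1(A))=\dim_k\Im(\theta_\nu)\le\dim_k\pi_1(Q,I)^\vee\le\pirank(A)$, the first inequality because $\Im(\theta_\nu)$ is a linear image of $\pi_1(Q,I)^\vee$ and the last by Definition~\ref{pirank}.

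For the reverse inequality, fix a minimal presentation $\nu\colon kQ\to A$ with $\dim_k\pi_1(Q,I)^\vee=\pirank(A)$. The image $\Im(\theta_\nu)$ is a torus in $\HH^1(A)$, so $\dim_k\Im(\theta_\nu)\le\mtrank(\HH^1(A))$; and $\dim_k\Im(\theta_\nu)=\dim_k\pi_1(Q,I)^\vee$ provided $\theta_\nu$ is \emph{injective}, which then gives $\pirank(A)\le\mtrank(\HH^1(A))$ and finishes the proof. To see injectivity, suppose $f\in\pi_1(Q,I)^\vee$ has $\theta_{\nu,W}(f)=[c,-]$ for some $c\in A$; since $\theta_{\nu,W}(f)$ annihilates every idempotent $e_i$, the element $c$ commutes with all $e_i$, and --- after replacing $c$ by its semisimple part (relevant only in the non-admissible case, where $\nu(Q_1)$ need not lie in the radical of $A$) --- a telescoping computation along loops based at $e_1$, as in Claim~2 of the proof of Theorem~\ref{maxdiag}, forces $f$ to vanish on every such loop, so $f=0$. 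This injectivity of $\theta_\nu$ (cf.\ \cite{AP,DS}) is the only step here that is not pure bookkeeping, and it is the main obstacle; the non-admissible, non-basic case is where the care is needed. Combining the two inequalities yields $\mtrank(\HH^1(A))=\pirank(A)$.

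Finally, the invariance. The quantity $\mtrank(\HH^1(A))$ is determined by $\HH^1(A)$ equipped with its natural extra structure: its restricted Lie algebra structure when $\chr k=p$ (Definition~\ref{charptorusdef}), and its structure as the Lie algebra $\L(\Out(A))$ of an algebraic group when $\chr k=0$ (Definitions~\ref{char0torusdef}--\ref{char0Lietorusdef}, via the isomorphism $\L(\Out(A))\cong\HH^1(A)$ of \cite{Stra2}). By the invariance properties of Hochschild cohomology recalled in the Introduction, this structure is preserved under derived equivalence, so $\mtrank(\HH^1(A))$ --- and hence, by the equality just proved, $\pirank(A)$ --- is a derived invariant. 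When $A$ is self-injective the same structure is invariant under stable equivalences of Morita type, which gives the final assertion.
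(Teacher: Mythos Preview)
Your proof follows essentially the same route as the paper's: both arguments combine Theorem~\ref{maxdiag} (every maximal torus is an $\Im(\theta_\nu)$), the injectivity of $\theta_\nu$ for minimal presentations, and the known invariance of the relevant structure on $\HH^1(A)$ (the restricted Lie structure in positive characteristic via \cite{BR}, and $\Out(A)^\circ$ in characteristic zero via \cite{HZS,R}). The paper is terser---it simply cites \cite{FGM,DS} for injectivity and notes their proofs extend to minimal presentations---whereas you split the equality into two inequalities and spell out the reduction to minimal presentations, but the content is the same.

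One caution on your injectivity sketch: the phrase ``replacing $c$ by its semisimple part'' is not well-defined for an element of a general finite dimensional algebra $A$, and the Jordan decomposition of the derivation $[c,-]$ in $\gl(A)$ need not again be inner. This step, as written, does not go through. Since you already cite \cite{DS} (the paper uses \cite[Corollary~3]{DS} and \cite[Theorem~2.1]{FGM}) and correctly flag injectivity as the nontrivial input, this is not a gap in your overall argument, but the sketch itself should be dropped or replaced by the literature citation.
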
 

\begin{proof}
Theorem \ref{maxdiag} implies that $\mtrank(\HH^1(A))$ is the maximum of $\dim_k\Im(\theta_\nu)$ over all presentations $\nu\colon kQ\to A$. According to \cite[Theorem 2.1]{FGM} or \cite[Corollary 3]{DS}, the map $\theta_\nu$ is injective---these references work with admissible presentations, but the proofs work for minimal presentations. Therefore, the maximum of $\dim_k\Im(\theta_\nu)$ is equal to $\pirank(A)$.

In characteristic zero, the derived invariance of $\mtrank(\HH^1(A))$ follows from the fact that $\Out(A)^{\circ}$ is a derived invariant of $A$; see \cite[Theorem 17]{HZS} or  \cite[Th\'eor\`eme 4.2]{R}. In positive characteristic, derived invariance follows from 
\cite[Corollary 2]{BR}, since   $\mtrank(\HH^1(A))$ depends only on its restricted Lie algebra structure.

Similarly, among self-injective algebras $\Out(A)^\circ$ is invariant under stable equivalences of Morita type \cite[Th\'eor\`eme 4.3]{R}, so  $\mtrank (\HH^1(A))=\mtrank \L(\Out(A))$ is as well in characteristic zero, and in positive characteristic the desired statement follows from \cite[Theorem 1]{BR}.
\end{proof}

\begin{remark}
In \cite{CRSO} an intrinsic version of the fundamental group is associated to a basic algebra $A$. We have not investigated the relation between  $\pirank(A)$ as defined in this paper and rank of the intrinsic fundamental group defined in \cite{CRSO}. However, these two numbers do not coincide in general, since in the case of a Kronecker quiver, a maximal fundamental group has rank $1$ while the intrinsic fundamental group is trivial; see \cite[Proposition 36]{CRSO2}.
\end{remark}

\begin{example}\label{e_JW} 
Let $k$ be a field of characteristic $p$ and consider the algebra $A=k[x]/(x^p)$. The Lie algebra $\HH^1(A)$ was computed in  \cite{Jac}: it admits a basis $\{\partial_x,x\partial_x,...,x^{p-1}\partial_x\}$ where $\partial_x$ is the unique $k$-linear derivation such that $\partial_x(x)=1$, and the Lie bracket and $p$-power operation are
\[
[x^i\partial_x,x^j\partial_x]=(j-i)x^{i+j-1}\partial_x,
\quad \quad  
(x^i\partial_x)^{[p]}=\begin{cases}x\partial_x & \text{if }i=1\\ 0 & \text{otherwise.}
\end{cases}
\]
This is known as the Jacobson-Witt Lie algebra; it is simple whenever $p>2$  by \cite[Theorem 1]{Jac}. 
Note that the derivation $x\partial_x$ is $p$-idempotent. More generally, an element $f=a_0+a_1 x+\dots + a_{p-1}x^{p-1}$ is a $k$-algebra generator for $A$ exactly when $a_1\neq 0$, in which case $f'=\partial_x(f)$ is invertible, and we consider the corresponding derivation $f\partial_f=f(f')^{-1}\partial_x$. This derivation is $p$-idempotent since it sends $f$ to $f$, and in particular ${\rm span}\{f\partial_f\}$ is a torus.

Let $Q$ be the quiver with one vertex and one arrow $u$, so that $kQ=k[u]$, and let $\nu_f\colon kQ\to A$ be the minimal presentation sending $u$ to $f$. Note that these presentations are non-admissible unless $a_0=0$. The kernel of $\nu_f$ is $I_f=(u^p-a_0^p)$, and one can compute \[
\pi_1(Q,I_f)=
\begin{cases}
\mathbb{Z} & a_0=0\\
\mathbb{Z}/p\mathbb{Z} & a_0\neq 0.
\end{cases}
\]
Regardless of $a_0$, this means $\pi_1(Q,I_f)^\vee = k$ is one-dimensional, generated by a class dual to the loop $u$. Finally, the definition of $\theta_{\nu_a}$ gives $\Im(\theta_{\nu_f})={\rm span}\{f\partial_f\}$, and we see every nonzero torus is generated by some $f\partial_f$, with almost all of them coming from non-admissible presentations.
\end{example}

\section{Applications}
\label{s_examples}

The first Hochschild cohomology group provides a bridge between finite dimensional algebras and Lie algebras. 
In this section, we consider some families of finite dimensional algebras and we ask some question on both sides of this bridge. As a consequence, we obtain some well-known theorems and some new results which a priori are not related with the Lie structure of the first Hochschild cohomology.

One natural question to ask is the following: what is the relation between  $\HH^1(A)$ and the first Betti number  $\beta_1(Q)$ of a finite dimensional algebra $A$ with  Gabriel quiver $Q$?

\begin{theorem}
\label{rankbetti}\label{monomialbetti}
Let $A$ be a finite dimensional algebra over an algebraically 
closed field $k$, with Gabriel quiver $Q$, then \[
\mtrank(\HH^1(A))\leq \beta_1(Q).
\]
If $A$ is a monomial (or semimonomial) algebra then equality holds.
\end{theorem}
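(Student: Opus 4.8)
The plan is to combine Theorem~\ref{maxdiag} (really its consequence Corollary~\ref{derinv}) with the purely combinatorial Lemma~\ref{ineq}. By Corollary~\ref{derinv} we have $\mtrank(\HH^1(A))=\pirank(A)$, and by definition $\pirank(A)$ is the maximum of $\dim_k\pi_1(Q',I')^\vee$ over all minimal presentations $A\cong kQ'/I'$. Lemma~\ref{ineq} tells us that $\dim_k\pi_1(Q',I')^\vee\leq \beta_1(Q')$ for every such presentation. So the first task is to argue that $\beta_1(Q')\leq \beta_1(Q)$ whenever $Q'$ is the quiver of a minimal presentation and $Q$ is the Gabriel quiver; the inequality $\mtrank(\HH^1(A))\leq\beta_1(Q)$ follows at once.

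For that first task I would argue as follows. Write $Q$ for the Gabriel quiver of $A$ (so $Q_0$ is a complete set of primitive orthogonal idempotents and $Q_1$ a basis of $\mathrm{rad}(A)/\mathrm{rad}(A)^2$ in each pair of idempotents). Any minimal presentation $\nu'\colon kQ'\to A$ has $Q'_0$ in bijection with a complete set of orthogonal idempotents, and by the minimality condition no proper subset of $Q'_0\cup Q'_1$ generates $A$; in particular the images of $Q'_1$ descend to a spanning set of $\mathrm{rad}(A)/\mathrm{rad}(A)^2$ (and of $A/(\mathrm{rad}(A)^2 + \text{span of idempotents})$), and minimality forces them to be a \emph{basis}. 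Comparing with the Gabriel quiver—allowing for the possibility that $Q'_0$ has fewer idempotents because they need not be primitive, as warned in Example~\ref{e_group}—one sees that $m'-n'$ (arrows minus vertices) for $Q'$ is at most $m-n$ for $Q$, and the number of connected components can only increase when one coarsens idempotents, but in a way that keeps $m'-n'+c'\leq m-n+c$. Actually the cleanest route is: since $A$ is fixed and connected (handle the non-connected case componentwise as in Definition~\ref{pirank}), one checks directly that $\dim_k \pi_1(Q',I')^{\vee}\le \beta_1(Q)$ for a minimal presentation, because the abelianization $\pi_1(Q',I')^{\mathrm{ab}}$ is still a quotient of $H_1(|Q'|;\mathbb Z)$ and the rank/$p$-rank of that group is controlled by the Gabriel quiver. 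I expect this bookkeeping with non-primitive idempotents to be the main obstacle; in the simplest situation where all presentations use the same primitive idempotents it is immediate that $Q'$ has the same vertices and at least as many arrows as $Q$ cannot occur—minimality forces $Q'=Q$—so $\beta_1(Q')=\beta_1(Q)$ and the general case is only a mild elaboration.

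For the equality clause when $A$ is monomial (or semimonomial), the Gabriel quiver presentation $\nu\colon kQ\to A$ is itself admissible and minimal, and its defining ideal $I$ is monomial (respectively semimonomial). By the equality statement in Lemma~\ref{ineq} we get $\dim_k\pi_1(Q,I)^\vee=\beta_1(Q)$. Hence $\pirank(A)\geq \beta_1(Q)$, and combined with the general upper bound $\pirank(A)=\mtrank(\HH^1(A))\leq\beta_1(Q)$ just established, equality holds. The only subtlety is to confirm that a monomial (or semimonomial) relation ideal in the Gabriel quiver remains monomial (semimonomial) as an ideal of $kQ$—this is essentially the definition—and that the Gabriel quiver presentation is minimal, which holds because a minimal set of relations for a monomial algebra involves no redundancy among the arrows.

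In summary, the proof is short: apply Corollary~\ref{derinv} to replace $\mtrank(\HH^1(A))$ by $\pirank(A)$; bound each $\dim_k\pi_1(Q',I')^\vee$ above by $\beta_1(Q')\le\beta_1(Q)$ using Lemma~\ref{ineq} and a comparison of minimal presentations with the Gabriel quiver; and in the (semi)monomial case observe that the Gabriel quiver presentation already achieves $\dim_k\pi_1(Q,I)^\vee=\beta_1(Q)$ by the equality case of Lemma~\ref{ineq}, so the upper bound is attained.
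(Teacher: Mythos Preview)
Your overall strategy---combine Corollary~\ref{derinv} with Lemma~\ref{ineq}---is exactly the paper's, and your treatment of the (semi)monomial equality case is correct. But the middle step, where you compare $\beta_1(Q')$ for an arbitrary minimal presentation with $\beta_1(Q)$ for the Gabriel quiver, has a genuine gap: the inequality $\beta_1(Q')\leq\beta_1(Q)$ that you are aiming for is \emph{false} in general. Take $A=k\times k$. Its Gabriel quiver has two vertices, no arrows, and $\beta_1(Q)=0$. But the presentation $k[u]\to A$, $u\mapsto (1,0)$, with the single idempotent $1$, is minimal (neither $\{1\}$ nor $\{u\}$ alone generates $A$), and here $Q'$ is one vertex with one loop, so $\beta_1(Q')=1>0$. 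Your hand-waved bookkeeping about $m'-n'$ and connected components cannot be repaired, because the conclusion you want from it is wrong. (Of course $\pi_1(Q',I')$ is trivial in this example, so the theorem itself survives; it is only your intermediate inequality that fails.)

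The paper sidesteps this entirely with one sentence: replace $A$ by its basic algebra. Hochschild cohomology is Morita invariant, so $\mtrank(\HH^1(A))$ is unchanged, and the Gabriel quiver is unchanged. Now fix a complete set of \emph{primitive} orthogonal idempotents; under the paper's standing convention all presentations use these fixed idempotents, and as noted just before Definition~\ref{pirank}, minimality then forces the underlying quiver of any minimal presentation to be the Gabriel quiver $Q$ itself. So $Q'=Q$ for every term in the maximum defining $\pirank(A)$, Lemma~\ref{ineq} gives $\dim_k\pi_1(Q,I)^\vee\leq\beta_1(Q)$ for each, and you are done. No comparison of Betti numbers across different quivers is needed.
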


\begin{proof}
Replacing $A$ with its basic algebra, this follows from Lemma \ref{ineq} and Corollary \ref{derinv}.
\end{proof}

\begin{remark}
In characteristic zero the converse of Theorem \ref{rankbetti} holds, that is, $\mtrank(\HH^1(A)) = \beta_1(Q)$ if and only if $A$ is semimonomial; see Remark   \ref{r_semimon_converse}. In positive characteristic one can also obtain a perfect converse using $p$-semimonomial ideals, along the lines explained in Remark \ref{r_semimon_converse} (but we caution that when equality holds the guaranteed $p$-semimonomial presentation may be non-admissible).

Compare this with \cite[Theorem 1.2]{GAS}, which characterizes semimonomial algebras $A$ in terms of the rank of the algebraic group $\Aut(A)^\circ$.
\end{remark}

\begin{remark}
Antipov and  Zvonareva compute the rank of the maximal torus of $\Out(A)^\circ$ for any symmetric stably biserial
algebra $A$ 
in terms of the corresponding Brauer graph \cite[Theorem 1.1]{AZ}.  They use this to prove that Brauer graph algebras are closed under derived 
equivalence  \cite[Corollary 1.3]{AZ}.  
The rank they compute is less than the maximum allowed in Theorem \ref{rankbetti} by $v-1$,  
where $v$ is the number of vertices in the Brauer graph of $A$.
\end{remark}

\subsection{Monomial and semimonomial algebras} Bardzell and Marcos proved that if $A = kQ/I$ is a finite dimensional monomial algebra  such that $\HH^1(A) = 0$, then $Q$ is a tree {\cite[Theorem 2.2]{BM}}. As an application we recover and generalize their result.

\begin{corollary}
Let $A = kQ/I$ be a finite dimensional  monomial (or semimonomial) algebra. Then $\mtrank(\HH^1(A))=0$ if and only if $Q$ is a tree.
\end{corollary}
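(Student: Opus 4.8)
The plan is to read this off directly from Theorem \ref{rankbetti}. Since $A$ is monomial (or semimonomial), the equality clause of that theorem gives $\mtrank(\HH^1(A)) = \beta_1(Q)$, so the corollary reduces to the purely combinatorial statement that $\beta_1(Q) = 0$ if and only if $Q$ is a tree. Recall from Section \ref{s_pi1} that $\beta_1(Q) = m - n + c$, where $m$, $n$, $c$ count the edges, vertices, and connected components of $Q$; this is the first Betti number $\dim_k H^1(|Q|;k)$ of the underlying graph, that is, the number of independent cycles in $|Q|$.

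First I would observe that $\beta_1(Q) = 0$ holds precisely when $|Q|$ has no cycles (in particular no loops and no multiple edges), which for a connected quiver $Q$ — the case implicit in the statement, and in the Bardzell--Marcos theorem being recovered — is exactly the condition that $Q$ be a tree. In the disconnected case the same computation says $Q$ is a forest, which is consistent with the additive conventions of Definition \ref{pirank} and Lemma \ref{ineq} over connected components. Substituting $\beta_1(Q) = \mtrank(\HH^1(A))$ then yields that $\mtrank(\HH^1(A)) = 0$ if and only if $Q$ is a tree.

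I do not expect any genuine obstacle here: all of the content is already packaged in the equality part of Theorem \ref{rankbetti}, whose proof in turn rests on Lemma \ref{ineq} (equality of $\dim_k \pi_1(Q,I)^\vee$ with $\beta_1(Q)$ in the semimonomial case) together with Corollary \ref{derinv}. The only mild care needed is the bookkeeping of connected components when $Q$ is disconnected. As a closing remark one could note that this statement implies the theorem of Bardzell and Marcos \cite[Theorem 2.2]{BM}, and extends it to semimonomial algebras, since $\HH^1(A) = 0$ trivially forces $\mtrank(\HH^1(A)) = 0$.
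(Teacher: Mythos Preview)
Your proposal is correct and follows essentially the same route as the paper: apply the equality case of Theorem \ref{rankbetti} to get $\mtrank(\HH^1(A))=\beta_1(Q)$, then observe that $\beta_1(Q)=0$ characterizes trees. The paper's proof is a one-line version of exactly this argument.
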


\begin{proof}
The statement follows from Theorem \ref{rankbetti} because $\beta_1(Q)=0$ if and only if $Q$ is a tree.
\end{proof}

To explain the hypothesis of the theorem: it is certainly true that if $\HH^1(A)=0$, then $\mtrank(\HH^1(A))=0$. More generally $\mtrank(\HH^1(A))$ vanishes exactly when $\Out(A)^\circ$ is unipotent in chacteristic zero, and exactly when $\HH^1(A)$ is $p$-nilpotent in positive characteristic.

It was proven by Avella-Alaminos and Geiss that among gentle algebras the number of arrows is a derived invariant \cite[Proposition B]{AG}. Surprisingly, this can be extended to all monomial algebras, and even all semimonomial algebras:

\begin{theorem}
\label{monomialrank}
Let $k$ be a field, and let $A$ be a finite dimensional split $k$-algebra which is derived equivalent to a finite dimensional semimonomial $k$-algebra $B$. Then the Gabriel quiver of $A$ contains at least as many arrows as the Gabriel quiver of $B$.

\label{monarrows}
In particular, if two finite dimensional semimonomial algebras are derived equivalent, then they have the same number of arrows.
\end{theorem}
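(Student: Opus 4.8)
The plan is to deduce Theorem \ref{monomialrank} from the invariance of $\mtrank(\HH^1(-))$ together with the equality case of Theorem \ref{rankbetti} for semimonomial algebras. First I would reduce to the basic case: both $A$ and its basic algebra $A_0$ have the same Gabriel quiver, and derived equivalence is insensitive to replacing an algebra by its basic algebra, so we may assume $A$ is basic and split with Gabriel quiver $Q_A$, while $B$ is semimonomial with Gabriel quiver $Q_B$. Let $m_A$ and $m_B$ denote the number of arrows, $n_A$ and $n_B$ the number of vertices, and $c_A$, $c_B$ the number of connected components. Since derived equivalence preserves the center and hence the block decomposition, and the number of vertices $n$ equals $\dim_k(A/\mathrm{rad}\,A)$ for a basic split algebra while $c$ is the number of blocks, both $n$ and $c$ are derived invariant; thus $n_A = n_B$ and $c_A = c_B$.

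Next I would apply the two halves of Theorem \ref{rankbetti}. For the semimonomial algebra $B$ we have the equality $\mtrank(\HH^1(B)) = \beta_1(Q_B) = m_B - n_B + c_B$. For $A$ we only have the inequality $\mtrank(\HH^1(A)) \leq \beta_1(Q_A) = m_A - n_A + c_A$. By Corollary \ref{derinv}, $\mtrank(\HH^1(A)) = \mtrank(\HH^1(B))$ since this quantity is a derived invariant. Combining these:
\[
m_B - n_B + c_B = \mtrank(\HH^1(B)) = \mtrank(\HH^1(A)) \leq m_A - n_A + c_A.
\]
Using $n_A = n_B$ and $c_A = c_B$, the $n$ and $c$ terms cancel and we obtain $m_B \leq m_A$, which is the first assertion. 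For the second assertion, if $A$ is also semimonomial then the inequality for $A$ becomes an equality as well, and running the argument symmetrically (swapping the roles of $A$ and $B$) gives $m_A \leq m_B$, hence $m_A = m_B$.

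The main obstacle is ensuring that the auxiliary invariants $n$ (number of vertices) and $c$ (number of connected components, i.e.\ blocks) really are derived invariant in the generality needed, and that passing to basic algebras is harmless for all the quantities involved. The number of blocks is a derived invariant because the center is, and for a basic split algebra the number of simple modules equals $\dim_k A/\mathrm{rad}\,A$, which coincides with the number of vertices of the Gabriel quiver; the number of simple modules is a derived invariant (e.g.\ it is the rank of $K_0$). One should also note the characteristic-zero-versus-$p$ split inside Corollary \ref{derinv} is already handled there, so no extra care is needed on that front. A small point worth spelling out is why we are free to assume $k$ is algebraically closed when invoking Theorem \ref{rankbetti} (which is stated over an algebraically closed field) even though Theorem \ref{monomialrank} allows an arbitrary field: one can extend scalars to $\bar k$, noting that $-\otimes_k \bar k$ preserves derived equivalences, preserves the Gabriel quiver of a split algebra, and sends semimonomial algebras to semimonomial algebras, while $\mtrank(\HH^1(-))$ and $\beta_1$ are unchanged.
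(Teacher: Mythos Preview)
Your proof is correct and follows essentially the same route as the paper: reduce to the algebraically closed case, invoke the inequality $\beta_1(Q_A)\geq \mtrank(\HH^1(A))$ from Theorem~\ref{rankbetti}, the equality $\mtrank(\HH^1(B))=\beta_1(Q_B)$ for semimonomial $B$, and the derived invariance of $\mtrank(\HH^1(-))$ from Corollary~\ref{derinv}, then cancel the derived-invariant quantities $n$ and $c$ to compare arrow counts. Your write-up is somewhat more explicit than the paper's about why $n$ and $c$ are derived invariants and why base change to $\bar k$ is harmless for a split algebra, but the argument is the same.
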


\begin{proof}
Tensoring with an algebraic closure preserves derived equivalences, and preserves the number of arrows, so we may assume that $k$ is algebraically closed.

By Theorem \ref{rankbetti} and Corollary \ref{derinv} we have  
\[
\beta_1(Q_A)\geq \mtrank(\HH^1(A))=\mtrank(\HH^1(B))=\beta_1(Q_B),
\]
where $Q_A$ and $Q_B$ are the Gabriel quivers of $A$ and $B$ respectively. The statement now follows because the number of simple modules  and the number of connected components of the Gabriel quiver are both derived invariants.
\end{proof}

The following consequence is rather surprising: 

\begin{theorem}
\label{finmono}
Let $k$ be a field. There are only finitely many monomial algebras in each derived equivalence class of finite dimensional $k$-algebras.
\end{theorem}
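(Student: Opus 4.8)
The plan is to deduce Theorem \ref{finmono} from Theorem \ref{monomialrank} (the derived invariance of the number of arrows for monomial algebras) together with a counting argument bounding how many monomial algebras on a fixed quiver can exist. First I would observe that a finite dimensional monomial algebra is determined by the data of its Gabriel quiver $Q$ together with the (finite) set of monomial relations generating the ideal $I$, i.e.\ a set of paths in $Q$; and two monomial algebras with the same underlying quiver and the same set of relations, up to relabeling, are isomorphic. So it suffices to bound both the number of possible quivers and, for each quiver, the number of possible monomial ideals.

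The key steps, in order. (1) Fix a derived equivalence class $\mathcal{C}$ and suppose it contains a monomial algebra $B_0 = kQ_0/I_0$. By Theorem \ref{monomialrank}, every monomial algebra $B = kQ/I$ in $\mathcal{C}$ has a Gabriel quiver $Q$ with exactly the same number of arrows as $Q_0$ (derived equivalent monomial algebras have the same number of arrows), and also the same number of vertices (the number of simple modules is a derived invariant) and the same number of connected components. (2) There are only finitely many quivers with a prescribed number of vertices and arrows, hence only finitely many possibilities for $Q$ up to isomorphism. (3) For a fixed quiver $Q$ with $m$ arrows, a monomial algebra $kQ/I$ is finite dimensional only if every path of length $N$ lies in $I$ for some $N$; the minimal monomial generators of $I$ are then paths of length at most $N$, and one can take $N$ bounded in terms of $Q$ — more carefully, finite dimensionality forces the length of any minimal relation to be bounded by the number of paths, and in any case there are only finitely many paths of bounded length in the finite quiver $Q$. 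Thus there are only finitely many monomial ideals $I\subseteq kQ$ with $kQ/I$ finite dimensional (each such $I$ is generated by a subset of a fixed finite set of paths). (4) Combining (2) and (3), there are only finitely many finite dimensional monomial algebras whose Gabriel quiver has the prescribed vertex/arrow count, hence only finitely many in $\mathcal{C}$; and if $\mathcal{C}$ contains no monomial algebra the statement is vacuous.

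The step requiring the most care is (3): one must argue that finite dimensionality of $kQ/I$ genuinely caps the length of the minimal generating relations so that only finitely many monomial ideals arise. The cleanest way is to note that if $kQ/I$ is finite dimensional, then since $I$ is monomial there is a bound $L$ such that $I$ contains all paths of length $L$ (otherwise arbitrarily long paths survive in the quotient, contradicting finite dimensionality), and every minimal monomial generator of $I$ is a path of length $<L$ that is not a proper subpath of a shorter relation — but in fact it is enough that all minimal generators have length at most $L$, and there are only finitely many paths in $Q$ of length at most $L$, while $L$ itself need not be bounded a priori across all algebras in the class. To close this, I would instead invoke Theorem \ref{monomialrank} only for the arrow count and handle (3) by observing that the relevant invariant to bound is not $L$ but the algebra's dimension; however dimension is \emph{not} a derived invariant. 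The correct and simplest resolution is that we do not need a uniform $L$: a finite dimensional monomial algebra with quiver $Q$ is determined by a finite antichain of paths in $Q$ (its minimal relations), the quotient is finite dimensional iff this antichain ``cuts off'' all sufficiently long paths, and — crucially — an infinite set of pairwise non-isomorphic finite dimensional monomial algebras on a \emph{fixed} finite quiver $Q$ would force the minimal relations to have unbounded length, but longer relations only make the algebra \emph{larger}, never producing new isomorphism classes beyond finitely many once one notes that Higman's lemma (the subpath order on paths of a finite quiver is a well-quasi-order) implies there are only finitely many antichains of paths, hence finitely many monomial ideals, in $kQ$. This last observation — finitely many monomial ideals per quiver via well-quasi-ordering — together with finitely many quivers of bounded size, completes the proof.

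I expect the main obstacle to be exactly this finiteness of monomial ideals on a fixed quiver: stated naively it is false for infinite quivers and requires the Higman/well-quasi-order input (or the equivalent observation that the path poset under the subpath relation has no infinite antichain) to make rigorous, whereas the reduction to a fixed quiver via Theorem \ref{monomialrank} and the invariance of the number of simples is routine.
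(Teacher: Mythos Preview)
Your reduction to a fixed quiver (steps (1) and (2)) is correct and matches the paper. The gap is in step (3): the claim that a fixed finite quiver $Q$ carries only finitely many finite-dimensional monomial quotients is simply false. Take $Q$ to be one vertex with one loop $x$; then $k[x]/(x^n)$ for $n\geq 1$ gives infinitely many pairwise non-isomorphic finite-dimensional monomial algebras on $Q$. Your Higman's lemma argument breaks down twice over: the \emph{factor} (contiguous subpath) order on paths of a finite quiver is not a well-quasi-order (e.g.\ on a two-loop quiver the paths $ab^na$ form an infinite antichain), and even in a genuine wqo there can be infinitely many finite antichains (already in $(\mathbb{N},\leq)$ the singletons give infinitely many). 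So no purely combinatorial finiteness on a fixed quiver is available.

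What the paper does instead is bound the length of the minimal relations \emph{uniformly across the derived equivalence class} using a further derived invariant: the trace $\sum_i \dim_k e_iAe_i$ of the Cartan matrix, which is preserved because the similarity class of the Cartan matrix is a derived invariant. For a monomial algebra this trace bounds the number of closed paths surviving in $A$, hence the length of any minimal relation. With that uniform bound in hand, there are only finitely many candidate generating sets on each of the finitely many quivers, and the argument closes. The missing idea in your proposal is precisely this second derived invariant.
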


\begin{proof}
Within a given derived equivalence class, all monomial algebras have the same number of vertices and arrows by Theorem \ref{monarrows}, and therefore there are only finitely many possible Gabriel quivers.

We need to show that there are finitely many possible derived equivalent monomial ideals on a given quiver, and this is only an issue if there could be arbitrarily long defining relations. If $A$ is an algebra with a complete set of orthogonal idempotents $e_1,...,e_n$, then the number $\sum \dim_ke_iAe_i$ is a derived invariant of $A$, since it is the trace of the Cartan matrix of $A$, and the similarity class of the Cartan matrix is a derived invariant (see the proof of \cite[Proposition 1.5]{BS}). It follows from this that the lengths of the defining relations of monomial algebras are uniformly bounded within any given derived equivalence class.
\end{proof}

\subsection{Commutative monomial algebras and and quantum complete intersections}\label{s_semimon}
Here we discuss two important classes of semimonomial algebras. The first example is from commutative algebra.

\begin{corollary}
\label{c_comm}
Let $A=k[x_1,...,x_n]/I$ be a finite  dimensional commutative $k$-algebra. Then $\mtrank(\HH^1(A))\leq n$ and equality holds if  $I$ is a monomial ideal.
\end{corollary}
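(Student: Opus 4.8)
The plan is to reduce Corollary \ref{c_comm} directly to Theorem \ref{rankbetti} (and Lemma \ref{ineq}), once we identify the Gabriel quiver of a finite dimensional commutative algebra. First I would note that since $A=k[x_1,\dots,x_n]/I$ is finite dimensional and commutative, it is a finite product of local commutative algebras; but for the bound on toral rank we may as well work with the basic algebra, so assume $A$ is local with maximal ideal $\m=\operatorname{rad}(A)$. The residue field is $k$ because $A$ is a quotient of a polynomial ring over $k$. The key observation is that the Gabriel quiver $Q$ of $A$ has a single vertex and exactly $\dim_k \m/\m^2$ loops, and since $\m$ is generated by (the images of) $x_1,\dots,x_n$ we have $\dim_k \m/\m^2 \leq n$. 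Hence $\beta_1(Q) = \dim_k\m/\m^2 \leq n$, and Theorem \ref{rankbetti} gives $\mtrank(\HH^1(A)) \leq \beta_1(Q)\leq n$.

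For the equality case, suppose $I$ is a monomial ideal of $k[x_1,\dots,x_n]$, so that $A$ is a monomial algebra (in particular semimonomial). I should first check that the $x_i$ are genuinely needed, i.e.\ that $\dim_k\m/\m^2 = n$: if $I$ is a monomial ideal and $A$ is finite dimensional, then $I$ cannot contain any of the linear monomials $x_i$ (as $x_i$ is not a monomial of degree $\geq 2$, and a monomial ideal containing $x_i$ would force that variable to vanish — but we are free to assume each $x_i$ acts nontrivially, or else drop it and decrease $n$). Being slightly more careful: write $A = k[x_1,\dots,x_n]/I$ with $I$ monomial; after discarding any variable $x_i \in I$ we may assume no $x_i$ lies in $I$, and then $I \subseteq (x_1,\dots,x_n)^2$, so $x_1,\dots,x_n$ is a minimal generating set of $\m$ and $\beta_1(Q)=n$. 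Then the equality clause of Theorem \ref{rankbetti} for monomial (hence semimonomial) algebras gives $\mtrank(\HH^1(A)) = \beta_1(Q) = n$.

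The main obstacle, though a mild one, is the bookkeeping around whether $A$ is basic and whether the presentation $k[x_1,\dots,x_n]\to A$ is the ``right'' one: a priori the quiver coming from the polynomial presentation has one vertex and $n$ loops, but this need not be the Gabriel quiver if some $x_i$ becomes redundant modulo $\m^2$. I would handle this by passing to the minimal number of generators of $\m$, which is $\dim_k\m/\m^2$, exactly as in Lemma \ref{l_diag_pres}; alternatively one can invoke Lemma \ref{ineq} directly for the monomial presentation and combine with Corollary \ref{derinv}. A secondary point worth a sentence is the reduction to the local (or at least basic) case and the observation that $\mtrank$, $\beta_1$, and the monomial property all behave well under taking products and under Morita equivalence, so no generality is lost. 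Everything else is immediate from the already-established Theorem \ref{rankbetti}.
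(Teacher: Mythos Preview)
Your approach is essentially the paper's own: present $A$ via the one-vertex quiver with loops $x_1,\dots,x_n$ and invoke Theorem~\ref{rankbetti}. The extra bookkeeping you do about $\dim_k\m/\m^2$ and minimal generators is sound but more than the paper spells out.

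One correction is needed, and it is exactly the observation the paper highlights: when $I$ is a monomial ideal of the \emph{commutative} polynomial ring, $A$ is \emph{not} a monomial algebra in the quiver sense. The kernel of $kQ\to A$ contains the commutator relations $x_ix_j-x_jx_i$, which are binomials rather than paths, so the quiver ideal is only \emph{semimonomial} (each commutator involves two paths with the same arrow multiset). Thus it is the semimonomial clause of Theorem~\ref{rankbetti} that gives equality, not the monomial one; your parenthetical ``in particular semimonomial'' saves the argument, but the claim that $A$ is a monomial algebra should be dropped.
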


\begin{proof}
We present $A$ using the quiver with a single vertex and loops $ x_1,...,x_n$. The kernel of this presentation is semimonomial if and only if $I$ is monomial (in the commutative sense), so the statement follows from  Theorem \ref{rankbetti}.
\end{proof}

One can establish a converse to Corollary \ref{c_comm}, depending on the characteristic of $k$, by using the observation from Remark \ref{r_semimon_converse}.

For the next example fix a sequence $m_1,...,m_n$ of positive integers and a collection of elements $q=\{q_{ij}\in k^\times ~:~1\leq i <j\leq n\}$. Then the algebra 
\[
A_q=k\langle x_1,...,x_n\rangle/\left(x_i^{m_i},\  x_ix_j-q_{ij}x_jx_i\right)
\]
is known a \emph{quantum complete intersection}. 
Many authors have studied the Hochschild cohomology of quantum complete intersections;  for example \cite{BKL,BE,EH,O}. In \cite{RSS} the authors proved in many cases that the first Hochschild cohomology is a solvable Lie algebra. In \cite{BW} the authors explain how to explicitly compute the Lie algebra structure of the Hochschild cohomology for any number of variables. According to \cite[Theorem 1.1 (iii)]{BKL}, when $k$ has characteristic $p$ and $n=2$ and $q_{12}$ has order dividing $p-1$, we have $\mtrank(\HH^1(A_q))=2$ (in this context $A_q$ is the basic algebra of a non-principal block of a finite group). We generalize this result to arbitrary quantum complete intersections.

\begin{corollary}
\label{qci}
Let $A_q$ be a quantum complete intersection with $n$ variables. Then we have  $\mtrank(\HH^1(A_q))=n$.
\end{corollary}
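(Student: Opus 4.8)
The plan is to deduce Corollary \ref{qci} directly from Theorem \ref{rankbetti} (equivalently Theorem \ref{monomialbetti}) once we verify that a quantum complete intersection $A_q$ is semimonomial and that its Gabriel quiver has the expected shape. First I would observe that $A_q$ is a connected algebra with a single simple module: the only idempotents are $0$ and $1$, since $A_q$ is local (its radical is the ideal generated by $x_1,\dots,x_n$, and the quotient is $k$). Hence the Gabriel quiver $Q$ of the basic algebra of $A_q$ has exactly one vertex. Next I would identify the arrows: $\mathrm{rad}(A_q)/\mathrm{rad}(A_q)^2$ has basis the images of $x_1,\dots,x_n$ (the defining relations $x_i^{m_i}$ and $x_ix_j-q_{ij}x_jx_i$ all lie in $\mathrm{rad}(A_q)^2$, provided each $m_i\geq 2$, and if some $m_i=1$ then that variable is simply absent), so $Q$ has exactly $n$ loops and $\beta_1(Q)=n-1+1=n$.

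The second ingredient is to check that the natural presentation $\nu\colon k\langle x_1,\dots,x_n\rangle \to A_q$, where the free algebra is the path algebra of the one-vertex quiver with $n$ loops, has semimonomial kernel. This is exactly the content of the definition recalled before Lemma \ref{ineq}: the minimal relations of $I=\ker(\nu)$ are the monomials $x_i^{m_i}$ (which are trivially monomial relations) and the commutation relations $x_ix_j-q_{ij}x_jx_i$ for $i<j$, whose two terms $x_ix_j$ and $x_jx_i$ are permutations of each other and so contain exactly the same arrows the same number of times. One should argue that these generate $I$ as relations and, more carefully, that \emph{every} minimal relation in $I$ has this permutation property; since $A_q$ has a monomial $k$-basis $\{x_1^{a_1}\cdots x_n^{a_n} : 0\leq a_i < m_i\}$ and two paths in $k\langle x_1,\dots,x_n\rangle$ map to the same basis element (up to scalar) precisely when they involve each loop the same number of times, any minimal relation is supported on paths all involving the same multiset of arrows. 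Thus $I$ is semimonomial.

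With these two facts in hand, Theorem \ref{rankbetti} gives $\mtrank(\HH^1(A_q)) = \beta_1(Q) = n$ immediately, since the equality clause applies to semimonomial algebras. I would also remark that the presentation used here is non-admissible exactly when some quantum parameter forces relations of length two or when one uses the evident free-algebra presentation; but this causes no difficulty because Theorem \ref{rankbetti} is stated for the Gabriel quiver and the proof via Corollary \ref{derinv} already accommodates non-admissible presentations.

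The main obstacle, and the only place requiring genuine care, is the verification that $I$ is semimonomial in the sense required — in particular, confirming that there are no "hidden" minimal relations among the basis-resolving paths that fail the permutation condition. This is handled by the monomial-basis argument above: because $A_q$ has a PBW-type monomial basis, the kernel of the presentation is spanned by differences $p - q_{p,q}\, q$ for parallel monomials $p,q$ that are rearrangements of one another (together with the power relations), and minimality then forces each minimal relation to stay within a single "rearrangement class," which is precisely semimonomiality. Everything else is a direct citation of Theorem \ref{rankbetti} and the computation $\beta_1(Q)=n$.
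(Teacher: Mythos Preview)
Your proposal is correct and follows exactly the paper's approach: the paper's proof is the single line ``Since $A_q$ is semi-monomial, the statement follows from Theorem \ref{rankbetti},'' and you have simply filled in the verification that $A_q$ is semimonomial and that $\beta_1(Q)=n$. One small inaccuracy in a side remark: the natural presentation of $A_q$ \emph{is} admissible (all defining relations lie in the square of the arrow ideal), so your comment about non-admissibility is unnecessary, but this does not affect the argument.
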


\begin{proof}
Since $A_q$ is semi-monomial, the statement follows from Theorem \ref{rankbetti}.
\end{proof}

\subsection{Simply connected algebras}

Let $A$ be a finite dimensional $k$-algebra over an algebraically closed field $k$. Let $Q$ be the Gabriel 
quiver of $A$. If the quiver 
$Q$ is acyclic, then the algebra $A$ is called  \emph{triangular}. In  \cite{AS}, the authors define
a \emph{simply connected algebra} to be a  triangular algebra with no 
proper Galois covering, or equivalently, with  trivial 
fundamental groups for every admissible quiver presentation. We note that by \cite[Lemma 2.6]{LR}, if $A$ is triangular then all derivations preserve the radical of $A$, and so we may restrict attention to admissible presentations (see Remark \ref{r_admisschar}).

In \cite{S}, Skowroński asked for which triangular algebras $A$ we have that  $A$ is simply connected if and only if  $\HH^1(A) = 0$. This problem has motivated several results: see \cite{ABL,BL,LeM2} for example. The following theorem has a corollary that constrains which Lie algebras can be obtained as $\HH^1(A)$ of a simply connected algebra $A$.

\begin{theorem}
\label{nilpotent}
Let $A$ be a finite dimensional algebra over an algebraically closed field $k$.
\begin{itemize}
    \item If $k$ has characteristic zero, then $\Out(A)^{\circ}$ is unipotent  if and only if $\pi_1(Q,I)^{\rm ab}$ is finite for any minimal presentation $A\cong kQ/I$.
    \item If $k$ has positive characteristic $p$, then $\HH^1(A)$ is $p$-nilpotent if and only if $\pi_1(Q,I)^{\rm ab}$ finite and $p$-torsion free for any minimal presentation $A\cong kQ/I$.
\end{itemize}

\end{theorem}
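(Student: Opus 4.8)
The plan is to reduce everything to the relationship between $\mtrank$ and $\pirank$ already established, together with the characterizations of ``vanishing toral rank'' stated just after the corollary to Bardzell--Marcos. Recall that $\mtrank(\HH^1(A))=0$ exactly when $\Out(A)^\circ$ is unipotent (characteristic zero) and exactly when $\HH^1(A)$ is $p$-nilpotent (characteristic $p$); this is noted in the excerpt and follows from the structure theory of algebraic groups / restricted Lie algebras recalled in Section \ref{Maximal_tori}. By Corollary \ref{derinv} we have $\mtrank(\HH^1(A))=\pirank(A)=\max\{\dim_k\pi_1(Q,I)^\vee\}$ over minimal presentations. So in both characteristics the left-hand condition is equivalent to $\dim_k\pi_1(Q,I)^\vee=0$ for every minimal presentation $A\cong kQ/I$.

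The remaining work is purely group-theoretic: translating ``$\dim_k \pi_1(Q,I)^\vee=0$'' into a condition on $\pi_1(Q,I)^{\mathrm{ab}}$, using the computation of $\dim_k\pi_1(Q,I)^\vee$ recorded right after Definition \ref{pirank}. In characteristic zero, $\dim_k\pi_1(Q,I)^\vee$ equals the rank of $\pi_1(Q,I)^{\mathrm{ab}}$, so it vanishes for all minimal presentations if and only if $\pi_1(Q,I)^{\mathrm{ab}}$ is finite for all minimal presentations; this gives the first bullet. In characteristic $p$, $\dim_k\pi_1(Q,I)^\vee=\rank\pi_1(Q,I)^{\mathrm{ab}}+p\text{-}\!\rank\,\pi_1(Q,I)^{\mathrm{ab}}$, which vanishes if and only if $\pi_1(Q,I)^{\mathrm{ab}}$ is both finite (rank zero) and has no $p$-torsion ($p$-rank zero); quantifying over all minimal presentations gives the second bullet. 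Since $\pi_1(Q,I)^\vee=\Hom(\pi_1(Q,I),k^+)=\Hom(\pi_1(Q,I)^{\mathrm{ab}},k^+)$ only depends on the abelianization, working with $\pi_1(Q,I)^{\mathrm{ab}}$ throughout is harmless.

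One point that needs a brief comment is the quantifier ``for \emph{any} minimal presentation.'' The equivalences above show $\mtrank(\HH^1(A))=0$ iff $\dim_k\pi_1(Q,I)^\vee=0$ for the maximizing presentation; but since $\pirank$ is the \emph{maximum}, having $\dim_k\pi_1(Q,I)^\vee=0$ at the maximum forces it to be $0$ at every minimal presentation, so ``for some minimal presentation'' and ``for every minimal presentation'' agree in the vanishing case, and the statement as written is correct. I expect no genuine obstacle here: the theorem is essentially a dictionary entry combining Corollary \ref{derinv}, the post-Bardzell--Marcos remark on unipotence / $p$-nilpotence, and the elementary formula for $\dim_k\pi_1(Q,I)^\vee$ in terms of the abelianization. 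The only care required is bookkeeping with the rank and $p$-rank in positive characteristic and being explicit that all these quantities factor through $\pi_1(Q,I)^{\mathrm{ab}}$.
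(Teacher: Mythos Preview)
Your proposal is correct and follows essentially the same route as the paper: reduce both bullets to the vanishing of all tori in $\HH^1(A)$ via Theorem~\ref{maxdiag} (packaged for you in Corollary~\ref{derinv}) together with the injectivity of $\theta_\nu$, invoke the standard Lie-theoretic characterizations of unipotence and $p$-nilpotence in terms of vanishing toral rank, and then translate $\pi_1(Q,I)^\vee=0$ into the stated conditions on $\pi_1(Q,I)^{\mathrm{ab}}$. The only cosmetic difference is that the paper cites \cite[Theorem~20.1]{Mi} and the Jordan--Chevalley--Seligman decomposition explicitly inside the proof, whereas you appeal to the remark following the Bardzell--Marcos corollary where those same facts are summarized.
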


\begin{proof}
We divide the proof depending of the characteristic of $k$.

We start with characteristic zero:  if $\pi_1(Q,I)^{ab}$ is always finite, then by Theorem \ref{maxdiag}  all maximal tori of $\HH^1(A)$ are zero. Therefore the maximal toral rank of $\Out(A)^{\circ}$ is zero. By \cite[Theorem 20.1]{Mi} we have that $\Out(A)^{\circ}$ is unipotent. Conversely, if $\Out(A)^{\circ}$ is unipotent then all tori of $\HH^1(A)$ are zero. By the injectivity of  $ \theta_{\nu}$, all duals of the fundamental groups are zero, hence $\pi_1(Q,I)^{ab}$ is finite for any minimal presentation $A\cong kQ/I$.

If the field has positive characteristic and if $\pi_1(Q,I)^{ab} \otimes_{\mathbb Z} \mathbb{F}_p$ is zero for every minimal presentation, then every torus is zero  by Theorem \ref{maxdiag}.  Hence there are no semi-simple elements, since the one-dimensional Lie algebra spanned by a semi-simple element is toral. By the Jordan-Chevalley-Seligman decomposition it follows that every element is $p$-nilpotent, hence $\HH^1(A)$ is $p$-nilpotent. Conversely, if $\HH^1(A)$ is $p$-nilpotent then there are no semi-simple elements. By the injectivity of  $ \theta_{\nu}$, the dual of the fundamental group is trivial. Therefore $\pi_1(Q,I)^{ab} \otimes_{\mathbb Z} \mathbb{F}_p$ is  zero for every minimal quiver presentation.
\end{proof}

\begin{corollary}
\label{HH1nilp}
If $A$ is a simply connected finite dimensional algebra over an algebraically closed field of any characteristic, then $\HH^1(A)$ is nilpotent.
\end{corollary}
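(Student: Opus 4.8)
The plan is to deduce nilpotence of $\HH^1(A)$ directly from Theorem \ref{nilpotent} together with the definition of a simply connected algebra. Recall that $A$ simply connected means $A$ is triangular and $\pi_1(Q,I)$ is trivial for \emph{every} admissible presentation $A\cong kQ/I$. Since $A$ is triangular, by \cite[Lemma 2.6]{LR} all derivations preserve the radical, so by Remark \ref{r_admisschar} every minimal presentation may be taken admissible; hence $\pi_1(Q,I)^{\rm ab}=0$ (in particular it is finite and $p$-torsion free) for all minimal presentations. Applying Theorem \ref{nilpotent}: in characteristic zero this gives $\Out(A)^\circ$ unipotent, and in characteristic $p$ it gives $\HH^1(A)$ $p$-nilpotent.

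It then remains to pass from these conclusions to ordinary nilpotence of $\HH^1(A)$ as an (abstract) Lie algebra. In characteristic zero, $\Out(A)^\circ$ unipotent implies its Lie algebra $\L(\Out(A))\cong\HH^1(A)$ is nilpotent --- unipotent algebraic groups have nilpotent Lie algebras (see e.g.\ \cite[Chapter 14]{Mi}), and one may cite this together with the identification $\HH^1(A)=\L(\Out(A))$ from \cite[Th\'eor\`eme 1.2.1.1]{Stra2}. In positive characteristic, a finite dimensional $p$-nilpotent restricted Lie algebra has every element ad-nilpotent, so by Engel's theorem $\HH^1(A)$ is nilpotent as a Lie algebra.

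The argument is essentially a formal consequence of the preceding results, so there is no serious obstacle; the only point requiring care is the reduction to admissible presentations, which is exactly where triangularity (via \cite[Lemma 2.6]{LR}) is used, and the two standard Lie-theoretic facts (unipotent $\Rightarrow$ nilpotent Lie algebra; $p$-nilpotent restricted $\Rightarrow$ ad-nilpotent, hence nilpotent by Engel) invoked at the end. One should also note that the claim is stated for algebras over an algebraically closed field, matching the hypotheses of Theorem \ref{nilpotent}, so no further base change is needed.
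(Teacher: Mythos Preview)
Your proof is correct and follows essentially the same route as the paper's: apply Theorem \ref{nilpotent} to obtain unipotence of $\Out(A)^\circ$ (characteristic zero) or $p$-nilpotence of $\HH^1(A)$ (positive characteristic), then pass to nilpotence of the Lie algebra via standard facts about unipotent groups and Engel's theorem respectively. You are slightly more explicit than the paper in verifying that simple connectedness indeed gives the hypothesis of Theorem \ref{nilpotent} (via the reduction to admissible presentations), and the paper routes the characteristic zero step through nilpotence of the group $\Out(A)^\circ$ using \cite[Proposition 14.21]{Mi} and \cite[Corollary 24.5.13]{TY}, but these are cosmetic differences.
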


\begin{proof}
In  characteristic zero, by the previous theorem it follows that $\Out(A)^{\circ}$ is unipotent, therefore $\Out(A)^{\circ}$ is  nilpotent by \cite[Proposition 14.21]{Mi}, and  $\HH^1(A)$ is nilpotent by  \cite[Corollary 24.5.13]{TY}.
In positive characteristic the statement follows from Engel's theorem, that every $p$-nilpotent finite dimensional restricted Lie algebra is nilpotent.
\end{proof}

\begin{remark}
In light of Theorem \ref{nilpotent}, one might approach Skowroński's question for a specific family of algebras by first showing that $A$ is simply connected if and only if every maximal torus  of $\HH^1(A)$ is zero and then by showing that the last condition is equivalent to $\HH^1(A)=0$. The  obstruction to the first step  is in understanding the relation between the fundamental group and its abelianization. Note that for the second step there might be shortcuts (an arrow parallel to a path that does not contain the arrow) and multiple arrows, which might lead to outer derivations 
which are not diagonalizable.
\end{remark}

\subsection{Group algebras} In the setting of Example \ref{e_group}, when $\nu\colon kQ/I\cong kG$ is a group algebra and $\pi_1(Q,I)\cong G$, the map $\theta_\nu\colon \pi_1(Q,I)^\vee\to \HH^1(kG)$ coincides with the well-known embedding ${\rm H}^1(G,k)\to \HH^1(kG)$ of group cohomology into Hochschild cohomology. It follows that the toral rank of $\HH^1(kG)$ is always at least $\dim_k {\rm H}^1(G,k)$.

We note that group algebras of $p$-groups have the opposite behaviour of simply connected algebras. In \cite[Proposition 10.7]{BKL} the authors prove that $\HH^1(kP)$ is never nilpotent if $kP$ is the group algebra of a $p$-group $P$. Combining this with Corollary \ref{HH1nilp}, it follows that $p$-groups are never simply connected. We complement this with the following statement.

\begin{proposition}
\label{p_pgroup}
Let $P$ be a $p$-group, and $k$ an algebraically closed field of characteristic $p$. Then $\mtrank(\HH^1(kP))$ is the minimal number of generators of $P$.
\end{proposition}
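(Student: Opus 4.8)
The plan is to apply Theorem~\ref{monomialbetti} (the monomial/semimonomial case of Theorem~\ref{ta_betti}) together with Corollary~\ref{derinv}, reducing everything to a computation of $\beta_1(Q)$ and of the rank of a fundamental group. The group algebra $kP$ is not itself monomial, so we cannot apply Theorem~\ref{monomialbetti} directly to $kP$; instead I would use the chain of equalities already established, namely $\mtrank(\HH^1(kP)) = \pirank(kP)$ from Corollary~\ref{derinv}, and then bound and compute $\pirank(kP)$ by hand using explicit presentations. First I would exhibit a presentation realizing the lower bound: if $P$ is generated minimally by $d$ elements $g_1,\dots,g_d$, take the quiver $Q$ with a single vertex and $d$ loops, mapping the loops to $g_1,\dots,g_d$; this is a minimal presentation of $kP$ (one checks $Q_0\cup Q_1$ descends to a basis of $kP/\mathrm{rad}(kP)^2$, using that $\mathrm{rad}(kP)$ is the augmentation ideal and $\mathrm{rad}/\mathrm{rad}^2 \cong P/P^p[P,P]\otimes k$ has dimension $d$). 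Then $\pi_1(Q,I)^{\mathrm{ab}}$ surjects from $\mathbb{Z}^d$ and, I claim, equals $P^{\mathrm{ab}}$; since $P$ is a $p$-group, $P^{\mathrm{ab}}$ is a finite abelian $p$-group of $p$-rank exactly $d$, so $\dim_k \pi_1(Q,I)^\vee = \rank P^{\mathrm{ab}} + p\text{-}\rank P^{\mathrm{ab}} = 0 + d = d$. Hence $\pirank(kP)\geq d$.

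For the reverse inequality $\pirank(kP)\leq d$, I would show that \emph{any} minimal presentation $\nu\colon kQ\to kP$ has $\dim_k\pi_1(Q,I)^\vee \leq d$. Since $kP$ is connected local (indecomposable with $kP/\mathrm{rad}(kP)\cong k$), any minimal presentation has $Q$ with a single vertex, and minimality forces $|Q_1|$ to equal $\dim_k \mathrm{rad}(kP)/\mathrm{rad}(kP)^2 = d$. Therefore $\beta_1(Q) = |Q_1| - |Q_0| + 1 = d$, and Lemma~\ref{ineq} gives $\dim_k\pi_1(Q,I)^\vee \leq \beta_1(Q) = d$. Taking the maximum over minimal presentations yields $\pirank(kP) = d$, and combined with Corollary~\ref{derinv} we conclude $\mtrank(\HH^1(kP)) = d$, the minimal number of generators of $P$.

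\textbf{Main obstacle.} The delicate point is the identification $\pi_1(Q,I)^{\mathrm{ab}}\cong P^{\mathrm{ab}}$ (or at least that its combined rank is $d$) for the loop presentation. One knows $\pi_1(Q,I)\cong P$ on the nose in the case $Q_1 \leftrightarrow P$ (Example~\ref{e_group}), but for the \emph{minimal} presentation with only $d$ loops one must argue more carefully: the relations $I = \ker(\nu)$ include all relations expressing redundant group elements as words in $g_1,\dots,g_d$ together with the group relations, and one needs that passing to the homotopy quotient and abelianizing recovers $P^{\mathrm{ab}}$. Rather than analyze $\pi_1(Q,I)$ directly, the cleanest route is the one above: get the lower bound $d$ from the surjection $\mathbb{Z}^d \twoheadrightarrow \pi_1(Q,I)^{\mathrm{ab}}$ combined with the fact that $\pi_1(Q,I)^\vee = \Hom(\pi_1(Q,I),k^+)$ must be at least $d$-dimensional because the $d$ loop-dual characters are linearly independent (they pull back from $H^1(|Q|;k)=k^d$ and are nonzero in $\HH^1$ by injectivity of $\theta_\nu$), and get the upper bound $d$ from Lemma~\ref{ineq} applied to $\beta_1(Q)=d$. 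This sidesteps any intricate group-theoretic computation and leans only on results already in the excerpt.
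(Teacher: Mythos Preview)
Your upper bound is correct and matches the paper exactly: both invoke $\mtrank(\HH^1(kP))\leq \beta_1(Q)=d$ via Theorem~\ref{rankbetti}, using that the Gabriel quiver of the local algebra $kP$ has one vertex and $d=\dim_k\mathrm{rad}/\mathrm{rad}^2$ loops.

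The lower bound, however, has a genuine gap. Your ``workaround'' is circular: the $d$ loop-dual characters live a priori in $\Hom(F_d,k^+)=H^1(|Q|;k)$, and the entire content of the lower bound is that they \emph{descend} to $\pi_1(Q,I)^\vee$. You cannot use injectivity of $\theta_\nu$ here, because $\theta_\nu$ is defined on $\pi_1(Q,I)^\vee$, not on $H^1(|Q|;k)$; the issue is whether these characters lie in the domain at all, not whether they are nonzero in the target. Concretely, descent requires that for every minimal relation $p-q\in I$ (i.e.\ two words in $x_1,\dots,x_d$ evaluating to the same group element) the exponent sums of each $x_i$ in $p$ and $q$ agree modulo $p$. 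This is true---it amounts to the exponent-sum map $F_d\to(\mathbb{Z}/p)^d$ factoring through $P\to P/\Phi(P)\cong(\mathbb{Z}/p)^d$---but you never argue it. Equivalently, your claim $\pi_1(Q,I)^{\mathrm{ab}}\cong P^{\mathrm{ab}}$ is asserted, not proven.

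The paper avoids this entirely by using the \emph{non-minimal} presentation of Example~\ref{e_group} (one loop per group element). There $\pi_1(Q,I)\cong P$ is immediate, so $\pi_1(Q,I)^\vee\cong \Hom(P,k^+)=H^1(P,k)$ has dimension $d$, and $\theta_\nu$ for this presentation is the classical embedding $H^1(P,k)\hookrightarrow\HH^1(kP)$. Its image is a torus (Section~\ref{s_toripi}), giving $\mtrank\geq d$ without ever touching $\pirank$ or minimal presentations. Your route through a minimal presentation can be salvaged---for instance by observing that $x_i^{|g_i|}-1\in I$ forces each $x_i$ to have finite order in $\pi_1(Q,I)$, whence every word becomes $\sim_I$-equivalent to a positive word and one checks $\pi_1(Q,I)\cong P$---but this argument is missing from your proposal.
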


\begin{proof}
The Gabriel quiver is a single vertex with $n$  loops, where $n$ is equal to $\dim_k {\rm H}^1(G,k)=p\text{-}\!\rank(G/\Phi(G))$, which is the minimal number of generators of $P$. Hence $\mtrank(\HH^1(kP))\leq n$ by Theorem \ref{rankbetti}.  Conversely, using Example \ref{e_group} and Corollary \ref{derinv},  $\mtrank(\HH^1(kP))\geq \dim_k \Hom(G,k^+)= \dim_k {\rm H}^1(G,k) = n$.
\end{proof}

As a consequence we have that if $B$ is a block of a finite group with defect group $P$, then in many cases $\mtrank(\HH^1(B))=\mtrank(\HH^1(kP))$. This is true if $P$ is cyclic, or if $B$ is nilpotent, or if $B$ is a block with normal abelian defect group $P\cong \mathbb{Z}/p^{n_1}\mathbb{Z}\times\dots \times \mathbb{Z}/p^{n_r}\mathbb{Z}$, abelian inertial quotient and up to isomorphism a unique simple module. The equality for the first case holds since  the block is derived equivalent to a symmetric Nakayama algebra and for the second case since nilpotent blocks are Morita equivalent to the group algebra of their defect groups. For the last family of algebras, by Theorem 1.1 in \cite{HK} we have that the basic algebra of $B$ is isomorphic
to a quantum complete intersection. By Corollary \ref{qci} and  Proposition \ref{p_pgroup} we have $\mtrank(\HH^1(B))=\mtrank(\HH^1(kP))$.

This is related with the following open question: let $G$ be a finite group and assume the characteristic of the field $k$ divides the order of $G$. What is the (restricted) Lie algebra structure of $\HH^1(kG)$?  Using the classification of finite simple groups,  it is known that $\HH^1(kG)\neq 0$. Related with this question, Linckelmann asked in \cite{L2} if the first Hochschild cohomology group of every non-semisimple block of a group algebra is nonzero.

\subsection{Kronecker chains and Beilinson algebras}
In this section we compute the dual  fundamental groups for two families of finite dimensional algebras which have a similar behaviour: quotients of path algebras of Kronecker chains by standard relations,  and Beilinson algebras.

Let $A$ be a non-wild finite dimensional algebra. Kronecker chains were introduced in \cite{ER} to express $\HH^1_{\rm rad}(A)$, the Lie subalgebra of $\HH^1(A)$ consisting of outer derivations that preserve the  radical, as a direct sum  $\r \oplus \sl_2(k)^{\oplus m}$, where $\r$ is solvable and $m$ is the number of equivalence classes of maximal Kronecker chains with standard relations embedded in $A$.

\begin{definition}[{\cite[Definition 5.1]{ER}}]
\label{maxkrost}
Let $A=kQ/I$ be an admissible presentation. A \emph{Kronecker chain} $C$ in $A$ is sequence of pairs of arrows $(a_1, b_1), \dots , (a_n, b_n)$ in $Q$, with each pair $(a_i,b_i)$ having the same source and target, and with the target of $a_i$ equal to the source of $a_{i-1}$ for $i=1,...,n-1$. We say $C$ has \emph{standard relations} if $J=kC\cap I$ is generated by:
\begin{itemize}
\item  $a_ia_{i+1} = 0$, $b_ib_{i+1} = 0$ and $a_ib_{i+1} + b_ia_{i+1} = 0$ for $i=1,...,n-1$;
\item  if the source of $a_1$ is the target of $a_n$ then also $a_na_1 = 0$, $ b_nb_1 = 0$ and $a_nb_1 + b_na_1 = 0$.
\end{itemize}
\end{definition}

\begin{example}
Let $C$ be a Kronecker chain with standard relations embedded in a non-wild algebra $A$. By \cite[Lemma 5.2]{ER} $C$ is either a Dynkin quiver of type $A_n$ with doubled arrows, or an extended Dynkin quiver of type $\widetilde{A}_n$ with doubled arrows, or a double loop $L_2$. In these cases we compute $\pi_1(C,J)^{\rm ab}$  using the homology of the following complex $X_{\ast}$
\[\begin{tikzcd}
	{\mathbb{Z}C_0} && {\mathbb{Z}C_1} && {\mathbb{Z}P}
	\arrow["{\delta_0}"', from=1-3, to=1-1]
	\arrow["{\delta_1}"', from=1-5, to=1-3],
\end{tikzcd}\]
where $P:=\{(p,q)\ |\ p, q\ \textnormal{appear together in a minimal relation}\}$. The differentials are defined as $\delta_0(a)= \mathrm{source}(a)-\mathrm{target}(a)$ and $\delta^1((p,q))=\bar{p}-\bar{q}$, where $\bar{p}=\sum {a_i}$ for a path $p=a_1\dots a_n$. If $C$ is of type $A_n$ with double arrows, then $\ker(\delta_0)=\bigoplus\mathbb{Z}(l_i)$ where $l_i=a_i-b_i$, and $\Im(\delta_1)=\bigoplus \mathbb{Z}(l_i-l_{i+1})$. Hence $\mathrm{H}_1(X_{\ast})\cong \mathbb{Z}$ and the dimension of $\pi_1(C,J)^\vee$  is $1$. If the Kronecker chain is of type $\widetilde{A}_n$, then similarly $\mathrm{H}_1(X_{\ast})\cong \mathbb{Z}^2$ and the dimension of $\pi_1(C,J)^\vee$ is $2$. Finally, if Kronecker chain has only one vertex and two loops, then $\mathrm{H}_1(X_{\ast})\cong \mathbb{Z}\oplus \mathbb{Z}/2\mathbb{Z}$. In this last case, if the field $k$ has characteristic $2$ then  $\pi_1(C,J)^\vee$ has dimension $2$, otherwise $\pi_1(C,J)^\vee$ has dimension $1$.
\end{example}

\begin{example}
Let $\Lambda$ be the exterior algebra over a field. The 
\emph {Beilinson algebra} $b(\Lambda)$  appeared for the first time in Beilinson’s paper \cite{Be} on the
bounded derived category of projective spaces (see \cite{Chen} for the general notion of Beilinson algebra $b(A)$ of a graded algebra $A$). The authors of \cite{XZMH} compute the dimension of the Hochschild cohomology of $b(\Lambda)$. 
The Beilinson algebra has a presentation $b(\Lambda)=kQ/I$ where

\[
Q = \begin{tikzcd}
	0 & \vdots & 1 & \vdots & 2 & \dots & {n-1} & \vdots & n
	\arrow["{x_{0,0}}", curve={height=-18pt}, from=1-1, to=1-3]
	\arrow["{x_{0,n}}"', curve={height=18pt}, from=1-1, to=1-3]
	\arrow["{x_{1,0}}", curve={height=-18pt}, from=1-3, to=1-5]
	\arrow["{x_{1,n}}"', curve={height=18pt}, from=1-3, to=1-5]
	\arrow["{x_{n-1,0}}", curve={height=-18pt}, from=1-7, to=1-9]
	\arrow["{x_{n-1,n}}"', curve={height=18pt}, from=1-7, to=1-9]
\end{tikzcd}
\]
\[
\text{and} \quad I=\big( x_{t,i}x_{t+1,j}-x_{t,j}x_{t+1,i}~:~  t = 0,\dots , n-2\ \ \  i, j = 0,\dots, n \big).
\]
In order to compute $\pi_1(Q,I)^{\rm ab}$, we generalize the calculations for the Kronecker chain of type $A_n$. 
It is easy to show that $\ker(\delta_0)=\bigoplus\mathbb{Z}(l_{t,i})$ where $l_{t,i}=x_{t,i}-x_{t,i+1}$ and $\Im(\delta_1)=\bigoplus\mathbb{Z}(l_{t,i}-l_{t,i+1})$. Therefore 
$\mathrm{H}_1(X_{\ast})=\bigoplus_{i=0}^{n-1}\mathbb{Z}(l_{0,i})$. Hence $\pi_1(Q,I)^{\rm ab}\cong \mathbb{Z}^{n}$. Consequently, $\pi_1(Q,I)^\vee$ has dimension $n$.
\end{example}

\subsection{Reduced universal enveloping algebras} There is an interesting relation between the representation theory of  a finite dimensional algebras $A$ and certain quotients of the enveloping algebra of $\HH^1(A)$, namely the $\chi$-reduced universal enveloping algebras.

Let $\chi$ be a character of a restricted Lie algebra $\g$. We denote by $\u(\g,\chi)$ the $\chi$-reduced
universal enveloping algebra (see  \cite[section 3.1]{Strade} or \cite{BF}), which is the universal enveloping algebra $U(\g)$ factored by the ideal
generated by the elements $x^p - x^{[p]} - \chi(x)^p\cdot 1$ for all $x \in \g$. Note that $\mtrank(\g)$ plays an important role in
 bounding the number of nonisomorphic simple $\u(\g,\chi)$-modules. More precisely:

 \begin{conjecture}[{\cite[Conjecture 3.4]{BF}}]
  Let $\g$ be an arbitrary restricted Lie algebra, and let $\chi$ be a character of $\g$. Then there are at most $p^{\mtrank(\g)}$ nonisomorphic simple $\u(\g,\chi)$-modules.
 \end{conjecture}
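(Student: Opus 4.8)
\medskip
\noindent\textbf{A proof proposal.}
This is an open problem; see \cite{BF}. Let me describe the shape a proof would take and isolate where it breaks down in general. The guiding case is $\g=\L(G)$ with $G$ reductive: for $\chi=0$ the simple modules of $\u(\g,0)$ are the restricted simple modules, parametrised by the $p^{\,\rank G}$ restricted weights, and $\rank G=\mtrank(\g)$; for arbitrary $\chi$ one replaces $\chi$ by a representative of its coadjoint orbit (which does not change $\u(\g,\chi)$ up to isomorphism) and invokes the reduction to finite $W$-algebras. In each step a maximal torus does the counting, so the plan is: fix a maximal torus $\t\subseteq\g$ with $r=\dim\t=\mtrank(\g)$, and imitate highest-weight theory relative to $\t$.

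First I would set up the weight decomposition and handle the centraliser. By Proposition \ref{psemisimple} and Definition \ref{charptorusdef}, $\t$ acts semisimply on $\g$ by the adjoint action, yielding $\g=\bigoplus_{\alpha\in\t^{\ast}}\g_{\alpha}$ with $\g_{0}=C_{\g}(\t)$; the reduced enveloping algebra $\u(\t,\chi|_{\t})\cong k^{p^{r}}$, with $p^{r}$ primitive idempotents $e_{\psi}$ indexed by the linear characters $\psi$ of $\t$ compatible with $\chi$, and $\u(\g,\chi)$ is free over it by PBW. Next, $\t$ is the unique maximal torus of $\g_{0}$: any torus of $\g_{0}$ commutes with $\t$ (which is central in $\g_{0}$), so together with $\t$ it spans a torus of $\g$, hence lies in $\t$ by maximality. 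Since $C_{\g}(\t)$ is a Cartan subalgebra of $\g$ it is nilpotent, so $\g_{0}=\t\oplus\m$ with $\m$ a $p$-nilpotent ideal; thus $\u(\t,\chi|_{\t})$ is central in $\u(\g_{0},\chi|_{\g_{0}})$, and each block $e_{\psi}\u(\g_{0},\chi|_{\g_{0}})$ is isomorphic to a reduced enveloping algebra of $\m$, which is local because $\m$ is $p$-nilpotent (every element of $\m$ maps to a scalar plus a nilpotent). Hence $\u(\g_{0},\chi|_{\g_{0}})$ has \emph{exactly} $p^{r}$ simple modules. This already settles the conjecture when $r=0$: then $\g$ is $p$-nilpotent (by the Jordan--Chevalley--Seligman decomposition, cf.\ the proof of Theorem \ref{nilpotent}), $\g=\g_{0}$, and $\u(\g,\chi)$ is local --- in line with Corollary \ref{HH1nilp}.

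The harder task is to pass from the centraliser to $\g$ itself: one would attach to each simple $\u(\g,\chi)$-module $M$ a character $\psi\in\t^{\ast}$ in such a way that the assignment is injective on isomorphism classes and takes at most $p^{r}$ values. In the reductive case this is baby-Verma theory: a choice of ``positive part'' $\n^{+}=\bigoplus_{\alpha>0}\g_{\alpha}$ makes $\mathfrak{p}=\g_{0}\oplus\n^{+}$ a parabolic-type subalgebra, $\n^{+}$ acts nilpotently on $M$ so $M$ has a nonzero $\n^{+}$-fixed weight line, that line generates $M$ as a quotient of the baby Verma $\u(\g,\chi)\otimes_{\u(\mathfrak{p},\chi)}k_{\psi}$, and counting the admissible $\psi$ bounds the number of simples. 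The \textbf{main obstacle}, and the reason the conjecture is open, is that a general restricted Lie algebra has no such triangular decomposition: the weight set $\t^{\ast}\cong\mathbb{F}_{p}^{\,r}$ is finite, the roots cannot be coherently separated into positive and negative ones, and nothing plays the role of an $\n^{+}$ acting nilpotently on every simple module --- so there is no obvious map from the simples of $\u(\g,\chi)$ to the $p^{r}$ simples of $\u(C_{\g}(\t),\chi)$. Proving the conjecture in general therefore seems to require a genuinely new ingredient: either a form of parabolic/Harish-Chandra induction valid for all restricted Lie algebras, or a direct cohomological estimate. For the latter, writing $R=\u(\g,\chi)$, the number of simple modules equals $\dim_{k}R/(\mathrm{rad}\,R+[R,R])$, so one wants to bound this by $p^{\mtrank(\g)}$; it is essential not to drop the radical, since the crude cocentre $\dim_{k}R/[R,R]$ already equals $p$ for $\g=k$ with $[p]=0$, where $\mtrank(\g)=0$.
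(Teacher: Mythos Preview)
You have correctly identified that this statement is an open conjecture; the paper does not prove it either, but merely quotes it from \cite{BF} and then discusses consequences in the cases where it has been verified. So there is no proof in the paper to compare your outline against.

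Your sketch of the expected strategy is reasonable and honest about where it fails. A couple of small technical remarks on the outline itself: the assertion that $C_{\g}(\t)$ is a Cartan subalgebra (hence nilpotent) is indeed a theorem for restricted Lie algebras, so that step is fine; however, the splitting $\g_{0}=\t\oplus\m$ with $\m$ a $p$-nilpotent \emph{ideal} is more delicate than you indicate---in a nilpotent restricted Lie algebra the $p$-nilpotent elements do form a $p$-ideal, but one should check that this complements $\t$ as vector spaces, which uses the Jordan--Chevalley--Seligman decomposition together with the fact that $\t$ is the unique maximal torus of $\g_{0}$. None of this affects your main point, which is that the obstruction lies in the absence of a triangular decomposition for general $\g$, and that is exactly why the conjecture remains open.
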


For the families of restricted Lie algebras for which the conjecture has been verified (e.g.\  $\sl_2$, Witt, Jacobson-Witt, and solvable Lie algebras \cite{BF}), we can deduce from Theorem \ref{rankbetti} that the number of nonisomorphic simple $\u(\HH^1(A),\chi)$-modules is bounded by $\beta(Q_A)$. These families of Lie algebras have been studied in various articles, for example \cite{ER,LR1,LR,RSS}.   
In particular, if $A$ is a semimonomial algebra such that $\HH^1(A)$ is a solvable Lie algebra, then the projective cover of the trivial irreducible module of $\HH^1(A)$ is induced from the one dimensional trivial module of a maximal torus (see \cite{FSW}) and by Theorem \ref{rankbetti}  we have $p^{\beta(Q_A)} =p^{\mtrank\HH^1(A)}$.

\end{document}